\setlist[itemize]{leftmargin=2em}
\numberwithin{equation}{section}
\crefname{equation}{Eq.\!\!}{Eq.\!\!}
\crefname{definition}{D.\!\!}{D.\!\!}
\crefname{lemma}{L.\!\!}{L.\!\!}
\crefname{proposition}{P.\!\!}{P.\!\!}
\crefname{theorem}{T.\!\!}{T.\!\!}
\crefname{corollary}{C.\!\!}{C.\!\!}
\crefname{example}{E.\!\!}{E.\!\!}
\crefname{remark}{R.\!\!}{R.\!\!}
\newtheorem{definition}{Definition}[section]
\newtheorem{lemma}[definition]{Lemma}
\newtheorem{theorem}[definition]{Theorem}
\newtheorem{corollary}[definition]{Corollary}
\newtheorem{remark}[definition]{Remark}
\newcommand{\ifNotEmpty}[2]{\ifthenelse{\equal{#1}{}}{}{#2}}
\renewcommand{\phi}{\varphi}
\renewcommand{\ell}{l}
\renewcommand{\epsilon}{\varepsilon}
\renewcommand{\*}{\hspace{-0.15em}\cdot\hspace{-0.15em}}
\DeclareMathOperator*{\bigcupDot}{\dot{\bigcup}}
\newcommand{\cleq}{\lesssim}
\newcommand{\cgeq}{\gtrsim}
\newcommand{\ceq}{\eqsim}
\renewcommand{\(}{\bigg(}
\renewcommand{\)}{\bigg)}
\newcommand{\set}[1]{\{#1\}}
\newcommand{\Set}[2]{\{#1\,|\,#2\}}
\newcommand{\Pow}[1]{\mathrm{Pow}(#1)}
\newcommand{\N}{\mathbb{N}}
\newcommand{\R}{\mathbb{R}}
\newcommand{\Ball}[3][]{\mathrm{Ball}_{#1}(#2,#3)}
\newcommand{\closureN}[1]{\overline{#1}}
\newcommand{\boundaryN}[1]{\oldpartial #1}
\newcommand{\inflateN}[3][]{{#2}^{#3}_{#1}}
\newcommand{\inflate}[3][]{(#2)^{#3}_{#1}}
\newcommand{\cardN}[1]{\# #1}
\newcommand{\meas}[2][]{|#2|_{#1}}
\newcommand{\diam}[2][]{\mathrm{diam}_{#1}(#2)}
\newcommand{\dist}[3][]{\mathrm{dist}_{#1}(#2,#3)}
\newcommand{\supp}[2][]{\mathrm{supp}_{#1}(#2)}
\newcommand{\suppB}[2][]{\mathrm{supp}_{#1}\bigg(#2\bigg)}
\newcommand{\restrictN}[2]{#1|_{#2}}
\newcommand{\restrict}[2]{(#1)|_{#2}}
\newcommand{\mean}[2][]{\langle#2\rangle_{#1}}
\newcommand{\fDef}[3]{#1: #2 \longrightarrow #3}
\newcommand{\fDefB}[5][]{
\ifNotEmpty{#1}{#1:}
\left\{
\begin{array}{ccc}
#2 & \longrightarrow & #3 \\
#4 & \longmapsto & #5
\end{array}
\right.
}
\newcommand{\floor}[1]{\lfloor #1 \rfloor}
\newcommand{\identity}{\mathrm{id}}
\newcommand{\kronecker}[1]{\delta_{#1}}
\newcommand{\Landau}[1]{\mathcal{O}(#1)}
\let\oldpartial\partial
\newcommand{\partialN}[3][]{\oldpartial_{#2}^{\ifNotEmpty{#1}{(#1)}} #3}
\newcommand{\DN}[2]{\mathrm{D}^{#1} #2}
\newcommand{\gradN}[2][]{\nabla_{#1} #2}
\newcommand{\grad}[2][]{\nabla_{#1}(#2)}
\renewcommand{\div}[2][]{\mathrm{div}_{#1}(#2)}
\newcommand{\I}[4][]{\int\displaylimits_{#2}^{#1} #3 \,\mathrm{d}#4}
\newcommand{\abs}[1]{|#1|}
\newcommand{\seminorm}[2][]{|#2|_{#1}}
\newcommand{\norm}[2][]{\|#2\|_{#1}}
\newcommand{\normB}[2][]{\bigg\|#2\bigg\|_{#1}}
\newcommand{\skalar}[3][]{\langle#2,#3\rangle_{#1}}
\newcommand{\skalarB}[3][]{\bigg\langle#2,#3\bigg\rangle_{#1}}
\newcommand{\bilinear}[3][]{#1(#2,#3)}
\newcommand{\ran}[1]{\mathrm{ran}(#1)}
\newcommand{\rank}[1]{\mathrm{rank}(#1)}
\newcommand{\mvemph}[1]{\boldsymbol{#1}}
\newcommand{\diag}[1]{\mathrm{diag}(#1)}
\newcommand{\detN}[1]{\mathrm{det}\,#1}
\newcommand{\spanN}[1]{\mathrm{span}\,#1}
\newcommand{\dimN}[1]{\mathrm{dim}\,#1}
\newcommand{\Ck}[2]{C^{#1}(#2)}
\newcommand{\CkPw}[2]{C^{#1}_{\mathrm{pw}}(#2)}
\newcommand{\lp}[2]{\ell^{#1}(#2)}
\newcommand{\Lp}[2]{L^{#1}(#2)}
\newcommand{\Hk}[2]{H^{#1}(#2)}
\newcommand{\HkO}[3][]{H^{#2}_{0}(#3 \ifNotEmpty{#1}{,} #1)}
\newcommand{\HkPw}[2]{H^{#1}_{\mathrm{pw}}(#2)}
\newcommand{\Wkp}[3]{W^{#1,#2}(#3)}
\newcommand{\WkpPw}[3]{W^{#1,#2}_{\mathrm{pw}}(#3)}
\newcommand{\Pp}[2]{\mathbb{P}^{#1}(#2)}
\newcommand{\Skp}[3]{\mathbb{S}^{#2,#1}(#3)}
\newcommand{\SkpO}[4][]{\mathbb{S}^{#3,#2}_{0}(#4 \ifNotEmpty{#1}{,} #1)}
\newcommand{\SkpOHarm}[4]{\mathbb{S}_{\mathrm{harm}}(#4)}
\newcommand{\Elements}{\mathcal{T}}
\newcommand{\ElementsA}{\mathcal{A}}
\newcommand{\ElementsB}{\mathcal{B}}
\newcommand{\ElementsC}{\mathcal{C}}
\newcommand{\ElementsD}{\mathcal{D}}
\newcommand{\ElementsS}{\mathcal{S}}
\newcommand{\Nodes}{\mathcal{N}}
\newcommand{\Patch}[2]{#1(#2)}
\newcommand{\h}[1]{h_{#1}}
\newcommand{\hMin}[1]{h_{\min,#1}}
\newcommand{\hMax}[1]{h_{\max,#1}}
\newcommand{\refElement}{\hat{T}}
\newcommand{\Tree}[2][]{\mathbb{T}_{#2}^{\ifNotEmpty{#1}{(#1)}}}
\newcommand{\depth}[1]{\mathrm{depth}(#1)}
\newcommand{\BPart}{\mathbb{P}}
\newcommand{\BPartAdm}{\mathbb{P}_{\mathrm{adm}}}
\newcommand{\BPartSmall}{\mathbb{P}_{\mathrm{small}}}
\newcommand{\HMatrices}[2]{\mathcal{H}(#1,#2)}
\newcommand{\CPcre}{\sigma_{\mathrm{Pcr}}}      
\newcommand{\CShape}{\sigma_{\mathrm{shp}}}     
\newcommand{\CUnif}{\sigma_{\mathrm{unif}}}     
\newcommand{\CCard}{\sigma_{\mathrm{card}}}     
\newcommand{\CAdm}{\sigma_{\mathrm{adm}}}       
\newcommand{\CSmall}{\sigma_{\mathrm{small}}}   
\newcommand{\CExp}{\sigma_{\mathrm{exp}}}       
\newcommand{\SolutionOp}[1]{S_{#1}}
\newcommand{\CutoffFc}[2]{\kappa_{#1}^{#2}}
\newcommand{\CutoffOp}[2]{K_{#1}^{#2}}
\newcommand{\CoarseningOp}[2]{Q_{#1}^{#2}}
\begin{document}

\title[Approximating inverse FEM matrices on non-uniform meshes with $\mathcal{H}$-matrices]{Approximating inverse FEM matrices\\ on non-uniform meshes with $\mathcal{H}$-matrices}

\author[N. Angleitner, M. Faustmann, J.M. Melenk]{Niklas Angleitner, Markus Faustmann, Jens Markus Melenk}

\address{Institute for Analysis and Scientific Computing (Inst. E 101), Vienna University of Technology, Wiedner Hauptstrasse 8-10, 1040 Wien, Austria}

\email{niklas.angleitner@tuwien.ac.at, markus.faustmann@tuwien.ac.at, melenk@tuwien.ac.at}

\date{\today}

\subjclass[2010]{Primary: 65F50, Secondary: 65F30, 65N30}   

\keywords{FEM, H-matrices, Approximability, Non-uniform meshes}

\begin{abstract}
We consider the approximation of the inverse of the finite element stiffness matrix in the data sparse $\mathcal{H}$-matrix format. For a large class of shape regular but possibly non-uniform meshes including graded meshes, we prove that the inverse of the stiffness matrix can be approximated in the $\mathcal{H}$-matrix format at an exponential rate in the block rank. Since the storage complexity of the hierarchical matrix is logarithmic-linear and only grows linearly in the block-rank, we obtain an efficient approximation that can be used, e.g., as an approximate direct solver or preconditioner for iterative solvers.
\end{abstract}

\maketitle

\section{Introduction}

Discretizations of elliptic partial differential equations on a domain $\Omega \subseteq \R^d$ using the classic finite element method (FEM) usually produce sparse linear systems of equations $\mvemph{A} \mvemph{x} = \mvemph{b}$ with storage requirements linear in the number of unknowns and linear complexity for the matrix-vector multiplication. However, the direct solution of these systems is computationally more expensive. Therefore, iterative solution methods (e.g., Krylov space methods) are popular in applications, since they only need matrix-vector multiplications, which can be done in linear complexity. A drawback of these methods is that convergence can be slow for matrices with large condition numbers unless a suitable preconditioner is employed. These preconditioners have to be taylored to the problem at hand making black box preconditioners that are based on (approximate) direct solvers particularly interesting. Moreover, if one is interested in solving the same problem with (many) different right-hand sides, a direct solver may be computationally advantageous. 

Hierarchical matrices ($\mathcal{H}$-matrices), introduced in \cite{hackbusch99} and extensively studied in the monograph \cite{Hackbusch_Hierarchical_matrices}, provide a different solution approach to this problem that does not suffer from the drawbacks of classic direct and iterative methods. $\mathcal{H}$-matrices are blockwise low-rank matrices. For suitable block structures and block ranks, storing an $\mathcal{H}$-matrix is of logarithmic-linear complexity. Approximating a given matrix in the $\mathcal{H}$-matrix format thus effects a compression. A main difference to other compression methods such as multipole expansions, \cite{rokhlin85,greengard-rokhlin97}, or wavelet methods, \cite{PSS97,schneider98,tausch-white03}, is that the  $\mathcal{H}$-matrix format allows for an approximate arithmetic. It is possible to add and multiply as well as compute inverses and $LU$-decompositions efficiently in the format, \cite{grasedyck01,GH03,Hackbusch_Hierarchical_matrices}. Therefore, using an $\mathcal{H}$-matrix approximation to the inverse $\mvemph{A}^{-1}$ gives an approximate direct solution method of logarithmic linear complexity that can be applied efficiently to multiple right-hand sides. Moreover, an $LU$-decomposition in the $\mathcal{H}$-matrix format can be used as a black-box preconditioner in iterative solvers, \cite{bebendorf07,GHK08,GKL08}. Nonetheless, we mention that the accuracy in terms of the maximal blockwise rank of the computed approximations to $\mvemph{A}^{-1}$ (or the $LU$-decomposition) using $\mathcal{H}$-matrix arithmetic is not fully understood yet.

In order to explain the numerical success of these approximations, first observed in \cite{grasedyck01}, several works in the literature provide existence results of approximations to the inverse matrices in the $\mathcal{H}$-matrix format. For the inverses of FEM matrices, e.g., see \cite{BH03,bebendorf05,boerm10,Faustmann_H_matrices_FEM} and for inverse BEM matrices, see \cite{FMP16,FMP17}. These analyses are restricted to the case of (quasi)uniform meshes, i.e., all mesh elements have comparable size. In a typical FEM scenario, however, locally refined meshes are employed with mesh elements varying greatly in size in order to account for effects such as locally reduced regularity of the solution. A classic example are graded meshes for the solution of elliptic problems in corner domains, \cite{babuska-kellogg-pitkaeranta79a}.

In this article, we generalize the results of \cite{Faustmann_H_matrices_FEM} for quasiuniform meshes to meshes of so called \emph{locally bounded cardinality} (cf. \cref{Mesh_loc_bd_card}), which includes both uniform meshes and algebraically graded meshes. Our main result states that the inverses of FEM matrices for such meshes can be approximated by hierarchical matrices such that the error converges exponentially in the $\mathcal{H}$-matrix block rank $r$. Given a clustering strategy suitable for non-uniform grids, cf. \cite{Grasedyck_Clustering}, the storage complexity of the $\mathcal{H}$-matrix approximant is of logarithmic linear complexity $\Landau{r N \ln N}$. Moreover, we develop an abstract framework that allows for more general FEM basis functions that do not need to have local supports. In fact, locality is necessary only for a set of \emph{dual functions}, which is a substantially weaker assumption. Finally, we streamline some of the arguments made in \cite{Faustmann_H_matrices_FEM}. While not repeated in this article, we mention that the (mostly algebraic) techniques of \cite[Section~5]{Faustmann_H_matrices_FEM} can be employed in exactly the same way to derive exponentially convergent approximate $LU$-decompositions in the $\mathcal{H}$-matrix format.

The present paper is structured as follows: In \cref{Sec_Main_results} we introduce all necessary definitions and concepts and state our main result, \cref{System_matrix_HMatrix_approx}. \cref{Sec_Proof_of_main_result} is dedicated to the proof of the main result. The main technical contribution is the discrete Caccioppoli-type estimate presented in \cref{Space_SkpOHarm_Cacc}, which is of independent interest. For a certain class of functions, it allows us to bound the $H^1$-seminorm on a given subdomain by the $L^2$-norm on a slightly larger subdomain. Finally, \cref{Sec_Numerical_results} provides a numerical example that illustrates our main result.

Concerning notation: We write ``$a \cleq b$'' iff there exists a constant $C>0$ such that ``$a \leq C b$''. The constant might depend on the space dimension $d$, the domain $\Omega$, the coefficients of the PDE, the shape regularity constant of the mesh, and the polynomial degree of the discrete spline space, but it is \emph{in}dependent of all critical parameters such as the mesh width. We write $a \ceq b$, if there hold both $a \cleq b$ and $a \cgeq b$. Matrices and vectors in linear systems of equations are expressed in boldface letters, e.g., $\mvemph{A} \in \R^{N \times N}$ and $\mvemph{f} \in \R^N$. For all $x \in \R^d$ and $\epsilon > 0$, we write $\Ball[2]{x}{r} := \Set{y \in \R^d}{\norm[2]{y-x} < \epsilon}$ for the Euclidean ball of radius $r$ centered at $x$. The norm of the sequence spaces $\ell^1$ and $\ell^2$ is denoted by $\norm[1]{\cdot}$ and $\norm[2]{\cdot}$. For $k \geq 0$, $q \in [1,\infty]$ and domains $\Omega \subseteq \R^d$, we denote the Sobolev by $\Wkp{k}{q}{\Omega}$. For a given mesh $\Elements$, we denote by $\WkpPw{k}{q}{\Elements}$ the broken Sobolev space consisting of elementwise functions from $W^{k,q}$. For all $v \in \WkpPw{k}{q}{\Elements}$ and $\ElementsB \subseteq \Elements$, we set $\seminorm[\Wkp{k}{q}{\ElementsB}]{v} := (\sum_{T \in \ElementsB} \seminorm[\Wkp{k}{q}{T}]{v}^q)^{1/q}$ and $\seminorm[\Wkp{k}{\infty}{\ElementsB}]{v} := \max_{T \in \ElementsB} \seminorm[\Wkp{k}{\infty}{T}]{v}$. Similarly, $\CkPw{0}{\Elements}$ denotes the space of piecewise continuous functions. Finally, it will facilitate notation on numerous occasions to define the \emph{(discrete) support} of a function $v \in \Lp{2}{\Omega}$ on a mesh $\Elements$ by $\supp[\Elements]{v} := \Set{T \in \Elements}{\,\restrictN{v}{T} \not\equiv 0}$. In particular, we have $\supp[\Elements]{v} \subseteq \Elements$ and $\bigcup\supp[\Elements]{v} \subseteq \R^d$, which slightly differs from the usual definition of a support, namely, $\supp{v} := \overline{\Set{x \in \Omega}{v(x) \neq 0}} \subseteq \R^d$.

\section{Main results} \label{Sec_Main_results}
\subsection{The model problem} \label{SSec_Model_problem}

We investigate the following \emph{model problem}: Let $d \geq 1$ and $\Omega \subseteq \R^d$ be a bounded polyhedral Lipschitz domain. Furthermore, let $a_1 \in \Lp{\infty}{\Omega,\R^{d \times d}}$, $a_2 \in \Lp{\infty}{\Omega,\R^d}$ and $a_3 \in \Lp{\infty}{\Omega,\R}$ be given coefficient functions and $f \in \Lp{2}{\Omega}$ be a given right-hand side. We seek a weak solution $u \in \HkO{1}{\Omega}$ to the following equations:
\begin{equation*}
\begin{array}{rcll}
-\div{a_1 \* \gradN{u}} + a_2 \cdot \gradN{u} + a_3u &=& f & \text{in} \,\, \Omega, \\
u &=& 0 & \text{on} \,\, \boundaryN{\Omega}.
\end{array}
\end{equation*}

In the present work, we restrict ourselves to homogeneous Dirichlet conditions. For the treatment of Neumann and Robin boundary conditions, the same arguments as in \cite{Faustmann_H_matrices_FEM} can be employed.

We assume that $a_1$ is coercive in the sense $\skalar{a_1(x) y}{y} \geq \alpha_1 \norm[2]{y}^2$
for all $x \in \Omega$, $y \in \R^d$ and some constant 
$\alpha_1 > \CPcre^2 (\norm[\Lp{\infty}{\Omega}]{a_2} + \norm[\Lp{\infty}{\Omega}]{a_3}) \geq 0$. 
Here, $\CPcre>0$ denotes the constant in the Poincaré inequality $\norm[\Hk{1}{\Omega}]{\cdot} \leq \CPcre \seminorm[\Hk{1}{\Omega}]{\cdot}$ on $\HkO{1}{\Omega}$.

\begin{definition} \label{Bilinear_form}
We introduce the  bilinear form:
\begin{equation*}
\forall u,v \in \HkO{1}{\Omega}: \quad \quad \bilinear[a]{u}{v} := \skalar[\Lp{2}{\Omega}]{a_1 \gradN{u}}{\gradN{v}} + \skalar[\Lp{2}{\Omega}]{a_2 \* \gradN{u}}{v} + \skalar[\Lp{2}{\Omega}]{a_3 u}{v}.
\end{equation*}
\end{definition}

The weak formulation of the \emph{model problem} reads as follows: Find $u \in \HkO{1}{\Omega}$ such that
\begin{equation*}
\forall v \in \HkO{1}{\Omega}: \quad \quad \bilinear[a]{u}{v} = \skalar[\Lp{2}{\Omega}]{f}{v}.
\end{equation*}

The assumptions on the PDE coefficients imply that the bilinear form $\bilinear[a]{\cdot}{\cdot}$ is continuous and coercive, cf. \cref{Bilinear_form_props}. In particular, the well-known Lax-Milgram Lemma yields the existence of a unique solution $u \in \HkO{1}{\Omega}$.

\subsection{The mesh} \label{SSec_Mesh}

Throughout the text, we consider regular, affine meshes in the following sense:

\begin{definition}[Mesh] \label{Mesh}
A finite set $\Elements \subseteq \Pow{\Omega}$ is a \emph{mesh} if there exists an open simplex $\refElement \subseteq \R^d$ (the \emph{reference element}) such that every \emph{element} $T \in \Elements$ is of the form $T = F_T(\refElement)$, where $\fDef{F_T}{\R^d}{\R^d}$ is an affine diffeomorphism. Furthermore, the elements must be pairwise disjoint, i.e., $\meas{T \cap S} = 0$ for all $T \neq S \in \Elements$, and constitute a partition of $\Omega$, i.e., $\bigcup_{T \in \Elements} \overline{T}  = \overline{\Omega}$. Finally, a mesh must be regular in the sense of \cite{Ciarlet_FEM_Meshes}.
\end{definition}

We call a collection of mesh elements $\ElementsB \subseteq \Elements$ a \emph{cluster}. In the literature on hierarchical matrices, the word \emph{cluster} is typically reserved for collections of vector/matrix indices $I \subseteq \set{1,\dots,N}$. In the present work, however, we deal with collections of mesh elements $\ElementsB \subseteq \Elements$ much more frequently. We also note that both concepts are intimately linked via \cref{Basis_fcts_Patch}.

For every subset $B \subseteq \R^d$, we call the set of neighboring mesh elements 
\begin{equation*}
\Patch{\Elements}{B} := \Set{T \in \Elements}{\closureN{T} \cap \closureN{B} \neq \emptyset} \subseteq \Elements
\end{equation*}
the \emph{patch} of $B$. Similarly, for every cluster $\ElementsB \subseteq \Elements$, we set $\Patch{\Elements}{\ElementsB} := \bigcup_{B \in \ElementsB} \Patch{\Elements}{B} \subseteq \Elements$.

To measure the size of an element $T \in \Elements$, we introduce the local \emph{mesh width} $\h{T} := \sup_{x,y \in T} \norm[2]{y-x}$. The corresponding aggregate mesh widths for a cluster $\ElementsB \subseteq \Elements$ read $\h{\ElementsB} := \hMax{\ElementsB} := \max_{T \in \ElementsB} \h{T}$ and $\hMin{\ElementsB} := \min_{T \in \ElementsB} \h{T}$.

Finally, for every $T \in \Elements$, we denote the center of the largest inscribable ball by $x_T \in T$ (the \emph{incenter}). We assume that $\Elements$ is part of a \emph{shape-regular} family of meshes, i.e., there exists a constant $\CShape \geq 1$ such that
\begin{equation*}
\forall T \in \Elements: \quad \quad \Ball[2]{x_T}{\CShape^{-1} \h{T}} \subseteq T \subseteq \bigcup\Patch{\Elements}{T} \subseteq \Ball[2]{x_T}{\CShape \h{T}}.
\end{equation*}

\begin{definition} \label{Mesh_Metric}
We define the \emph{mesh metric}
\begin{equation*}
\forall T,S \in \Elements: \quad \quad \dist[\Elements]{T}{S} := \norm[2]{x_S-x_T}.
\end{equation*}

For all clusters $\ElementsA,\ElementsB \subseteq \Elements$, we denote the corresponding diameters and distances by
\begin{equation*}
\diam[\Elements]{\ElementsA} := \max_{A_1,A_2 \in \ElementsA} \dist[\Elements]{A_1}{A_2}, \quad \quad \dist[\Elements]{\ElementsA}{\ElementsB} := \min_{\substack{A \in \ElementsA, \\ B \in \ElementsB}} \dist[\Elements]{A}{B}.
\end{equation*}

If $\ElementsA$ or $\ElementsB$ contains only one element, e.g., $\ElementsA = \set{T}$, we drop the enclosing braces and simply write $\dist[\Elements]{T}{\ElementsB} := \dist[\Elements]{\set{T}}{\ElementsB}$. Furthermore, $\diam[\Elements]{T} := \diam[\Elements]{\set{T}} = 0$ by definition of the cluster diameter.
\end{definition}

We refer to \cref{Mesh_Metric_props} for some basic properties of the mesh metric.

Compared to \cite{Faustmann_H_matrices_FEM}, we consider a more general class of meshes. Here, the crucial property is the so called \emph{locally bounded cardinality} defined in the following \cref{Mesh_loc_bd_card}. Note that both \emph{uniform} and \emph{graded} meshes have this property, cf.~\cref{SSec_Unif_graded_mesh}.

\begin{definition} \label{Mesh_loc_bd_card}
A mesh $\Elements \subseteq \Pow{\Omega}$ has \emph{locally bounded cardinality}, if there exists a constant $\CCard \geq 1$ such that
\begin{equation*}
\h{\Elements}^{\CCard} \cleq \hMin{\Elements}, \quad \quad \quad \forall \ElementsB \subseteq \Elements: \quad \cardN{\ElementsB} \cleq \( 1 + \frac{\diam[\Elements]{\ElementsB}}{\h{\ElementsB}} \)^{d\CCard}.
\end{equation*}
\end{definition}

\subsection{The basis- and dual functions}

\begin{definition}[Spline spaces] \label{Space_Skp}
Let $k \geq 0$ and $p \geq 0$. We introduce the finite-dimensional \emph{spline spaces}
\begin{eqnarray*}
\Skp{k}{p}{\Elements} &:=& \Set{v \in \Hk{k}{\Omega}}{\forall T \in \Elements: v \circ F_T \in \Pp{p}{\refElement}}, \\
\SkpO{k}{p}{\Elements} &:=& \Skp{k}{p}{\Elements} \cap \HkO{1}{\Omega},
\end{eqnarray*}
where $\Pp{p}{\refElement} := \spanN{\Set{\refElement \ni x \mapsto x^q}{\norm[1]{q} \leq p}}$ 
denotes the usual space of polynomials of (total) degree $p$ on the reference element.
\end{definition}

The following definition introduces the bases of $\SkpO{1}{p}{\Elements}$ that we consider:

\begin{definition}[Basis with local dual functions] \label{Basis_fcts}
Let $p \geq 1$ and $N := \dimN{\SkpO{1}{p}{\Elements}}$. A basis $\set{\phi_1,\dots,\phi_N} \subseteq \SkpO{1}{p}{\Elements}$ has a system of (local) \emph{dual functions} $\set{\lambda_1,\dots,\lambda_N} \subseteq \Lp{2}{\Omega}$, if, for all $n,m \in \set{1,\dots,N}$ and $\mvemph{x} \in \R^N$, there hold the relations
\begin{equation*}
\skalar[\Lp{2}{\Omega}]{\phi_n}{\lambda_m} = \kronecker{nm}, \quad \quad \quad \normB[\Lp{2}{\Omega}]{\sum_{m=1}^{N} \mvemph{x}_m \lambda_m} \cleq \hMin{\Elements}^{-d/2} \norm[2]{\mvemph{x}}.
\end{equation*}

The implied constant may only depend on $d$, $p$, and the shape regularity of the mesh $\Elements$.
\end{definition}

\begin{remark}
Note that we \emph{do not} assume \emph{local} basis functions $\phi_n$, i.e., $\supp[\Elements]{\phi_n} = \Elements$ is allowed. On the other hand, the dual functions $\lambda_n$ \emph{should} have local supports in order to guarantee competitive memory requirements for the $\mathcal{H}$-matrices (cf.~\cref{H_matrices_Remark}). Furthermore, the specific exponent of $\hMin{\Elements}^{-d/2}$ in the stability bound is not crucial, as it only affects the exponent of the prefactor $N^{\CCard + 2}$ in \cref{System_matrix_HMatrix_approx}.
\end{remark}

The fundamental idea of the present work is to derive properties of matrices from properties of function spaces. 
Naturally, one has to think about the connection between abstract matrix indices $n \in \set{1,\dots,N}$ and corresponding physical subdomains of $\Omega$, which is captured in the following definition.

\begin{definition}[Index patches] \label{Basis_fcts_Patch}
We define the \emph{index patches}
\begin{equation*}
\forall I \subseteq \set{1,\dots,N}: \quad \quad \Patch{\Elements}{I} := \bigcup_{n \in I} \supp[\Elements]{\lambda_n} \subseteq \Elements.
\end{equation*}
\end{definition}

Recall from \cref{SSec_Mesh} that $\Patch{\Elements}{B} \subseteq \Elements$ is the patch of a physical subdomain $B \subseteq \R^d$ and that $\Patch{\Elements}{\ElementsB} \subseteq \Elements$ is the patch of a cluster $\ElementsB \subseteq \Elements$. Now, we also have patches $\Patch{\Elements}{I} \subseteq \Elements$ for collections of matrix indices $I \subseteq \set{1,\dots,N}$. Since all three types of patches follow a common idea, we chose the similarity in notation on purpose.

\subsection{The system matrix} \label{SSec_System_matrix}

Let $\Elements \subseteq \Pow{\Omega}$ be a mesh and $p \geq 1$ a fixed polynomial degree. Let $\SkpO{1}{p}{\Elements} \subseteq \HkO{1}{\Omega}$ be the corresponding spline space. We discretize the model problem from \cref{SSec_Model_problem} by means of the spline space and get the following \emph{discrete model problem}: For given $f \in \Lp{2}{\Omega}$, find $u \in \SkpO{1}{p}{\Elements}$ such that
\begin{equation*}
\forall v \in \SkpO{1}{p}{\Elements}: \quad \quad \bilinear[a]{u}{v} = \skalar[\Lp{2}{\Omega}]{f}{v}.
\end{equation*}

Again, existence and uniqueness of a solution $u \in \SkpO{1}{p}{\Elements}$ follow from \cref{Bilinear_form_props} and the Lax-Milgram Lemma.

As usual, given a basis of the discrete space, the discrete model problem can be rephrased as an equivalent linear system of equations. The bilinear form $\bilinear[a]{\cdot}{\cdot}$ from \cref{Bilinear_form} and the basis functions $\phi_n \in \SkpO{1}{p}{\Elements}$ from \cref{Basis_fcts} compose the governing system matrix.

\begin{definition} \label{System_matrix}
We define the system matrix
\begin{equation*}
\mvemph{A} := (\bilinear[a]{\phi_n}{\phi_m})_{m,n=1}^{N} \in \R^{N \times N}.
\end{equation*}
\end{definition}

Note that the unique solvability of the discrete model problem already ensures that the matrix $\mvemph{A}$ is invertible.

\subsection{Hierarchical matrices}

\begin{definition} \label{Block_partition}
A subset $\BPart \subseteq \Pow{\set{1,\dots,N}} \times \Pow{\set{1,\dots,N}}$ is called a \emph{block partition}, if
\begin{equation*}
\bigcupDot_{(I,J) \in \BPart} I \times J = \set{1,\dots,N} \times \set{1,\dots,N}.
\end{equation*}

Let $\CAdm,\CSmall>0$. A block partition $\BPart$ is called \emph{admissible}, if it can be split into parts
\begin{equation*}
\BPart = \BPartAdm \,\, \dot{\cup} \,\, \BPartSmall
\end{equation*}
with
\begin{equation*}
\begin{array}{lrclcl}
\forall (I,J) \in \BPartAdm: & 0 &<& \diam[\Elements]{\Patch{\Elements}{I}} &\leq& \CAdm \dist[\Elements]{\Patch{\Elements}{I}}{\Patch{\Elements}{J}}, \\
\forall (I,J) \in \BPartSmall: \quad & && \min\set{\cardN{I}, \cardN{J}} &\leq& \CSmall.
\end{array}
\end{equation*}
\end{definition}

Typically, an admissible block partition $\BPart$ is constructed in two stages:

First, the indices $I_{\mathrm{root}} := \set{1,\dots,N}$ are split up into a \emph{(hierarchical) cluster tree} $\Tree{N} := (\Tree[L]{N})_{L \geq 1}$. The first level is $\Tree[1]{N} := \set{I_{\mathrm{root}}}$. Then, given the level $\Tree[L]{N}$, all $I \in \Tree[L]{N}$ with $\cardN{I} > \CSmall$ are split in the form $I = I_1 \dot{\cup} I_2$ with $I_1 \neq \emptyset \neq I_2$ via a predefined \emph{clustering strategy} $I \mapsto (I_1,I_2)$. (See, e.g., \cite{Hackbusch_Hierarchical_matrices} for some examples of such clustering strategies.) The combined set of all such children defines the next layer, $\Tree[L+1]{N}$. Clearly, this process stops after a finite number of layers denoted by $\depth{\Tree{N}}$.

Second, the matrix indices $I_{\mathrm{root}} \times I_{\mathrm{root}}$ are split up into a \emph{(hierarchical) block cluster tree} $\Tree{N \times N} := (\Tree[L]{N \times N})_{L \geq 1}$. Here, the first level is $\Tree[1]{N \times N} := \set{(I_{\mathrm{root}}, I_{\mathrm{root}})}$. Then, given the level $\Tree[L]{N \times N}$, all $(I,J) \in \Tree[L]{N \times N}$ with $\diam[\Elements]{\Patch{\Elements}{I}} > \CAdm \dist[\Elements]{\Patch{\Elements}{I}}{\Patch{\Elements}{J}}$ are split into the children $(I_1,J_1), (I_1,J_2), (I_2,J_1), (I_2,J_2)$, where $I = I_1 \dot{\cup} I_2$ and $J = J_1 \dot{\cup} J_2$ as before. Again, all these children are collected in the layer $\Tree[L+1]{N \times N}$. Finally, the block partition $\BPart$ is just the set of all leaves of $\Tree{N \times N}$.

\begin{definition} \label{H_matrices}
Let $\BPart$ be an admissible block partition and $r \in \N$ a given \emph{block rank bound}. We define the set of \emph{$\mathcal{H}$-matrices} by
\begin{equation*}
\HMatrices{\BPart}{r} := \Set{\mvemph{B} \in \R^{N \times N}}{\forall (I,J) \in \BPartAdm: \exists \mvemph{X} \in \R^{I \times r}, \mvemph{Y} \in \R^{J \times r}: \restrictN{\mvemph{B}}{I \times J} = \mvemph{X} \mvemph{Y}^T}.
\end{equation*}
\end{definition}

\begin{remark} \label{H_matrices_Remark}
By \cite[Lemma~{6.13}]{Hackbusch_Hierarchical_matrices}, the memory requirements to store an $\mathcal{H}$-matrix $\mvemph{B} \in \HMatrices{\BPart}{r}$ can be bounded by the quantity $C_{\mathrm{sparse}}(\Tree{N \times N}) (\CSmall + r) \mathrm{depth}(\Tree{N}) N$, where $C_{\mathrm{sparse}}(\Tree{N \times N})>0$ denotes the so-called \emph{sparsity constant}.

In \cite{Grasedyck_Clustering}, the authors present a \emph{geometrically balanced} clustering strategy that guarantees the upper bounds $C_{\mathrm{sparse}}(\Tree{N \times N}) \cleq 1$ and $\depth{\Tree{N}} \cleq \ln(\hMin{\Elements}^{-1})$. Using the relation $\hMin{\Elements} \cgeq \h{\Elements}^{\CCard}$ from \cref{Mesh_loc_bd_card} for meshes with locally bounded cardinality, we can conclude $\depth{\Tree{N}} \cleq \ln(N)$. In particular, we get an overall bound of $\Landau{r N \ln N}$ for the memory requirements to store the matrix $\mvemph{B}$.

Note that this line of reasoning implicitly assumes that the dual functions $\lambda_n \in \Lp{2}{\Omega}$ from \cref{Basis_fcts} have \emph{local} supports. More precisely, we need $\supp[\Elements]{\lambda_n} \subseteq \Patch{\Elements}{T_n}$ for some $T_n \in \Elements$ and have to ensure that these characteristic elements $T_n$ do not coincide too frequently, i.e. $\cardN{\Set{n}{T_n = T}} \cleq 1$ for all elements $T \in \Elements$.
\end{remark}

\subsection{The main result}

The following theorem is the main result of the present work. It states that inverses of FEM matrices with meshes of locally bounded cardinality can be approximated at an exponential rate by hierarchical matrices.

\begin{theorem} \label{System_matrix_HMatrix_approx}
Let $\Elements \subseteq \Pow{\Omega}$ be a mesh of locally bounded cardinality for some $\CCard \geq 1$ in the sense of \cref{Mesh_loc_bd_card} and $\set{\phi_1,\dots,\phi_N} \subseteq \SkpO{1}{p}{\Elements}$ a basis that has a system of local dual functions (see \cref{Basis_fcts}). Let $\bilinear[a]{\cdot}{\cdot}$ be the elliptic bilinear form from \cref{Bilinear_form} and $\mvemph{A} \in \R^{N \times N}$ be the corresponding Galerkin stiffness matrix (\cref{System_matrix}). Finally, let $\BPart$ be an admissible block partition as in \cref{Block_partition}. Then there exists a constant $\CExp>0$ such that, for every block rank bound $r \in \N$, there exists an $\mathcal{H}$-matrix $\mvemph{B} \in \HMatrices{\BPart}{r}$
with
\begin{equation*}
\norm[2]{\mvemph{A}^{-1} - \mvemph{B}} \cleq N^{\CCard + 2} \exp(-\CExp r^{1/(d\CCard + 1)}).
\end{equation*}
\end{theorem}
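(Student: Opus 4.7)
The strategy is the standard single-block / multi-step coarsening approach used in \cite{Faustmann_H_matrices_FEM}, adapted to the more permissive mesh class introduced here. The block partition $\BPart$ decomposes the matrix into inadmissible blocks $(I,J)\in\BPartSmall$, which are stored exactly (this is possible because $\min\{\#I,\#J\}\le\CSmall$, so they trivially lie in $\HMatrices{\BPart}{r}$ once $r\ge\CSmall$), and admissible blocks $(I,J)\in\BPartAdm$, where $\Patch{\Elements}{I}$ and $\Patch{\Elements}{J}$ are separated by a positive distance. For such admissible blocks I would translate the algebraic low-rank approximation problem for $\restrictN{\mvemph{A}^{-1}}{I\times J}$ into a functional one via the dual functions: for any $\mvemph{x}\in\R^{J}$ the vector $\mvemph{A}^{-1}\mvemph{x}$ collects the coefficients in the basis $\{\phi_n\}$ of the Galerkin solution $u\in\SkpO{1}{p}{\Elements}$ of $\bilinear[a]{u}{v}=\skalar[\Lp{2}{\Omega}]{f}{v}$ with $f=\sum_{n\in J}\mvemph{x}_n\lambda_n$. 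Reading off coefficients again via the duals shows that approximating $\restrictN{\mvemph{A}^{-1}}{I\times J}$ by a rank-$r$ factorization is equivalent to approximating the image of this solution operator, viewed as a map into the finite-dimensional space of coefficients indexed by $I$, by an $r$-dimensional subspace.

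The central observation is then that, since $\mathrm{supp}_{\Elements}(f)\subseteq\Patch{\Elements}{J}$ is disjoint from $\Patch{\Elements}{I}$, the Galerkin solution $u$ is \emph{discretely $a$-harmonic} on (a neighborhood of) $\bigcup\Patch{\Elements}{I}$: it satisfies $\bilinear[a]{u}{v}=0$ for every test function $v\in\SkpO{1}{p}{\Elements}$ whose support lies in the separated region. On such discretely harmonic functions the discrete Caccioppoli-type inequality \cref{Space_SkpOHarm_Cacc} applies and allows us to control the $H^1$-seminorm on a cluster by the $L^2$-norm on a slightly inflated cluster. Combined with a Scott–Zhang-type coarsening operator $\CoarseningOp{}{}$ that replaces a fine FEM function by its nearest coarse-mesh polynomial, this yields a single-step estimate of the shape ``if $u$ is discretely harmonic on a cluster $\ElementsC$ inflated by $\delta$, then there is a low-dimensional approximant $u_H$ with $\norm[\Lp{2}{\ElementsC}]{u-u_H}\le q\,\norm[\Lp{2}{(\ElementsC)^\delta_{\Elements}}]{u}$'' where $q<1$ depends on the admissibility parameter.

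Iterating this step across a telescope of $L$ concentric inflated clusters, I obtain an approximant whose error decays as $q^L$ while its dimension grows with the total number of coarse basis functions used in the $L$ nested shells. The key new counting input is \cref{Mesh_loc_bd_card}: the number of fine elements in a shell of radius $\delta$ around $\Patch{\Elements}{I}$ is bounded by $(1+\delta/\h{\Elements})^{d\CCard}$, and after coarsening this translates into a per-step dimension bound of order $L^{d\CCard}$. Thus the total rank after $L$ steps is of order $L^{d\CCard+1}$, and balancing $r\sim L^{d\CCard+1}$ with the exponential factor $q^L$ yields the claimed exponential convergence $\exp(-\CExp r^{1/(d\CCard+1)})$. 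Finally, passing from the blockwise approximation error to the global spectral-norm error $\norm[2]{\mvemph{A}^{-1}-\mvemph{B}}$ introduces the algebraic prefactor: the operator norm of $\mvemph{A}^{-1}$ is controlled by combining coercivity of $\bilinear[a]{\cdot}{\cdot}$, the stability bound on the dual system in \cref{Basis_fcts} (which contributes a factor $\hMin{\Elements}^{-d}$), the inverse estimate, and $\hMin{\Elements}\cgeq\h{\Elements}^{\CCard}\cgeq N^{-\CCard/d}$, accounting for the factor $N^{\CCard+2}$.

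\textbf{Main obstacle.} The routine parts are the block-wise reduction, the summation over blocks, and the algebraic bookkeeping of the prefactor. The delicate step is the single-step coarsening estimate on non-uniform meshes: one must (i) invoke the discrete Caccioppoli inequality \cref{Space_SkpOHarm_Cacc} in a form compatible with the admissibility-induced geometric separation, (ii) construct a coarsening operator $\CoarseningOp{}{}$ whose approximation error on a shell is quantitatively controlled independently of the local variations in mesh width, and (iii) count the dimension of the resulting coarse space in terms of the shell width $\delta$ using precisely the $(1+\diam[\Elements]{\ElementsB}/\h{\ElementsB})^{d\CCard}$ cardinality bound from \cref{Mesh_loc_bd_card}. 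Getting the $d\CCard$ exponent to appear correctly here — rather than the $d$ exponent that suffices in the quasi-uniform case — is what fixes the final exponent $1/(d\CCard+1)$ in the convergence rate.
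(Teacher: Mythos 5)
Your proposal follows essentially the same route as the paper: the representation $\mvemph{A}^{-1}\mvemph{f}=\Lambda^T\SolutionOp{\Elements}\Lambda\mvemph{f}$ translating blockwise low-rank approximation into a subspace-approximation problem for $\SolutionOp{\Elements}$, the identification of discretely $a$-harmonic functions on separated clusters, the discrete Caccioppoli inequality of \cref{Space_SkpOHarm_Cacc}, a single-step coarsening operator combining a discrete cut-off with a projection onto a coarse uniform mesh, iteration over $L$ concentric shells to get geometric decay, and the locally-bounded-cardinality bound turning the shell count into $\rank \cleq L^{d\CCard+1}$ so that balancing yields $\exp(-\CExp r^{1/(d\CCard+1)})$, with the prefactor split into $N^2$ from the block-to-global norm estimate and $N^{\CCard}$ from $\norm{\Lambda}^2\cleq \hMin{\Elements}^{-d}\cleq N^{\CCard}$. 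The only minor deviations are cosmetic: the paper uses a piecewise $L^2$-orthogonal projection onto an auxiliary uniform mesh (followed by projection onto the harmonic subspace) rather than a Scott--Zhang-type operator, and the $N^2$ factor comes from the crude block-summation bound in \cref{Block_partition_matrix_norm} rather than directly from the operator norm of $\mvemph{A}^{-1}$, but neither changes the structure or the final result.
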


Under additional assumptions on the block partition $\BPart$, one can reduce the prefactor from $N^{\CCard + 2}$ to $\ln(N) N^{\CCard}$, see \cref{Block_partition_matrix_norm_Remark}. As shown in \cref{SSec_Unif_graded_mesh}, \emph{uniform} and \emph{algebraically graded} meshes have locally bounded cardinality. In particular, we immediately get the following corollary.

\begin{corollary}
Let $\Elements \subseteq \Pow{\Omega}$ be an algebraically graded mesh with grading exponent $\alpha \geq 1$ (see \cref{Mesh_graded}). Then \cref{System_matrix_HMatrix_approx} holds verbatim with $\CCard = \alpha$.
\end{corollary}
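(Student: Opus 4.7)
The strategy is to verify that any algebraically graded mesh with grading exponent $\alpha$ (see \cref{SSec_Unif_graded_mesh}) satisfies the two conditions of \cref{Mesh_loc_bd_card} with $\CCard = \alpha$; the claim then follows from \cref{System_matrix_HMatrix_approx} applied verbatim. Writing $h := \h{\Elements}$ and denoting the distance of an element $T$ to the grading vertex $x_0$ by $\rho_T$, the defining property of algebraic grading gives $\h{T} \ceq h \rho_T^{1-1/\alpha}$ away from $x_0$ and $\h{T} \ceq h^\alpha$ for elements touching $x_0$. The first inequality of \cref{Mesh_loc_bd_card}, $h^\alpha \cleq \hMin{\Elements}$, then follows immediately, the minimum being attained on the elements at $x_0$.

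For the cardinality bound, fix a cluster $\ElementsB \subseteq \Elements$, set $D := \diam[\Elements]{\ElementsB}$ and $h_\ElementsB := \h{\ElementsB}$, and pick $T^\ast \in \ElementsB$ realizing $\h{T^\ast} = h_\ElementsB$. The case $\alpha = 1$ (uniform mesh) reduces to the textbook shape-regular volume count, so assume $\alpha > 1$. The grading law yields $\rho^\ast := \rho_{T^\ast} \ceq (h_\ElementsB / h)^{\alpha/(\alpha-1)}$, and I would distinguish two cases. If $D \leq \rho^\ast / 2$, then every $T \in \ElementsB$ satisfies $\rho_T \ceq \rho^\ast$, hence $\h{T} \ceq h_\ElementsB$ throughout $\ElementsB$, so shape regularity gives the stronger bound $\cardN{\ElementsB} \cleq (1 + D/h_\ElementsB)^d$. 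If $D > \rho^\ast / 2$, every $T \in \ElementsB$ has width $\leq h_\ElementsB$ and, by the grading law, lies in the ball $\Ball[2]{x_0}{R}$ of radius $R \ceq \rho^\ast$. I would decompose this ball into dyadic annuli centered at $x_0$, bound the number of elements in each annulus by its volume divided by the $d$-th power of the local mesh width (given by the grading law), and sum the resulting geometric series. Its common ratio $2^{-d/\alpha} < 1$ means the outermost annulus dominates, giving $\cardN{\ElementsB} \cleq R^{d/\alpha} h^{-d}$.

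The main obstacle is the final bookkeeping step: combining $R \cleq D$ with $h_\ElementsB \cgeq h^\alpha$ to show $R^{d/\alpha} h^{-d} \cleq (D/h_\ElementsB)^{d\alpha}$. A direct exponent computation using $R \ceq (h_\ElementsB/h)^{\alpha/(\alpha-1)}$ reduces this inequality to the trivial $(h_\ElementsB/h^\alpha)^d \cgeq 1$, and it is precisely in this rearrangement that the exponent $d\alpha$ in \cref{Mesh_loc_bd_card} (rather than the naive $d$) is forced. Once both conditions are in place with $\CCard = \alpha$, an invocation of \cref{System_matrix_HMatrix_approx} finishes the proof.
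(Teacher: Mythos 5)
Your top-level strategy is exactly what the paper intends: show that an algebraically graded mesh satisfies \cref{Mesh_loc_bd_card} with $\CCard = \alpha$ and then invoke \cref{System_matrix_HMatrix_approx}. In the paper that verification \emph{is} \cref{Mesh_graded_card}, and the corollary is stated as an immediate consequence of it. So what you are really doing is re-deriving \cref{Mesh_graded_card} from scratch.

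The route you take for that re-derivation is, however, both different from the paper's and not general enough. You pin the grading to a single vertex $x_0$ and, in the case $D > \rho^\ast/2$, count elements in dyadic annuli around $x_0$, relying on the common ratio $2^{-d/\alpha} < 1$ so that the outermost annulus dominates. But \cref{Mesh_graded} grades towards an \emph{arbitrary} set $\Gamma$ (e.g.\ a reentrant edge or face), and for a $k$-dimensional $\Gamma$ the annulus at distance $\rho$ has volume $\ceq \rho^{d-k}$, so the per-annulus count scales like $h^{-d}\rho^{d/\alpha - k}$ and the dyadic series has ratio $2^{k-d/\alpha}$. As soon as $\alpha \geq d/k$ this ratio is $\geq 1$, the series is no longer dominated by the outermost term, and the bound $\cardN{\ElementsB} \cleq h^{-d}R^{d/\alpha}$ you need collapses. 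The paper's proof of \cref{Mesh_graded_card} never counts volumes near $\Gamma$ at all: it splits on $b := \min_{T\in\ElementsB}\dist[2]{x_T}{\Gamma}$ versus $\Delta b := \diam[\Elements]{\ElementsB}$, derives in the "near" case the lower bound $\hMin{\ElementsB} \geq \hMin{\Elements} \cgeq H^{\alpha} \cgeq \h{\ElementsB}^{\alpha}/(2\Delta b)^{\alpha-1}$ purely from the grading law and the triangle inequality, and then feeds this into the shape-regularity count of \cref{Mesh_card} $\bigl(\cardN{\ElementsB} \cleq (1 + \Delta b/\hMin{\ElementsB})^d\bigr)$, which is indifferent to what $\Gamma$ looks like. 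Your "far" case $D \leq \rho^\ast/2$ does match the paper's case $b > \Delta b$, and the exponent bookkeeping you sketch for the point case is correct; but to prove the corollary as stated you need to replace the annulus argument in the near case by the $\hMin{\ElementsB}$ lower bound (or an equivalent argument that does not see the dimension of $\Gamma$).
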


\section{Proof of main result} \label{Sec_Proof_of_main_result}
\subsection{Overview} \label{SSec_Proof_overview}

The techniques employed in the proof of our main result are similar to those developed in \cite{Faustmann_H_matrices_FEM} for uniform meshes. However, some  modifications are necessary to deal with the present case of non-uniform meshes $\Elements$ and (possibly) global basis functions $\phi_n \in \SkpO{1}{p}{\Elements}$. Additionally, we simplify several parts of the previous proof considerably.

1) Before we begin the proof, we give a motivation for the assumptions made in \cref{Mesh_loc_bd_card} and \cref{Basis_fcts}. In \cref{SSec_Unif_graded_mesh}, we present two types of meshes with locally bounded cardinality, namely \emph{uniform} and \emph{graded} meshes. The fact that every uniform mesh has locally bounded cardinality will be used during our proof in \cref{Coarsening_op_single}. The locally bounded cardinality of graded meshes shows that \cref{System_matrix_HMatrix_approx} is applicable for graded meshes in the sense of \cref{Mesh_graded}.

Then, in \cref{SSec_Dual_functions}, we present a practical choice for the dual functions $\lambda_n \in \Lp{2}{\Omega}$ from \cref{Basis_fcts} for a common choice of basis functions $\phi_n \in \SkpO{1}{p}{\Elements}$. The results from this section guarantee that \cref{System_matrix_HMatrix_approx} can be used for many different types of \emph{finite element} bases, including the classic \emph{hat functions}.

2) The starting point for our proof is an explicit representation formula for $\mvemph{A}^{-1}$. Since $\mvemph{A}^{-1}$ represents the act of solving the discretized model problem, it is only natural that the corresponding \emph{discrete solution operator} $\fDef{\SolutionOp{\Elements}}{\Lp{2}{\Omega}}{\SkpO{1}{p}{\Elements}}$ will be involved. Additionally, this endeavor requires the dual functions $\lambda_n \in \Lp{2}{\Omega}$ mentioned earlier. We present the explicit formula for $\mvemph{A}^{-1}$ at the end of \cref{SSec_Rep_formula}.

3) In \cref{SSec_Red_mat_func} we use this formula to go from the ``matrix level'' to the ``function level'': Initially, we reduce the problem of approximating $\mvemph{A}^{-1}$ as a whole to the problem of approximating $\restrictN{\mvemph{A}^{-1}}{I \times J}$ for each admissible block $(I,J) \in \BPartAdm$. (The small blocks $\BPartSmall$ are irrelevant in this matter.) As it turns out, this boils down to the following question:

Given admissible clusters $\ElementsB, \ElementsD \subseteq \Elements$ and a free parameter $L \in \N$, how can we construct a low-dimensional subspace $V_{\ElementsB,\ElementsD,L} \subseteq \Lp{2}{\Omega}$ that contains a good approximant of $\restrict{\SolutionOp{\Elements} f}{\ElementsB}$ for every $f \in \Lp{2}{\Omega}$ with $\supp[\Elements]{f} \subseteq \ElementsD$? More precisely, we want to achieve the bounds (for some fixed $\kappa \geq 1$)
\begin{equation*}
\dimN{V_{\ElementsB,\ElementsD,L}} \cleq L^{\kappa}, \quad \quad \quad \inf_{v \in V_{\ElementsB,\ElementsD,L}} \norm[\Lp{2}{\ElementsB}]{\SolutionOp{\Elements} f - v} \cleq 2^{-L} \norm[\Lp{2}{\ElementsD}]{f}.
\end{equation*}

The remaining sections will give an answer to this very question. Since the construction of $V_{\ElementsB,\ElementsD,L}$ is fairly technical and by no means straightforward, the proof is split into further parts:

4) As the notation ``$V_{\ElementsB,\ElementsD,L}$'' already suggests, the notion of \emph{locality} plays a prominent role in almost all parts of the proof. This is why we introduce so called \emph{inflated clusters}, \emph{discrete cut-off functions}, and the \emph{discrete cut-off operator} in \cref{SSec_Disc_cutoff_op}.

5) In \cref{SSec_Space_SkpOHarm} we investigate an important class of functions for our analysis, the spaces of \emph{locally discrete harmonic functions} $\SkpOHarm{1}{p}{\Elements}{\ElementsB} \subseteq \SkpO{1}{p}{\Elements}$. These subspaces have three important properties: First, for certain $f \in \Lp{2}{\Omega}$, they contain the image $\SolutionOp{\Elements} f$. Second, they are invariant under the influence of their respective discrete cut-off operators. Third, they allow for the \emph{discrete Caccioppoli inequality}, a key ingredient in deriving the asserted error bounds for $V_{\ElementsB,\ElementsD,L}$.

6) Finally, in \cref{SSec_Coarse_ops} we construct the \emph{single-} and \emph{multi-step coarsening operators}. For any given $u \in \SkpOHarm{1}{p}{\Elements}{\inflateN{\ElementsB}{\delta}}$ on the inflated cluster $\inflateN{\ElementsB}{\delta} \supseteq \ElementsB$, the single-step coarsening operator $\CoarseningOp{\ElementsB}{\delta}$ produces a ``coarse'' approximation $\CoarseningOp{\ElementsB}{\delta} u \in \SkpOHarm{1}{p}{\Elements}{\ElementsB}$ with a small approximation error on $\ElementsB$. This is by far the most intricate part of the proof and puts all the aforementioned concepts to use. Afterwards, the multi-step coarsening operator $\CoarseningOp{\ElementsB}{\delta,L}$ is just a combination of $L \in \N$ single-step coarsening operators.

7) In \cref{SSec_Put_together} we merely put all the pieces together and finish the proof of \cref{System_matrix_HMatrix_approx}.

\subsection{Examples of meshes with locally bounded cardinality} \label{SSec_Unif_graded_mesh}

In this subsection, we present two representatives of meshes with locally bounded cardinality (cf.~\cref{Mesh_loc_bd_card}): \emph{Uniform} meshes and \emph{graded} meshes. To verify the locally bounded cardinality property for a given mesh, the following lemma is helpful.

\begin{lemma} \label{Mesh_card}
Let $\Elements \subseteq \Pow{\Omega}$ be a shape-regular mesh as in \cref{Mesh}. Then, there hold the bounds
\begin{equation*}
\frac{1}{\h{\Elements}^d} \cleq \cardN{\Elements}, \quad \quad \quad \forall \ElementsB \subseteq \Elements: \quad \cardN{\ElementsB} \cleq \( 1 + \frac{\diam[\Elements]{\ElementsB}}{\hMin{\ElementsB}} \)^d.
\end{equation*}
\end{lemma}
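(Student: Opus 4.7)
The strategy is to translate both bounds into volume comparisons using the two-sided inclusion $\Ball[2]{x_T}{\CShape^{-1}\h{T}} \subseteq T \subseteq \Ball[2]{x_T}{\CShape \h{T}}$ from shape regularity, which immediately gives $\meas{T} \ceq \h{T}^d$ for every $T \in \Elements$.

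For the first inequality I would apply only the outer inclusion: each element satisfies $\meas{T} \cleq \h{T}^d \leq \h{\Elements}^d$, and summing over all $T \in \Elements$ yields $\meas{\Omega} = \sum_{T \in \Elements} \meas{T} \cleq \cardN{\Elements}\, \h{\Elements}^d$. Since $\meas{\Omega}$ is a fixed positive quantity that may be absorbed into the implicit constant, the claim $1/\h{\Elements}^d \cleq \cardN{\Elements}$ drops out.

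For the second inequality I would use the inner inclusion in a packing argument. Fix any reference element $T_0 \in \ElementsB$. By definition of the mesh metric, every $T \in \ElementsB$ satisfies $\norm[2]{x_T - x_{T_0}} \leq \diam[\Elements]{\ElementsB}$, so all incenters $x_T$ lie in $\Ball[2]{x_{T_0}}{\diam[\Elements]{\ElementsB}}$. Shape regularity together with $\h{T} \geq \hMin{\ElementsB}$ yields pairwise disjoint inscribed balls $\Ball[2]{x_T}{\CShape^{-1}\hMin{\ElementsB}} \subseteq T$ for $T \in \ElementsB$, all of which sit inside the enlarged ball $\Ball[2]{x_{T_0}}{\diam[\Elements]{\ElementsB} + \CShape^{-1}\hMin{\ElementsB}}$. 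Comparing volumes produces $\cardN{\ElementsB}\, \hMin{\ElementsB}^d \cleq (\diam[\Elements]{\ElementsB} + \hMin{\ElementsB})^d$, which rearranges directly into the bound stated in the lemma.

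The argument is essentially elementary geometry and I do not expect any serious obstacle; the only subtlety is to use $\hMin{\ElementsB}$ (rather than $\h{T}$) as the common inner radius so that the denominator in the final estimate matches the statement, and to observe that the inscribed balls live in the open interiors of the (open) simplices, so disjointness of the elements transfers to genuine disjointness of the balls. The degenerate singleton case $\cardN{\ElementsB}=1$, where $\diam[\Elements]{\ElementsB}=0$ by convention, is handled automatically by the ``$1+\cdots$'' on the right-hand side.
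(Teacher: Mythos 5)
Your proposal is correct and uses the same underlying mechanism as the paper: a volume comparison based on the shape-regularity inclusions $\Ball[2]{x_T}{\CShape^{-1}\h{T}} \subseteq T \subseteq \Ball[2]{x_T}{\CShape\h{T}}$. The paper's one-line proof runs through the identity $\sum_{T\in\ElementsB}\h{T}^d \ceq \sum_{T\in\ElementsB}\meas{T} = \meas{\bigcup\ElementsB}$ and then needs the auxiliary bound $\h{\ElementsB}\leq\max\set{\hMin{\ElementsB},\CShape\diam[\Elements]{\ElementsB}}$ to convert the resulting $\h{\ElementsB}$ into $\hMin{\ElementsB}$; your variant, packing pairwise disjoint balls of the uniform radius $\CShape^{-1}\hMin{\ElementsB}$ into $\Ball[2]{x_{T_0}}{\diam[\Elements]{\ElementsB}+\CShape^{-1}\hMin{\ElementsB}}$, produces $\hMin{\ElementsB}$ in the denominator directly and so sidesteps that auxiliary estimate -- a minor but genuine streamlining.
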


\begin{proof}
Both estimates follow from the relation $\sum_{T \in \ElementsB} \h{T}^d \ceq \sum_{T \in \ElementsB} \meas{T} = \meas{\bigcup\ElementsB}$ with appropriate $\ElementsB \subseteq \Elements$.
\end{proof}

\begin{definition} \label{Mesh_uniform}
A mesh $\Elements \subseteq \Pow{\Omega}$ is called \emph{uniform}, if there exists a constant $\CUnif \geq 1$ such that
\begin{equation*}
\hMin{\Elements} \leq \h{\Elements} \leq \CUnif \hMin{\Elements}.
\end{equation*}
\end{definition}

Using \cref{Mesh_card} we immediately get the following result:

\begin{lemma} \label{Mesh_uniform_card}
Every uniform mesh $\Elements \subseteq \Pow{\Omega}$ has locally bounded cardinality with $\CCard = 1$.
\end{lemma}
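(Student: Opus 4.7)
The plan is to verify the two conditions of Definition~\ref{Mesh_loc_bd_card} with the explicit choice $\CCard = 1$. Both will follow essentially by inspection from the uniformity assumption combined with the generic cardinality estimate provided by Lemma~\ref{Mesh_card}.

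The first condition, $\h{\Elements}^{\CCard} \cleq \hMin{\Elements}$, reduces for $\CCard = 1$ to $\h{\Elements} \cleq \hMin{\Elements}$, which is merely a rewriting of the uniformity assumption $\h{\Elements} \leq \CUnif \hMin{\Elements}$ from Definition~\ref{Mesh_uniform}. So the first bullet point of Definition~\ref{Mesh_loc_bd_card} is granted for free.

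For the second condition, I would invoke Lemma~\ref{Mesh_card} to obtain
\begin{equation*}
\cardN{\ElementsB} \cleq \( 1 + \frac{\diam[\Elements]{\ElementsB}}{\hMin{\ElementsB}} \)^d
\end{equation*}
for every cluster $\ElementsB \subseteq \Elements$, and then replace $\hMin{\ElementsB}$ by $\h{\ElementsB}$ up to a multiplicative constant. The key observation is that the global uniformity of $\Elements$ automatically transfers to every sub-cluster: from the chain $\h{\ElementsB} \leq \h{\Elements} \leq \CUnif \hMin{\Elements} \leq \CUnif \hMin{\ElementsB}$ one reads off $\hMin{\ElementsB} \cgeq \h{\ElementsB}$. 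Plugging this into the bound above and using the elementary inequality $(1 + Cx)^d \leq C^d (1 + x)^d$ (valid for $C \geq 1$) to absorb the constant into the $\cleq$ symbol yields $\cardN{\ElementsB} \cleq (1 + \diam[\Elements]{\ElementsB}/\h{\ElementsB})^{d}$, which is precisely the second bullet point of Definition~\ref{Mesh_loc_bd_card} with $d\CCard = d$. No genuine obstacle arises; the only point worth emphasizing is that uniformity is phrased as a global property of the mesh, so the local version $\hMin{\ElementsB} \ceq \h{\ElementsB}$ for arbitrary sub-clusters has to be extracted via the short chain above before Lemma~\ref{Mesh_card} can be applied in the desired form.
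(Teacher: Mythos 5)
Your proof is correct and follows exactly the route the paper intends: the paper simply states the lemma as an immediate consequence of Lemma~\ref{Mesh_card} combined with Definition~\ref{Mesh_uniform}, and your write-up fills in precisely that short chain (transferring global uniformity to sub-clusters via $\h{\ElementsB} \leq \h{\Elements} \leq \CUnif\hMin{\Elements} \leq \CUnif\hMin{\ElementsB}$ and absorbing $\CUnif$ into the implied constant).
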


\begin{definition}[Mesh graded towards $\Gamma$] \label{Mesh_graded}
Let $\Elements \subseteq \Pow{\Omega}$ be a mesh and $\Gamma \subseteq \R^d$  satisfy $\Gamma \subseteq \R^d \backslash T$ for all $T \in \Elements$. Furthermore, let $\alpha \geq 1$ be a \emph{grading exponent} and $H>0$ a \emph{coarse mesh width}. We say that $\Elements$ is \emph{graded towards $\Gamma$ with parameters $\alpha,H$}, if there holds 
\begin{equation*}
\forall T \in \Elements: \quad \quad \h{T} \ceq \dist[2]{x_T}{\Gamma}^{1-1/\alpha} H.
\end{equation*}

Here, $x_T$ denotes the incenter of the element $T$ and $\dist[2]{x_T}{\Gamma} = \inf_{\gamma \in \Gamma} \norm[2]{x_T-\gamma}$ is the Euclidean distance between a point and a set.
\end{definition}

The set $\Gamma$ towards which the mesh is graded is usually determined by the given problem. For example, reentrant corners of the domain $\Omega$ or regions of non-smoothness of the data may entail a reduced regularity of the  solution $u$ to the model problem from \cref{SSec_Model_problem}. This usually leads to reduced order of convergence of the finite element approximation on quasiuniform meshes. Choosing the set $\Gamma$ to contain all singularities of the solution as well as choosing the parameter $\alpha$ correctly, one can regain the optimal order of convergence. To a large extent, the shape of $\Gamma$ is irrelevant for our analysis. We only require that the mesh resolve $\Gamma$, i.e., the mesh can only be graded towards points/lines that are part of the mesh skeleton.

\begin{lemma} \label{Mesh_graded_card}
Let $\Elements \subseteq \Pow{\Omega}$ be a  mesh graded towards $\Gamma$ with parameters $\alpha,H$. Then, there hold the bounds $H^{\alpha} \cleq \hMin{\Elements} \leq \h{\Elements} \cleq H$. Furthermore, $\Elements$ has locally bounded cardinality with $\CCard = \alpha$.
\end{lemma}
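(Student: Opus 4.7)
The plan is to first establish a two-sided global bound on the local mesh widths, from which the first clause of locally bounded cardinality is immediate, and then to bound the cardinality of an arbitrary subcluster by splitting into cases based on whether the mesh is locally quasi-uniform on that cluster.

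For the global bound $H^{\alpha} \cleq \hMin{\Elements} \leq \h{\Elements} \cleq H$, the upper bound follows from $\dist[2]{x_T}{\Gamma} \cleq 1$ (which holds since $\Omega$ is bounded), so that the grading relation gives $\h{T} \ceq \dist[2]{x_T}{\Gamma}^{1-1/\alpha} H \cleq H$. For the lower bound, the key observation is that the incenter $x_T$ lies in the open simplex $T$ while $\Gamma \subseteq \R^d \setminus T$, so shape regularity produces $\dist[2]{x_T}{\Gamma} \geq \dist[2]{x_T}{\boundaryN T} \geq \CShape^{-1} \h{T}$; feeding this back into the grading relation yields $\h{T} \cgeq \h{T}^{1-1/\alpha} H$, hence $\h{T} \cgeq H^{\alpha}$. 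The first condition of \cref{Mesh_loc_bd_card} with $\CCard = \alpha$ then reduces to $\h{\Elements}^{\alpha} \cleq H^{\alpha} \cleq \hMin{\Elements}$.

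For the cardinality condition, I would fix a cluster $\ElementsB \subseteq \Elements$, pick elements $T_{\min}, T_{\max} \in \ElementsB$ such that $\h{T_{\min}} = \hMin{\ElementsB}$ and $\h{T_{\max}} = \h{\ElementsB}$, and write $D := \diam[\Elements]{\ElementsB}$. The triangle inequality gives $\dist[2]{x_{T_{\max}}}{\Gamma} \leq \dist[2]{x_{T_{\max}}}{x_{T_{\min}}} + \dist[2]{x_{T_{\min}}}{\Gamma} \leq D + \dist[2]{x_{T_{\min}}}{\Gamma}$, and combining this with the grading and the subadditivity $(a+b)^{1-1/\alpha} \leq a^{1-1/\alpha} + b^{1-1/\alpha}$ of concave powers on $[0,\infty)$ produces the local comparison
\begin{equation*}
\h{\ElementsB} \cleq D^{1-1/\alpha} H + \hMin{\ElementsB}.
\end{equation*}
If $\hMin{\ElementsB} \cgeq D^{1-1/\alpha} H$, then $\hMin{\ElementsB} \ceq \h{\ElementsB}$, and \cref{Mesh_card} immediately yields $\cardN{\ElementsB} \cleq (1 + D/\h{\ElementsB})^{d} \leq (1 + D/\h{\ElementsB})^{d\alpha}$. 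Otherwise $\h{\ElementsB} \cleq D^{1-1/\alpha} H$, so $D/\h{\ElementsB} \cgeq D^{1/\alpha}/H$, hence $(D/\h{\ElementsB})^{d\alpha} \cgeq (D/H^{\alpha})^{d}$; combining this with \cref{Mesh_card} and the universal floor $\hMin{\ElementsB} \geq \hMin{\Elements} \cgeq H^{\alpha}$ from the first step produces the chain
\begin{equation*}
\cardN{\ElementsB} \cleq (1 + D/\hMin{\ElementsB})^{d} \cleq 1 + (D/H^{\alpha})^{d} \cleq 1 + (D/\h{\ElementsB})^{d\alpha} \cleq (1 + D/\h{\ElementsB})^{d\alpha}.
\end{equation*}

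The main obstacle is precisely this second case, where $\ElementsB$ simultaneously contains very small elements near $\Gamma$ and comparatively large ones far from it, so that no local quasi-uniformity is available and \cref{Mesh_card} alone is too weak. The trick is to use the subadditivity of $r \mapsto r^{1-1/\alpha}$ to decouple the two length scales and then invoke the worst-case floor $\hMin{\Elements} \cgeq H^{\alpha}$ to absorb the grading into the exponent $\alpha$ appearing in the cardinality bound.
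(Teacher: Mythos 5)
Your proof is correct and follows essentially the same strategy as the paper's: first deduce $H^{\alpha}\cleq \hMin{\Elements}$ and $\h{\Elements}\cleq H$ from the grading relation together with the shape-regularity bound $\dist[2]{x_T}{\Gamma}\geq \CShape^{-1}\h{T}$, then establish the cardinality bound by distinguishing whether $\ElementsB$ is effectively quasi-uniform or not and invoking \cref{Mesh_card} plus the floor $\hMin{\Elements}\cgeq H^{\alpha}$ in the degenerate case. The only cosmetic difference is in how the dichotomy is set up: the paper fixes the element $B\in\ElementsB$ at minimal distance $b$ from $\Gamma$ and splits on $b\lessgtr\diam[\Elements]{\ElementsB}$, bounding $\h{\ElementsB}\cleq (b+\Delta b)^{1-1/\alpha}H$ directly, while you derive the intermediate inequality $\h{\ElementsB}\cleq D^{1-1/\alpha}H+\hMin{\ElementsB}$ via subadditivity of $r\mapsto r^{1-1/\alpha}$ and split on $\hMin{\ElementsB}\lessgtr D^{1-1/\alpha}H$. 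The two dichotomies are equivalent up to constants, and both cases lead to the bound $\cardN{\ElementsB}\cleq(1+D/\h{\ElementsB})^{d\alpha}$ by the same algebraic mechanism, so the underlying argument is the same.
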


\begin{proof}
We start with the bounds for $\h{\Elements}$ and $\hMin{\Elements}$: For every $T \in \Elements$, we know from \cref{Mesh} that $\Ball[2]{x_T}{\CShape^{-1} \h{T}} \subseteq T$. Combining this with the assumption $\Gamma \subseteq T^c$ from \cref{Mesh_graded} yields $\dist[2]{x_T}{\Gamma} \geq \h{T}/\CShape$. We conclude $\h{T} \ceq \dist[2]{x_T}{\Gamma}^{1-1/\alpha} H \cgeq \h{T}^{1-1/\alpha} H$ and ultimately $\hMin{\Elements} \cgeq H^{\alpha}$. On the other hand, we have the bound $\h{T} \ceq \dist[2]{x_T}{\Gamma}^{1-1/\alpha} H \leq \sup_{x \in \Omega} \dist[2]{x}{\Gamma}^{1-1/\alpha} H \cleq H$ and thus $\h{\Elements} \cleq H$.

It remains to prove the locally bounded cardinality: Let $\ElementsB \subseteq \Elements$ arbitrary. We fix some element $B \in \ElementsB$ with $b := \dist[2]{x_B}{\Gamma} = \min_{T \in \ElementsB} \dist[2]{x_T}{\Gamma}$ and abbreviate $\Delta b := \diam[\Elements]{\ElementsB}$. Note that there holds the bound $\h{\ElementsB} \ceq (\max_{T \in \ElementsB} \dist[2]{x_T}{\Gamma})^{1-1/\alpha} H \cleq (b + \Delta b)^{1-1/\alpha} H$.

In the case $b \leq \Delta b$ we have the lower bound
\begin{equation*}
\hMin{\ElementsB} \geq \hMin{\Elements} \cgeq H^{\alpha} \cgeq \frac{\h{\ElementsB}^{\alpha}}{(b+\Delta b)^{\alpha-1}} \geq \frac{\h{\ElementsB}^{\alpha}}{(2\Delta b)^{\alpha-1}}.
\end{equation*}

In the remaining case $b > \Delta b$ we get
\begin{equation*}
\hMin{\ElementsB} \ceq H \( \min_{T \in \ElementsB} \dist[2]{x_T}{\Gamma} \)^{1-1/\alpha} = H b^{1-1/\alpha} \cgeq \h{\ElementsB} \( \frac{b}{b+\Delta b} \)^{1-1/\alpha} \geq 2^{1/\alpha-1} \h{\ElementsB}.
\end{equation*}

In particular, both cases lead to the estimate
\begin{equation*}
\cardN{\ElementsB} \stackrel{\cref{Mesh_card}}{\cleq} \( 1 + \frac{\Delta b}{\hMin{\ElementsB}} \)^d \cleq \( 1 + \frac{\Delta b}{\h{\ElementsB}} \)^{d\alpha},
\end{equation*}
which concludes the proof.
\end{proof}

\subsection{Examples of dual functions} \label{SSec_Dual_functions}

In this subsection, we present a way to construct bases of $\SkpO{1}{p}{\Elements}$ that is common in the \emph{finite element} method. This scheme encompasses, in particular, the classic \emph{hat functions} $\phi_n \in \SkpO{1}{1}{\Elements}$ as well as their generalization to $p \geq 1$ (Lagrange elements). Then, we show explicitly how to find a dual system $\set{\lambda_1,\dots,\lambda_N} \subseteq \Lp{2}{\Omega}$ in the sense of \cref{Basis_fcts}.

Let $p \geq 1$, $L := \dimN{\Pp{p}{\refElement}}$ and $N := \dimN{\SkpO{1}{p}{\Elements}}$. Let $\set{\phi_1,\dots,\phi_N} \subseteq \SkpO{1}{p}{\Elements}$ be a basis such that:

\emph{1) Local supports:} For every $n \in \set{1,\dots,N}$, there exists an element $T_n \in \Elements$ such that $T_n \in \supp[\Elements]{\phi_n} \subseteq \Patch{\Elements}{T_n}$.

\emph{2) Simple structure:} There exists a basis of \emph{shape functions} $\set{\hat{\phi}_1,\dots,\hat{\phi}_L} \subseteq \Pp{p}{\refElement}$, which determines the shape of the basis elements. More precisely, for every $n \in \set{1,\dots,N}$ and every $T \in \supp[\Elements]{\phi_n}$, there exists an index $\ell(n,T) \in \set{1,\dots,L}$ such that $\restrictN{\phi_n}{T} = \hat{\phi}_{\ell(n,T)} \circ F_T^{-1}$.

\emph{3) Local distinctness:} The basis functions are \emph{locally distinct} in the following sense: For all $n \neq m \in \set{1,\dots,N}$ and all common $T \in \supp[\Elements]{\phi_n} \cap \supp[\Elements]{\phi_m}$, there holds $\ell(n,T) \neq \ell(m,T)$.

For each basis function $\phi_n$ we fix an element $T_n \in \Elements$ as in 1). Note that a standard scaling argument $T \leftrightarrow \refElement$ readily provides the following relation:
\begin{equation*}
\forall n \in \set{1,\dots,N}: \quad \quad \norm[\Lp{2}{\Omega}]{\phi_n} \ceq \h{T_n}^{d/2}.
\end{equation*}

Now, for the construction of the dual functions $\lambda_n \in \Lp{2}{\Omega}$, let $\set{\hat{\lambda}_1,\dots,\hat{\lambda}_L} \subseteq \Pp{p}{\refElement}$ be the unique set of \emph{dual shape functions}, i.e. $\skalar[\Lp{2}{\refElement}]{\hat{\phi}_{\ell}}{\hat{\lambda}_k} = \kronecker{\ell k}$ for all $\ell,k \in \set{1,\dots,L}$. Then, the \emph{dual function} $\lambda_n \in \Skp{0}{p}{\Elements} \subseteq \Lp{2}{\Omega}$ is defined in a piecewise manner: For every $T \neq T_n$, we set $\restrictN{\lambda_n}{T} := 0$, whereas
\begin{equation*}
\restrictN{\lambda_n}{T_n} := \abs{\detN{\gradN{F_{T_n}}}}^{-1} \* (\hat{\lambda}_{\ell(n,T_n)} \circ F_{T_n}^{-1}).
\end{equation*}

\begin{lemma} \label{Basis_fcts_dual_props}
For all $n,m \in \set{1,\dots,N}$ and $\mvemph{x} \in \R^N$, there holds
\begin{equation*}
\skalar[\Lp{2}{\Omega}]{\phi_n}{\lambda_m} = \kronecker{nm}, \quad \quad \quad \normB[\Lp{2}{\Omega}]{\sum_{m=1}^{N} \mvemph{x}_m \lambda_m} \cleq \hMin{\Elements}^{-d/2} \norm[2]{\mvemph{x}}.
\end{equation*}
\end{lemma}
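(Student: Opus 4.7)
The plan is to verify the two assertions separately, since they rely on different aspects of the construction. The key structural facts I will exploit are that $\supp[\Elements]{\lambda_m} = \set{T_m}$ by the piecewise definition of $\lambda_m$, that a pullback via $F_{T_m}$ converts the relevant integrals back into integrals over $\refElement$ involving $\hat\phi_{\ell(\*,T_m)}$ and $\hat\lambda_{\ell(\*,T_m)}$, and that at most $L$ indices $m$ can share the same anchor element $T_m$.

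For the biorthogonality claim, I would reduce the $\Lp{2}{\Omega}$ inner product to an integral over $T_m$ alone. If $T_m \notin \supp[\Elements]{\phi_n}$, then $\phi_n|_{T_m} \equiv 0$ and the integral vanishes; in this case property~1 prevents $n = m$ (otherwise $T_n = T_m \in \supp[\Elements]{\phi_n}$), so indeed $\kronecker{nm} = 0$. Otherwise, $\phi_n|_{T_m} = \hat\phi_{\ell(n,T_m)} \circ F_{T_m}^{-1}$, and the change of variables $T_m \leftrightarrow \refElement$ (which produces a Jacobian factor $\abs{\detN{\gradN{F_{T_m}}}}$ that cancels exactly the prefactor in the definition of $\lambda_m|_{T_m}$) yields
\begin{equation*}
\skalar[\Lp{2}{\Omega}]{\phi_n}{\lambda_m} = \skalar[\Lp{2}{\refElement}]{\hat\phi_{\ell(n,T_m)}}{\hat\lambda_{\ell(m,T_m)}} = \kronecker{\ell(n,T_m)\,\ell(m,T_m)}.
\end{equation*}
For $n = m$ the indices coincide and yield $1$; for $n \neq m$ property~3 (local distinctness) forces them to differ and yields $0$.

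For the stability bound, I would partition the index set according to the anchor elements: $I_T := \Set{m}{T_m = T}$. Property~3 shows $m \mapsto \ell(m,T_m)$ is injective on $I_T$, hence $\cardN{I_T} \leq L$, a constant depending only on $p$ and $d$. Since each $\lambda_m$ is supported on the single element $T_m$, the squared norm decouples as
\begin{equation*}
\normB[\Lp{2}{\Omega}]{\sum_{m=1}^{N} \mvemph{x}_m \lambda_m}^2 = \sum_{T \in \Elements} \normB[\Lp{2}{T}]{\sum_{m \in I_T} \mvemph{x}_m \lambda_m}^2.
\end{equation*}
A routine scaling argument using $\lambda_m|_{T_m} = \abs{\detN{\gradN{F_{T_m}}}}^{-1} (\hat\lambda_{\ell(m,T_m)} \circ F_{T_m}^{-1})$ together with $\abs{\detN{\gradN{F_{T_m}}}} \ceq \h{T_m}^d$ yields $\norm[\Lp{2}{T}]{\lambda_m} \ceq \h{T}^{-d/2}$ for $m \in I_T$. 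Applying the discrete Cauchy-Schwarz inequality to the inner sum (which has at most $L$ terms), summing over $T$, bounding $\h{T}^{-d} \leq \hMin{\Elements}^{-d}$, and using that the $I_T$ partition $\set{1,\dots,N}$ gives
\begin{equation*}
\normB[\Lp{2}{\Omega}]{\sum_{m=1}^{N} \mvemph{x}_m \lambda_m}^2 \cleq \hMin{\Elements}^{-d} \sum_{T \in \Elements} \sum_{m \in I_T} \abs{\mvemph{x}_m}^2 = \hMin{\Elements}^{-d} \norm[2]{\mvemph{x}}^2.
\end{equation*}
Taking square roots finishes the stability estimate.

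I do not expect any genuine obstacle here; both properties amount to careful bookkeeping combined with one standard scaling argument. The only subtle point is the uniform bound $\cardN{I_T} \leq L$, which rests on the local distinctness hypothesis (property~3) rather than on mesh regularity, and which is precisely what allows the constant $L$ to be absorbed into the $\cleq$.
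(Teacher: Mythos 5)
Your proof is correct and tracks the paper's argument closely: the biorthogonality step is identical (restrict the inner product to $T_m$, pull back to $\refElement$ so the Jacobian of $F_{T_m}$ cancels the prefactor in $\lambda_m$, and invoke local distinctness to turn $\kronecker{\ell(n,T_m)\,\ell(m,T_m)}$ into $\kronecker{nm}$), and the stability step uses the same element-wise decomposition, the same scaling $\norm[\Lp{2}{T_m}]{\lambda_m}\ceq\h{T_m}^{-d/2}$, and the same Cauchy--Schwarz on each $T$. The one place you take a slightly different route is in bounding the number of indices anchored at a fixed element $T$. The paper observes that the biorthogonality just established forces $\set{\lambda_1,\dots,\lambda_N}$ to be linearly independent, so at most $L=\dimN{\Pp{p}{\refElement}}$ of them can be supported inside a single element. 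You instead derive $\cardN{I_T}\leq L$ directly from Properties~1 and~3: each $m\in I_T$ has $T=T_m\in\supp[\Elements]{\phi_m}$, and local distinctness then makes $m\mapsto\ell(m,T)$ injective on $I_T$. Both arguments give the same constant (depending only on $d$ and $p$); yours is a little more self-contained since it does not rely on the first half of the lemma, though the two are of course closely related.
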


\begin{proof}
Let $n,m \in \set{1,\dots,N}$. If $T_m \notin \supp[\Elements]{\phi_n}$, we have $m \neq n$ and therefore $\skalar[\Lp{2}{\Omega}]{\phi_n}{\lambda_m} = 0 = \kronecker{nm}$. In the remaining case $T_m \in \supp[\Elements]{\phi_n}$ we get
\begin{equation*}
\skalar[\Lp{2}{\Omega}]{\phi_n}{\lambda_m} = \skalar[\Lp{2}{T_m}]{\phi_n}{\lambda_m} = \skalar[\Lp{2}{\refElement}]{\hat{\phi}_{\ell(n,T_m)}}{\hat{\lambda}_{\ell(m,T_m)}} = \kronecker{\ell(n,T_m) \ell(m,T_m)} = \kronecker{nm}.
\end{equation*}

Next, recall that $\meas{T} \ceq \h{T}^d$ for every element $T$ in a shape-regular mesh $\Elements$. For all $m \in \set{1,\dots,N}$, we compute  
\begin{equation*}
\norm[\Lp{2}{\Omega}]{\lambda_m} = \abs{\detN{\gradN{F_{T_m}}}}^{-1} \norm[\Lp{2}{T_m}]{\hat{\lambda}_{\ell(m,T_m)} \circ F_{T_m}^{-1}} = \meas{\refElement}^{1/2} \meas{T_m}^{-1/2} \norm[\Lp{2}{\refElement}]{\hat{\lambda}_{\ell(m,T_m)}} \ceq \h{T_m}^{-d/2}.
\end{equation*}

Finally, for every $T \in \Elements$, we consider the indices $ms(T) := \Set{m}{T_m = T}$. Due to the duality formula from above, the system $\set{\lambda_1,\dots,\lambda_N} \subseteq \Skp{0}{p}{\Elements}$ is linearly independent. In particular, there must hold $\cardN{ms(T)} \cleq 1$. Now, for every $\mvemph{x} \in \R^N$ and every $T \in \Elements$, we obtain
\begin{equation*}
\normB[\Lp{2}{T}]{\sum_{m=1}^{N} \mvemph{x}_m \lambda_m}^2 = \normB[\Lp{2}{T}]{\sum_{m \in ms(T)} \mvemph{x}_m \lambda_m}^2 \leq \( \sum_{m \in ms(T)} \norm[\Lp{2}{\Omega}]{\lambda_m}^2 \) \( \sum_{m \in ms(T)} \mvemph{x}_m^2 \) \cleq \h{T}^{-d} \sum_{m \in ms(T)} \mvemph{x}_m^2.
\end{equation*}

Summing over all elements $T \in \Elements$ then gives the asserted global stability bound. This concludes the proof.
\end{proof}

\subsection{A representation formula for the inverse system matrix} \label{SSec_Rep_formula}

In this subsection, we develop a representation formula for $\mvemph{A}^{-1}$ in terms of three linear operators: Recall that $\mvemph{A}^{-1}$ represents the action of solving the discrete model problem, so there must be a fundamental connection to the \emph{discrete solution operator} $\fDef{\SolutionOp{\Elements}}{\Lp{2}{\Omega}}{\SkpO{1}{p}{\Elements}}$. Additionally, we need a way to turn coefficient vectors $\mvemph{f} \in \R^N$ into functions $f \in \Lp{2}{\Omega}$ that can be plugged into $S_{\Elements}$. For this purpose, we can use the dual functions $\lambda_n \in \Lp{2}{\Omega}$ from \cref{Basis_fcts} and the corresponding \emph{coordinate mapping} $\fDef{\Lambda}{\R^N}{\Lp{2}{\Omega}}$. Finally, the image $S_{\Elements} \Lambda \mvemph{f} \in \SkpO{1}{p}{\Elements}$ must be converted back to a vector in $\R^N$. A straightforward approach would be to use the inverse $\Phi^{-1}$ of the \emph{coordinate mapping} $\fDef{\Phi}{\R^N}{\SkpO{1}{p}{\Elements}}$ associated with the basis functions $\phi_n \in \SkpO{1}{p}{\Elements}$. But, as it turns out, it is advantageous to use the Hilbert space transpose $\fDef{\Lambda^T}{\Lp{2}{\Omega}}{\R^N}$ instead.

First, let us recall the following classic result:

\begin{lemma} \label{Bilinear_form_props}
The bilinear form $a$ from \cref{Bilinear_form} is coercive and continuous:
\begin{equation*}
\forall u,v \in \HkO{1}{\Omega}: \quad \quad \norm[\Hk{1}{\Omega}]{u}^2 \cleq \bilinear[a]{u}{u}, \quad \quad \bilinear[a]{u}{v} \cleq \norm[\Hk{1}{\Omega}]{u} \norm[\Hk{1}{\Omega}]{v}.
\end{equation*}
\end{lemma}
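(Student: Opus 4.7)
The result is entirely classical, so the plan is to reduce both claims to routine applications of Cauchy--Schwarz, Hölder, and Poincaré's inequality, with the delicate balance between the convection/reaction bounds and the ellipticity of $a_1$ being controlled by the standing assumption $\alpha_1 > \CPcre^2(\norm[\Lp{\infty}{\Omega}]{a_2} + \norm[\Lp{\infty}{\Omega}]{a_3})$.

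\textbf{Continuity.} For $u,v \in \HkO{1}{\Omega}$, apply Cauchy--Schwarz in $\Lp{2}{\Omega}$ to each of the three terms defining $\bilinear[a]{u}{v}$, then bound the coefficients by their $\Lp{\infty}{\Omega}$-norms. Using $\norm[\Lp{2}{\Omega}]{\gradN{u}} \leq \seminorm[\Hk{1}{\Omega}]{u} \leq \norm[\Hk{1}{\Omega}]{u}$ and $\norm[\Lp{2}{\Omega}]{u} \leq \norm[\Hk{1}{\Omega}]{u}$, every term is bounded by $\norm[\Hk{1}{\Omega}]{u} \norm[\Hk{1}{\Omega}]{v}$ times a constant depending only on the $\Lp{\infty}{\Omega}$-norms of the coefficients; this gives the asserted continuity bound.

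\textbf{Coercivity.} Set $v = u$. The ellipticity of $a_1$ (pointwise applied to $y = \gradN{u}(x)$ and integrated over $\Omega$) provides the main positive contribution $\skalar[\Lp{2}{\Omega}]{a_1 \gradN{u}}{\gradN{u}} \geq \alpha_1 \seminorm[\Hk{1}{\Omega}]{u}^2$. For the convection term, I would use Cauchy--Schwarz to bound $|\skalar[\Lp{2}{\Omega}]{a_2 \cdot \gradN{u}}{u}| \leq \norm[\Lp{\infty}{\Omega}]{a_2} \norm[\Lp{2}{\Omega}]{\gradN{u}} \norm[\Lp{2}{\Omega}]{u}$, then apply Poincaré in the form $\norm[\Lp{2}{\Omega}]{u} \leq \CPcre \seminorm[\Hk{1}{\Omega}]{u}$ to each factor (or to one factor only, depending on how one wants to distribute the $\CPcre$'s). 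For the reaction term, bound $|\skalar[\Lp{2}{\Omega}]{a_3 u}{u}| \leq \norm[\Lp{\infty}{\Omega}]{a_3} \norm[\Lp{2}{\Omega}]{u}^2 \leq \CPcre^2 \norm[\Lp{\infty}{\Omega}]{a_3} \seminorm[\Hk{1}{\Omega}]{u}^2$. Collecting terms yields
\begin{equation*}
\bilinear[a]{u}{u} \geq \( \alpha_1 - \CPcre \norm[\Lp{\infty}{\Omega}]{a_2} - \CPcre^2 \norm[\Lp{\infty}{\Omega}]{a_3} \) \seminorm[\Hk{1}{\Omega}]{u}^2.
\end{equation*}

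\textbf{Closing the estimate.} Since the Poincaré inequality $\norm[\Hk{1}{\Omega}]{\cdot} \leq \CPcre \seminorm[\Hk{1}{\Omega}]{\cdot}$ forces $\CPcre \geq 1$, the monomial $\CPcre \norm[\Lp{\infty}{\Omega}]{a_2}$ is dominated by $\CPcre^2 \norm[\Lp{\infty}{\Omega}]{a_2}$, and therefore the bracketed constant is bounded below by $\alpha_1 - \CPcre^2 (\norm[\Lp{\infty}{\Omega}]{a_2} + \norm[\Lp{\infty}{\Omega}]{a_3})$, which is strictly positive by hypothesis. A second application of Poincaré converts the resulting $\seminorm[\Hk{1}{\Omega}]{u}^2$-lower bound into a $\norm[\Hk{1}{\Omega}]{u}^2$-lower bound (at the cost of another $\CPcre^2$), finishing the coercivity claim. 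There is no genuine obstacle here, only bookkeeping; the one point requiring care is the choice of how Poincaré is distributed over the convection term, since a naive split with Young's inequality would force one to track an additional parameter $\epsilon$, whereas the direct estimate above works immediately thanks to $\CPcre \geq 1$.
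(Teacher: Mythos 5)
Your argument is correct and is the standard textbook derivation of continuity and coercivity for a second-order elliptic form with lower-order terms. The paper itself gives no proof of \cref{Bilinear_form_props} — it is stated as a recalled ``classic result'' — so there is nothing to compare against; your use of Cauchy--Schwarz for continuity, the pointwise ellipticity of $a_1$ plus Poincaré for the convection and reaction terms, the observation $\CPcre \geq 1$ to fold $\CPcre \norm[\Lp{\infty}{\Omega}]{a_2}$ into $\CPcre^2 \norm[\Lp{\infty}{\Omega}]{a_2}$, and the final conversion $\seminorm[\Hk{1}{\Omega}]{u}^2 \geq \CPcre^{-2}\norm[\Hk{1}{\Omega}]{u}^2$ are all exactly what the authors' standing hypothesis $\alpha_1 > \CPcre^2(\norm[\Lp{\infty}{\Omega}]{a_2} + \norm[\Lp{\infty}{\Omega}]{a_3})$ is designed to support.
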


The precise definitions of $\SolutionOp{\Elements}$, $\Phi$, and $\Lambda$ are given in the following \cref{Sol_op_disc}.

\begin{definition} \label{Sol_op_disc}
Let $\fDef{a}{\HkO{1}{\Omega} \times \HkO{1}{\Omega}}{\R}$ the bilinear form from \cref{Bilinear_form}. For every $f \in \Lp{2}{\Omega}$, denote by $\SolutionOp{\Elements} f \in \SkpO{1}{p}{\Elements}$ the unique function satisfying the variational equality
\begin{equation*}
\forall v \in \SkpO{1}{p}{\Elements}: \quad \quad \bilinear[a]{\SolutionOp{\Elements} f}{v} = \skalar[\Lp{2}{\Omega}]{f}{v}.
\end{equation*}

The linear mapping $\fDef{\SolutionOp{\Elements}}{\Lp{2}{\Omega}}{\SkpO{1}{p}{\Elements}}$ is called \emph{discrete solution operator}.
\end{definition}

Recall from \cref{SSec_System_matrix} that existence and uniqueness of $\SolutionOp{\Elements} f$ are provided by the Lax-Milgram Lemma. Additionally, there holds the a priori bound $\norm[\Hk{1}{\Omega}]{\SolutionOp{\Elements} f} \cleq \norm[\Lp{2}{\Omega}]{f}$.

\begin{definition} \label{Coord_mappings}
Let $\set{\phi_1,\dots,\phi_N} \subseteq \SkpO{1}{p}{\Elements}$ be a basis and $\set{\lambda_1,\dots,\lambda_N} \subseteq \Lp{2}{\Omega}$ a dual system compliant with \cref{Basis_fcts}. We denote the corresponding coordinate mappings by
\begin{equation*}
\fDefB[\Phi]{\R^N}{\SkpO{1}{p}{\Elements}}{\mvemph{x}}{\sum_{n=1}^{N} \mvemph{x}_n \phi_n}, \quad \quad \quad \fDefB[\Lambda]{\R^N}{\Lp{2}{\Omega}}{\mvemph{x}}{\sum_{n=1}^{N} \mvemph{x}_n \lambda_n}.
\end{equation*}
\end{definition}

We summarize the most important properties of $\Phi$ and $\Lambda$ in the following lemma. As usual, we use the notation $\supp{\mvemph{x}} := \Set{n \in \set{1,\dots,N}}{\mvemph{x}_n \neq 0}$ for the \emph{support} of a vector $\mvemph{x} \in \R^N$. Furthermore, recall from \cref{Basis_fcts_Patch} the notation $\Patch{\Elements}{I} \subseteq \Elements$ for all abstract matrix index sets $I \subseteq \set{1,\dots,N}$.

\begin{lemma} \label{Coord_mappings_props}
The Hilbert space transpose of $\Lambda$ is given by the operator
\begin{equation*}
\fDefB[\Lambda^T]{\Lp{2}{\Omega}}{\R^N}{v}{(\skalar[\Lp{2}{\Omega}]{v}{\lambda_n})_{n=1}^{N}}.
\end{equation*}

The restriction of $\Lambda^T$ to the subspace $\SkpO{1}{p}{\Elements} \subseteq \Lp{2}{\Omega}$ coincides with the inverse mapping $\Phi^{-1}$. More precisely, for all $\mvemph{x}, \mvemph{y} \in \R^N$ and all $v \in \SkpO{1}{p}{\Elements}$, there hold the  duality/inversion formulae
\begin{equation*}
\skalar[\Lp{2}{\Omega}]{\Phi\mvemph{x}}{\Lambda\mvemph{y}} = \skalar[2]{\mvemph{x}}{\mvemph{y}}, \quad \quad \quad \Lambda^T \Phi \mvemph{x} = \mvemph{x}, \quad \quad \quad \Phi \Lambda^T v = v.
\end{equation*}

Both $\Lambda$ and $\Lambda^T$ preserve locality: For all $\mvemph{x} \in \R^N$, $v \in \Lp{2}{\Omega}$ and $I \subseteq \set{1,\dots,N}$, we have
\begin{equation*}
\supp[\Elements]{\Lambda \mvemph{x}} \subseteq \Patch{\Elements}{\supp{\mvemph{x}}}, \quad \quad \quad \norm[\lp{2}{I}]{\Lambda^T v} \leq \norm{\Lambda} \norm[\Lp{2}{\Patch{\Elements}{I}}]{v}.
\end{equation*}
\end{lemma}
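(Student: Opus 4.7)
The plan is to verify each claim one by one; nothing here is deep, and each assertion reduces to unwinding the biorthogonality relation from \cref{Basis_fcts}, namely $\skalar[\Lp{2}{\Omega}]{\phi_n}{\lambda_m} = \kronecker{nm}$.

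\textbf{Transpose formula.} I would start by expanding $\skalar[\Lp{2}{\Omega}]{\Lambda \mvemph{x}}{v}$ by linearity and reading off the coefficients: $\sum_n \mvemph{x}_n \skalar[\Lp{2}{\Omega}]{\lambda_n}{v} = \skalar[2]{\mvemph{x}}{(\skalar[\Lp{2}{\Omega}]{v}{\lambda_n})_n}$ for every $\mvemph{x}\in\R^N$, which identifies $\Lambda^T v$ as claimed.

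\textbf{Duality and inversion.} For the pairing, I would expand both sums and use biorthogonality: $\skalar[\Lp{2}{\Omega}]{\Phi\mvemph{x}}{\Lambda\mvemph{y}} = \sum_{n,m} \mvemph{x}_n \mvemph{y}_m \kronecker{nm} = \skalar[2]{\mvemph{x}}{\mvemph{y}}$. For $\Lambda^T\Phi\mvemph{x}=\mvemph{x}$, the $n$-th component of the left-hand side is $\skalar[\Lp{2}{\Omega}]{\Phi\mvemph{x}}{\lambda_n} = \sum_m \mvemph{x}_m\kronecker{mn} = \mvemph{x}_n$. For $\Phi\Lambda^T v = v$ for $v\in\SkpO{1}{p}{\Elements}$, I would note that $\Phi$ is a bijection onto $\SkpO{1}{p}{\Elements}$ (basis property), write $v=\Phi\mvemph{z}$, and then $\Phi\Lambda^T v = \Phi(\Lambda^T\Phi\mvemph{z}) = \Phi\mvemph{z} = v$ by the previous identity.

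\textbf{Locality.} The support statement is immediate: $\Lambda\mvemph{x} = \sum_{n\in\supp{\mvemph{x}}} \mvemph{x}_n\lambda_n$, so $\supp[\Elements]{\Lambda\mvemph{x}} \subseteq \bigcup_{n\in\supp{\mvemph{x}}} \supp[\Elements]{\lambda_n} = \Patch{\Elements}{\supp{\mvemph{x}}}$ by \cref{Basis_fcts_Patch}. For the $\ell^2$-stability bound, I would use duality on $\lp{2}{I}$: pick $\mvemph{z}\in\R^N$ with $\supp{\mvemph{z}}\subseteq I$ and $\norm[2]{\mvemph{z}}\leq 1$ that attains $\norm[\lp{2}{I}]{\Lambda^T v}$, and compute
\begin{equation*}
\norm[\lp{2}{I}]{\Lambda^T v} = \skalar[2]{\Lambda^T v}{\mvemph{z}} = \skalar[\Lp{2}{\Omega}]{v}{\Lambda\mvemph{z}}.
\end{equation*}
Since $\supp[\Elements]{\Lambda\mvemph{z}}\subseteq \Patch{\Elements}{\supp{\mvemph{z}}} \subseteq \Patch{\Elements}{I}$ by the previous step, the $\Lp{2}$-pairing may be restricted to $\Patch{\Elements}{I}$. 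Cauchy--Schwarz then gives $\skalar[\Lp{2}{\Omega}]{v}{\Lambda\mvemph{z}} \leq \norm[\Lp{2}{\Patch{\Elements}{I}}]{v}\,\norm[\Lp{2}{\Omega}]{\Lambda\mvemph{z}} \leq \norm{\Lambda}\,\norm[\Lp{2}{\Patch{\Elements}{I}}]{v}$, which is the asserted bound.

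\textbf{Main obstacle.} None of the individual steps is hard; the only place where some care is needed is the locality/stability estimate, where one has to recognize that the cleanest route is via $\lp{2}{I}$-duality combined with the support containment, rather than attempting to bound the individual summands $\abs{\skalar[\Lp{2}{\Omega}]{v}{\lambda_n}}$ directly (which would incur a loss from the sum over $n\in I$).
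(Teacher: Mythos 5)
Your proof is correct and follows essentially the same route as the paper for every part except the final stability bound, where you take a slightly different (but equivalent) tack: you use $\ell^2(I)$-duality combined with the support containment $\supp[\Elements]{\Lambda\mvemph{z}}\subseteq\Patch{\Elements}{I}$, whereas the paper multiplies $v$ by a discontinuous characteristic function $\CutoffFc{}{}$ of $\Patch{\Elements}{I}$ and uses $\norm[\lp{2}{I}]{\Lambda^T v}=\norm[\lp{2}{I}]{\Lambda^T(\CutoffFc{}{}v)}\leq\norm{\Lambda^T}\norm[\Lp{2}{\Omega}]{\CutoffFc{}{}v}$. Both arguments rest on the same observation — that the dual functions $\lambda_n$ with $n\in I$ are supported in $\Patch{\Elements}{I}$ — and are essentially the transpose of each other, so this is a cosmetic rather than substantive difference.
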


\begin{proof}
The operator $\Lambda^T$ is indeed the Hilbert space transpose of $\Lambda$: For all $v \in \Lp{2}{\Omega}$ and $\mvemph{x} \in \R^N$, we compute
\begin{equation*}
\skalar[2]{\Lambda^T v}{\mvemph{x}} = \sum_{n=1}^{N} \skalar[\Lp{2}{\Omega}]{v}{\lambda_n} \mvemph{x}_n = \skalarB[\Lp{2}{\Omega}]{v}{\sum_{n=1}^{N} \mvemph{x}_n \lambda_n} = \skalar[\Lp{2}{\Omega}]{v}{\Lambda \mvemph{x}}.
\end{equation*}

The duality formula is a direct consequence of the duality property $\skalar[\Lp{2}{\Omega}]{\phi_n}{\lambda_m} = \kronecker{nm}$ from \cref{Basis_fcts}: For all $\mvemph{x}, \mvemph{y} \in \R^N$, we have
\begin{equation*}
\skalar[\Lp{2}{\Omega}]{\Phi\mvemph{x}}{\Lambda\mvemph{y}} = \sum_{n,m=1}^{N} \mvemph{x}_n \mvemph{y}_m \skalar[\Lp{2}{\Omega}]{\phi_n}{\lambda_m} = \sum_{n=1}^{N} \mvemph{x}_n \mvemph{y}_n = \skalar[2]{\mvemph{x}}{\mvemph{y}}.
\end{equation*}

From this, we immediately get the inversion formula $\Lambda^T \Phi \mvemph{x} = \mvemph{x}$ as well. On the other hand, for every $v \in \SkpO{1}{p}{\Elements}$, there holds $\Phi \Lambda^T v = \Phi \Lambda^T \Phi \Phi^{-1} v = \Phi \Phi^{-1} v = v$.

Next, we turn our attention to the preservation of locality by $\Lambda$:
\begin{equation*}
\forall \mvemph{x} \in \R^N: \quad \quad \supp[\Elements]{\Lambda \mvemph{x}} = \suppB[\Elements]{\sum_{n \in \supp{\mvemph{x}}} \mvemph{x}_n \lambda_n} \subseteq \bigcup_{n \in \supp{\mvemph{x}}} \supp[\Elements]{\lambda_n} \stackrel{\cref{Basis_fcts_Patch}}{=} \Patch{\Elements}{\supp{\mvemph{x}}}.
\end{equation*}

Finally, let $v \in \Lp{2}{\Omega}$ and $I \subseteq \set{1,\dots,N}$. Let $\CutoffFc{}{} \in \Lp{\infty}{\Omega}$ be a (discontinuous) cut-off function with $\restrictN{\CutoffFc{}{}}{\Patch{\Elements}{I}} \equiv 1$ and $\restrictN{\CutoffFc{}{}}{\Elements\backslash\Patch{\Elements}{I}} \equiv 0$. Then,
\begin{equation*}
\norm[\lp{2}{I}]{\Lambda^T v} = \norm[\lp{2}{I}]{\Lambda^T(\CutoffFc{}{} v)} \leq \norm[2]{\Lambda^T(\CutoffFc{}{} v)} \leq \norm{\Lambda^T} \norm[\Lp{2}{\Omega}]{\CutoffFc{}{} v} = \norm{\Lambda} \norm[\Lp{2}{\Patch{\Elements}{I}}]{v},
\end{equation*}
which finishes the proof.
\end{proof}

\begin{lemma} \label{System_matrix_Rep_formula}
The system matrix $\mvemph{A} \in \R^{N \times N}$ from \cref{System_matrix}, the discrete solution operator $\fDef{\SolutionOp{\Elements}}{\Lp{2}{\Omega}}{\SkpO{1}{p}{\Elements}}$ from \cref{Sol_op_disc}, and the coordinate mapping $\fDef{\Lambda}{\R^N}{\Lp{2}{\Omega}}$ from \cref{Coord_mappings} are related via the  representation formula
\begin{equation*}
\forall \mvemph{f} \in \R^N: \quad \quad \mvemph{A}^{-1} \mvemph{f} = \Lambda^T \SolutionOp{\Elements} \Lambda \mvemph{f}.
\end{equation*}
\end{lemma}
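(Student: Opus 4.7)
The plan is to verify the identity by computing $\mvemph{A} \mvemph{x}$ for the candidate vector $\mvemph{x} := \Lambda^T \SolutionOp{\Elements} \Lambda \mvemph{f}$ and checking that it equals $\mvemph{f}$ componentwise; since $\mvemph{A}$ is invertible (this was already noted after \cref{System_matrix}), this suffices to conclude $\mvemph{x} = \mvemph{A}^{-1} \mvemph{f}$. The argument is a short chain of definitions and the duality/inversion formulae collected in \cref{Coord_mappings_props}.

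First I would abbreviate $u := \SolutionOp{\Elements} \Lambda \mvemph{f} \in \SkpO{1}{p}{\Elements}$, so that $\mvemph{x} = \Lambda^T u$. Because $u$ lies in the discrete subspace $\SkpO{1}{p}{\Elements}$, the inversion formula $\Phi \Lambda^T v = v$ for $v \in \SkpO{1}{p}{\Elements}$ from \cref{Coord_mappings_props} gives $\Phi \mvemph{x} = u$. This is the pivotal observation — outside of $\SkpO{1}{p}{\Elements}$ the map $\Lambda^T$ is merely the Hilbert-space transpose of $\Lambda$ and not a left inverse of $\Phi$, but applying $\SolutionOp{\Elements}$ first places us exactly in the subspace where the identity works.

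Next, I would unfold $\mvemph{A} \mvemph{x}$ entrywise using \cref{System_matrix} together with linearity of $a$ in its first argument:
\[
(\mvemph{A} \mvemph{x})_m = \sum_{n=1}^N a(\phi_n, \phi_m) \mvemph{x}_n = a(\Phi \mvemph{x}, \phi_m) = a(u, \phi_m).
\]
Since $\phi_m \in \SkpO{1}{p}{\Elements}$, the defining variational equation of the discrete solution operator in \cref{Sol_op_disc} applied to $v = \phi_m$ and $f = \Lambda \mvemph{f}$ yields
\[
a(u, \phi_m) = \skalar[\Lp{2}{\Omega}]{\Lambda \mvemph{f}}{\phi_m}.
\]

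Finally, I would write $\phi_m = \Phi \mvemph{e}_m$ with $\mvemph{e}_m$ the $m$-th standard basis vector of $\R^N$ and invoke the duality formula $\skalar[\Lp{2}{\Omega}]{\Phi \mvemph{y}}{\Lambda \mvemph{z}} = \skalar[2]{\mvemph{y}}{\mvemph{z}}$ from \cref{Coord_mappings_props} (using symmetry of the $L^2$ inner product) to obtain
\[
\skalar[\Lp{2}{\Omega}]{\Lambda \mvemph{f}}{\Phi \mvemph{e}_m} = \skalar[2]{\mvemph{e}_m}{\mvemph{f}} = \mvemph{f}_m.
\]
Collecting these identities gives $(\mvemph{A} \mvemph{x})_m = \mvemph{f}_m$ for every $m$, hence $\mvemph{A} \mvemph{x} = \mvemph{f}$ and thus $\mvemph{A}^{-1} \mvemph{f} = \mvemph{x} = \Lambda^T \SolutionOp{\Elements} \Lambda \mvemph{f}$.

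There is no real obstacle in this proof; it is a purely formal consequence of results already established. The only point requiring care is the order of operations noted above — that the composition $\SolutionOp{\Elements} \Lambda$ must precede $\Lambda^T$ so that the inversion identity $\Phi \Lambda^T = \identity$ is legitimately available on the relevant argument.
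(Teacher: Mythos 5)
Your proof is correct and follows essentially the same route as the paper: both reduce $\mvemph{A}\mvemph{x}$ for $\mvemph{x}=\Lambda^T\SolutionOp{\Elements}\Lambda\mvemph{f}$ via the identity $\skalar[2]{\mvemph{A}\mvemph{x}}{\mvemph{y}}=\bilinear[a]{\Phi\mvemph{x}}{\Phi\mvemph{y}}$, apply the inversion formula $\Phi\Lambda^T=\identity$ on $\SkpO{1}{p}{\Elements}$, use the variational equation of $\SolutionOp{\Elements}$, and close with the duality formula. The only cosmetic difference is that you test against standard basis vectors $\mvemph{e}_m$ whereas the paper tests against arbitrary $\mvemph{y}\in\R^N$, which amounts to the same computation.
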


\begin{proof}
First, we establish a relationship between $\mvemph{A}$ and $a$ by means of the coordinate mapping $\Phi$:
\begin{equation*}
\forall \mvemph{x}, \mvemph{y} \in \R^N: \quad \quad \skalar[2]{\mvemph{A} \mvemph{x}}{\mvemph{y}} \stackrel{\cref{System_matrix}}{=} \sum_{n,m=1}^{N} \bilinear[a]{\phi_n}{\phi_m} \mvemph{x}_n \mvemph{y}_m \stackrel{\cref{Coord_mappings}}{=} \bilinear[a]{\Phi \mvemph{x}}{\Phi \mvemph{y}}.
\end{equation*}

Now, using the duality and inversion formulae from \cref{Coord_mappings_props}, we get
\begin{equation*}
\forall \mvemph{f},\mvemph{y} \in \R^N: \quad \skalar[2]{\mvemph{A} \Lambda^T \SolutionOp{\Elements} \Lambda \mvemph{f}}{\mvemph{y}} = \bilinear[a]{\Phi \Lambda^T \SolutionOp{\Elements} \Lambda \mvemph{f}}{\Phi\mvemph{y}} = \bilinear[a]{\SolutionOp{\Elements} \Lambda \mvemph{f}}{\Phi\mvemph{y}} \stackrel{\cref{Sol_op_disc}}{=} \skalar[\Lp{2}{\Omega}]{\Lambda \mvemph{f}}{\Phi\mvemph{y}} = \skalar[2]{\mvemph{f}}{\mvemph{y}}.
\end{equation*}

This readily implies the stated representation formula.
\end{proof}

\subsection{Reduction from matrix level to function level} \label{SSec_Red_mat_func}

In this subsection, we rephrase the original \emph{matrix} approximation problem as a \emph{function} approximation problem. This will get rid of abstract matrix indices $I \subseteq \set{1,\dots,N}$ in favor of element clusters $\ElementsB \subseteq \Elements$. The following lemma facilitates a reduction from the full matrix to the individual matrix blocks.

\begin{lemma} \label{Block_partition_matrix_norm}
Let $\BPart \subseteq \Pow{\set{1,\dots,N}} \times \Pow{\set{1,\dots,N}}$ be a block partition. Then there holds the estimate
\begin{equation*}
\forall \mvemph{B} \in \R^{N \times N}: \quad \quad \norm[2]{\mvemph{B}} \leq N^2 \* \max_{(I,J) \in \BPart} \norm[2]{\restrictN{\mvemph{B}}{I \times J}}.
\end{equation*}
\end{lemma}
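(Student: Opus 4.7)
The natural approach is to decompose $\mvemph{B}$ as a sum over the blocks of the partition, apply the triangle inequality for the spectral norm, and then use a cardinality bound on $\BPart$. Concretely, for each block $(I,J) \in \BPart$, I would define the zero-padded matrix $\mvemph{B}^{(I,J)} \in \R^{N \times N}$ whose entries equal those of $\mvemph{B}$ on $I \times J$ and vanish elsewhere. Because $\BPart$ is a (disjoint) partition of $\set{1,\dots,N} \times \set{1,\dots,N}$, we have the identity $\mvemph{B} = \sum_{(I,J) \in \BPart} \mvemph{B}^{(I,J)}$.

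Next, the triangle inequality for $\norm[2]{\cdot}$ gives
\begin{equation*}
\norm[2]{\mvemph{B}} \leq \sum_{(I,J) \in \BPart} \norm[2]{\mvemph{B}^{(I,J)}} \leq \cardN{\BPart} \* \max_{(I,J) \in \BPart} \norm[2]{\mvemph{B}^{(I,J)}}.
\end{equation*}
The key observation is that zero-padding does not change the spectral norm, i.e., $\norm[2]{\mvemph{B}^{(I,J)}} = \norm[2]{\restrictN{\mvemph{B}}{I \times J}}$. This follows because the nonzero singular values of a matrix are invariant under augmentation by zero rows or columns, as one sees directly from $\mvemph{B}^{(I,J)} (\mvemph{B}^{(I,J)})^T$ having the same nonzero eigenvalues as $\restrictN{\mvemph{B}}{I \times J} (\restrictN{\mvemph{B}}{I \times J})^T$ (embedded into the bigger matrix).

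Finally, one has to bound $\cardN{\BPart}$. Since the product sets $I \times J$ with $(I,J) \in \BPart$ are pairwise disjoint and all contained in $\set{1,\dots,N} \times \set{1,\dots,N}$, and each nonempty block contributes at least one pair of indices, we obtain $\cardN{\BPart} \leq N^2$. Combining this with the previous two observations yields the asserted bound.

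There is no substantial obstacle here; the only minor subtlety is the invariance of the spectral norm under zero-padding, which is a textbook fact. The factor $N^2$ is generally suboptimal (a sharper argument through $\norm[2]{\mvemph{B}} \leq \norm[F]{\mvemph{B}}$ and pointwise bounds $\abs{B_{ij}} \leq \norm[2]{\restrictN{\mvemph{B}}{I \times J}}$ would even give the factor $N$), but $N^2$ is sufficient for the purposes of \cref{System_matrix_HMatrix_approx} and is what we record here.
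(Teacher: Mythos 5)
Your proof is correct, and it takes a genuinely different route from the paper's. You decompose the matrix additively as $\mvemph{B} = \sum_{(I,J)\in\BPart} \mvemph{B}^{(I,J)}$, apply the triangle inequality for the operator norm, and invoke the (correct) fact that zero-padding preserves the spectral norm. The paper instead works directly with the quadratic form: it expands $\norm[2]{\mvemph{B}\mvemph{x}}^2 = \sum_{(I,J)\in\BPart}\skalar[2]{\restrictN{\mvemph{B}}{I\times J}\restrictN{\mvemph{x}}{J}}{\restrict{\mvemph{B}\mvemph{x}}{I}}$, applies Cauchy--Schwarz blockwise together with $\norm[2]{\restrictN{\mvemph{x}}{J}}\le\norm[2]{\mvemph{x}}$ and $\norm[2]{\restrict{\mvemph{B}\mvemph{x}}{I}}\le\norm[2]{\mvemph{B}\mvemph{x}}$, and then divides by $\norm[2]{\mvemph{B}\mvemph{x}}$. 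Both arguments hinge on the same trivial counting bound $\cardN{\BPart}\le N^2$, but the paper's version never needs the zero-padding invariance as a separate lemma, since it never leaves the level of vectors and inner products. Your route is perhaps slightly more modular and makes the subadditivity structure explicit; the paper's is marginally more self-contained. Your closing remark that a Frobenius-norm argument yields the sharper factor $N$ is also correct; the paper instead points (in \cref{Block_partition_matrix_norm_Remark}) to the even better $\Landau{\ln N}$ bound available for block partitions coming from a cluster tree, which is what actually matters for the application.
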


\begin{proof}
The statement follows from 
\begin{equation*}
\forall \mvemph{x} \in \R^N: \quad \norm[2]{\mvemph{B}\mvemph{x}}^2 = \skalar[2]{\mvemph{B}\mvemph{x}}{\mvemph{B}\mvemph{x}} = \sum_{(I,J) \in \BPart} \skalar[2]{\restrictN{\mvemph{B}}{I \times J} \restrictN{\mvemph{x}}{J}}{\restrict{\mvemph{B}\mvemph{x}}{I}} \leq \( \max_{(I,J) \in \BPart} \norm[2]{\restrictN{\mvemph{B}}{I \times J}} \) \cardN{\BPart} \norm[2]{\mvemph{x}} \norm[2]{\mvemph{B}\mvemph{x}}
\end{equation*}
and the bound $\cardN{\BPart} \leq N^2$, which is valid for \emph{any} partition $\BPart$ of $\set{1,\dots,N} \times \set{1,\dots,N}$.
\end{proof}

\begin{remark} \label{Block_partition_matrix_norm_Remark}
The constant $\Landau{N^2}$ in the upper bound is far from optimal. If one assumes a block partition $\BPart$ stemming from a hierarchical cluster tree $\Tree{N}$, then it can be reduced to $\Landau{\ln N}$: In \cite[Lemma 6.5.8]{Hackbusch_Hierarchical_matrices}, the author showed the bound $\norm[2]{\mvemph{B}} \leq C_{\mathrm{sparse}}(\Tree{N \times N}) \depth{\Tree{N}} \max_{(I,J) \in \BPart} \norm[2]{\restrictN{\mvemph{B}}{I \times J}}$ with the sparsity constant $C_{\mathrm{sparse}}(\Tree{N \times N})$ and the depth of the cluster tree $\depth{\Tree{N}}$. Again, due to \cite{Grasedyck_Clustering}, one can achieve $C_{\mathrm{sparse}}(\Tree{N \times N}) \cleq 1$ and $\depth{\Tree{N}} \cleq \ln(\hMin{\Elements}^{-1}) \cleq \ln(N)$ with a geometrically balanced cluster tree on any mesh satisfying $\hMin{\Elements} \cgeq \h{\Elements}^{\CCard}$.
\end{remark}

The following lemma is the main step in shifting the original problem from matrices to function spaces. Note that the representation formula for $\mvemph{A}^{-1}$ from \cref{System_matrix_Rep_formula} plays a crucial role in its proof.

\begin{lemma} \label{System_matrix_block_approx}
Let $(I,J) \in \BPartAdm$ and $V \subseteq \Lp{2}{\Omega}$ be a finite-dimensional subspace. Then, there exist matrices $\mvemph{X} \in \R^{I \times r}$ and $\mvemph{Y} \in \R^{J \times r}$ of size $r \leq \dimN{V}$, such that there holds the error bound
\begin{equation*}
\norm[2]{\restrictN{\mvemph{A}^{-1}}{I \times J} - \mvemph{X} \mvemph{Y}^T} \leq \norm{\Lambda}^2 \* \sup_{\substack{f \in \Lp{2}{\Omega}: \\ \supp[\Elements]{f} \subseteq \Patch{\Elements}{J}}} \norm[\Lp{2}{\Omega}]{f}^{-1} \* \inf_{v \in V} \norm[\Lp{2}{\Patch{\Elements}{I}}]{\SolutionOp{\Elements} f - v}.
\end{equation*}
\end{lemma}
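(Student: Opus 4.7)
The plan is to rewrite the matrix block using the representation formula $\mvemph{A}^{-1} = \Lambda^T \SolutionOp{\Elements} \Lambda$ from \cref{System_matrix_Rep_formula} and then replace the middle factor $\SolutionOp{\Elements} \Lambda \mvemph{f}$ by a best approximation from $V$. Concretely, for any $\mvemph{g} \in \R^J$, let $\mvemph{f} \in \R^N$ denote its extension by zero (so $\supp{\mvemph{f}} \subseteq J$ and $\norm[2]{\mvemph{f}} = \norm[2]{\mvemph{g}}$). Set $f := \Lambda \mvemph{f} \in \Lp{2}{\Omega}$; the locality of $\Lambda$ from \cref{Coord_mappings_props} gives $\supp[\Elements]{f} \subseteq \Patch{\Elements}{J}$, and the continuity yields $\norm[\Lp{2}{\Omega}]{f} \leq \norm{\Lambda} \norm[2]{\mvemph{f}}$. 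Then $(\restrictN{\mvemph{A}^{-1}}{I \times J}) \mvemph{g} = \restrictN{(\Lambda^T \SolutionOp{\Elements} f)}{I}$, so the whole task is to approximate $\SolutionOp{\Elements} f$ by some $v \in V$ in a way that behaves well under the subsequent application of $\restrictN{\Lambda^T}{I}$.

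To build $\mvemph{X}$ and $\mvemph{Y}$, fix any linear operator $\fDef{P_V}{\Lp{2}{\Omega}}{V}$ which, for each $g \in \Lp{2}{\Omega}$, returns a minimizer of $\norm[\Lp{2}{\Patch{\Elements}{I}}]{g - v}$ over $v \in V$. (Such a linear section always exists; if $V$ restricts injectively to $\Patch{\Elements}{I}$, $P_V$ is the obvious orthogonal projection in $\Lp{2}{\Patch{\Elements}{I}}$ composed with the inverse of the restriction map, and in general one passes to the quotient by $\Set{v \in V}{\restrict{v}{\Patch{\Elements}{I}} \equiv 0}$ to keep the rank $r \leq \dim V$.) Picking a basis $\{v_1,\dots,v_r\}$ of the image of $P_V$, I write $P_V \SolutionOp{\Elements} \Lambda \mvemph{f} = \sum_{k=1}^{r} c_k(\mvemph{f}) \, v_k$ where each $c_k$ is a linear functional of $\mvemph{f}$ depending only on the entries in $J$. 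The columns $\mvemph{X}_{\cdot,k} := \restrict{\Lambda^T v_k}{I}$ and the rows $\mvemph{Y}_{\cdot,k}$ encoding $c_k(\mvemph{f})$ as a linear combination of $\restrict{\mvemph{f}}{J}$ then yield the rank-$r$ factor $\mvemph{X} \mvemph{Y}^T$ with $\mvemph{X} \mvemph{Y}^T \mvemph{g} = \restrictN{(\Lambda^T P_V \SolutionOp{\Elements} f)}{I}$.

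For the error, I chain the two properties of $\Lambda$ and $\Lambda^T$ already isolated in \cref{Coord_mappings_props}. Using the locality estimate $\norm[\lp{2}{I}]{\Lambda^T w} \leq \norm{\Lambda} \norm[\Lp{2}{\Patch{\Elements}{I}}]{w}$ with $w := \SolutionOp{\Elements} f - P_V \SolutionOp{\Elements} f$, together with the definition of $P_V$ as a best approximation in $\Lp{2}{\Patch{\Elements}{I}}$, yields
\begin{equation*}
\norm[\lp{2}{I}]{(\restrictN{\mvemph{A}^{-1}}{I \times J} - \mvemph{X} \mvemph{Y}^T) \mvemph{g}} \leq \norm{\Lambda} \inf_{v \in V} \norm[\Lp{2}{\Patch{\Elements}{I}}]{\SolutionOp{\Elements} f - v}.
\end{equation*}
Dividing and multiplying by $\norm[\Lp{2}{\Omega}]{f}$, replacing $\norm[\Lp{2}{\Omega}]{f}$ by $\norm{\Lambda} \norm[2]{\mvemph{g}}$, and taking the supremum over all admissible $f$ (which is attained in particular by $f = \Lambda \mvemph{f}$) produces the stated $\norm{\Lambda}^2$ prefactor. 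Taking $\sup$ over $\norm[2]{\mvemph{g}} \leq 1$ converts the vector bound into the claimed operator-norm bound.

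I do not expect any step to be a real obstacle: the only point that needs a little care is the definition of $P_V$, which must be linear and keep the rank controlled by $\dim V$ even when the restriction $V \to \Lp{2}{\Patch{\Elements}{I}}$ has a kernel; everything else is just the interplay between the representation formula and the locality/continuity of $\Lambda$ and $\Lambda^T$ established earlier.
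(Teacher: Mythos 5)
Your proof is correct and reaches the same error bound, but constructs the low-rank block by a different route. You approximate \emph{first in function space}: you fix a linear best-approximation operator $P_V : \Lp{2}{\Omega} \to V$ with respect to $\norm[\Lp{2}{\Patch{\Elements}{I}}]{\cdot}$ and then apply $\restrict{\Lambda^T(\cdot)}{I}$; to keep $P_V$ linear with rank $\leq \dim V$ you need (and correctly flag) a linear section of the restriction map $V \to \restrict{V}{\Patch{\Elements}{I}}$, an extra step because the $\Lp{2}{\Patch{\Elements}{I}}$-minimizer in $V$ is not unique when that restriction has a kernel. The paper avoids this by pushing $V$ forward \emph{first}: it sets $\mvemph{V} := \restrict{\Lambda^T V}{I} \subseteq \R^I$, lets the columns of $\mvemph{X}$ be an $\lp{2}{I}$-orthonormal basis of $\mvemph{V}$, and puts $\mvemph{Y} := (\restrictN{\mvemph{A}^{-1}}{I \times J})^T \mvemph{X}$. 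Then $\mvemph{X}\mvemph{X}^T$ is the $\lp{2}{I}$-orthogonal projection onto $\mvemph{V}$ (automatically linear, automatically of the right rank), the blockwise error equals $\inf_{\mvemph{v}\in\mvemph{V}}\norm[\lp{2}{I}]{\restrict{\mvemph{A}^{-1}\mvemph{f}}{I} - \mvemph{v}}$ exactly, and the locality estimate $\norm[\lp{2}{I}]{\Lambda^T w} \leq \norm{\Lambda}\norm[\Lp{2}{\Patch{\Elements}{I}}]{w}$ from \cref{Coord_mappings_props} then converts this into an $\Lp{2}{\Patch{\Elements}{I}}$-infimum over $V$. Both routes yield the stated estimate; the paper's is slightly cleaner because projecting in the target $\R^I$ makes linearity and the rank bound immediate and in fact gives the \emph{optimal} rank-$r$ block, whereas yours makes the factorization through the $(\leq \dim V)$-dimensional image of $P_V$ more explicit at the cost of the section construction.
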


\begin{proof}
We use the transposed coordinate mapping $\fDef{\Lambda^T}{\Lp{2}{\Omega}}{\R^N}$ from \cref{Coord_mappings_props} to define $\mvemph{V} := \restrict{\Lambda^T V}{I} \subseteq \R^I$. Note that $r := \dimN{\mvemph{V}} \leq \dimN{V}$. Next, let the columns of the matrix $\mvemph{X} \in \R^{I \times r}$ be an $\lp{2}{I}$-orthonormal basis of $\mvemph{V}$. In particular, the product $\mvemph{X}\mvemph{X}^T \in \R^{I \times I}$ represents the $\lp{2}{I}$-orthogonal projection from $\R^I$ onto $\mvemph{V}$. Finally, set $\mvemph{Y} := (\restrictN{\mvemph{A}^{-1}}{I \times J})^T \mvemph{X} \in \R^{J \times r}$.

Now, for every $\mvemph{f} \in \R^N$ with $\supp{\mvemph{f}} \subseteq J$, we get the bound
\begin{equation*}
\begin{array}{rclcl}
\norm[\lp{2}{I}]{(\restrictN{\mvemph{A}^{-1}}{I \times J} - \mvemph{X} \mvemph{Y}^T) \restrictN{\mvemph{f}}{J}} &=& \norm[\lp{2}{I}]{(\mvemph{I} - \mvemph{X} \mvemph{X}^T) \restrict{\mvemph{A}^{-1} \mvemph{f}}{I}} &=& \inf_{\mvemph{v} \in \mvemph{V}} \norm[\lp{2}{I}]{\restrict{\mvemph{A}^{-1} \mvemph{f}}{I} - \mvemph{v}} \\
&\stackrel{\cref{System_matrix_Rep_formula}}{=}& \inf_{v \in V} \norm[\lp{2}{I}]{\Lambda^T(\SolutionOp{\Elements} \Lambda \mvemph{f} - v)} &\stackrel{\cref{Coord_mappings_props}}{\leq}& \norm{\Lambda} \* \inf_{v \in V} \norm[\Lp{2}{\Patch{\Elements}{I}}]{\SolutionOp{\Elements} \Lambda \mvemph{f} - v}.
\end{array}
\end{equation*}

We can divide both sides by $\norm[\lp{2}{J}]{\mvemph{f}}$, take suprema and substitute $f := \Lambda \mvemph{f} \in \Lp{2}{\Omega}$. Finally, we use $\supp[\Elements]{f} = \supp[\Elements]{\Lambda \mvemph{f}} \subseteq \Patch{\Elements}{\supp{\mvemph{f}}} \subseteq \Patch{\Elements}{J}$ and $\norm[\lp{2}{J}]{\mvemph{f}}^{-1} \leq \norm{\Lambda} \norm[\Lp{2}{\Omega}]{f}^{-1}$ to get the desired result.
\end{proof}

A thorough understanding of the preceding lemma is absolutely fundamental for the subsequent sections. Therefore, let us recall its interpretation from \cref{SSec_Proof_overview}:

Let $\ElementsB, \ElementsD \subseteq \Elements$ with $0 < \diam[\Elements]{\ElementsB} \leq \CAdm \dist[\Elements]{\ElementsB}{\ElementsD}$ and $L \in \N$. How can we construct a subspace $V_{\ElementsB,\ElementsD,L} \subseteq \Lp{2}{\Omega}$ of dimension $\dimN{V_{\ElementsB,\ElementsD,L}} \cleq L^{\kappa}$ (for some fixed $\kappa \geq 1$) that satisfies the error bound
\begin{equation*}
\inf_{v \in V_{\ElementsB,\ElementsD,L}} \norm[\Lp{2}{\ElementsB}]{\SolutionOp{\Elements} f - v} \cleq 2^{-L} \norm[\Lp{2}{\ElementsD}]{f},
\end{equation*}
for all source functions $f \in \Lp{2}{\Omega}$ with $\supp[\Elements]{f} \subseteq \ElementsD$?

\subsection{The discrete cut-off operator} \label{SSec_Disc_cutoff_op}

The notion of \emph{cluster inflation} provides a means of enlarging a given cluster by a predefined threshold with respect to the mesh metric $\dist[\Elements]{\cdot}{\cdot}$ from \cref{Mesh_Metric}. This is one of the core concepts in our proof and will be used extensively. We acknowledge this fact with tight notation:

\begin{definition} \label{Mesh_Metric_infl_cluster}
For every cluster $\ElementsB \subseteq \Elements$ and every radius $\delta \geq 0$, we introduce the \emph{inflated cluster}
\begin{equation*}
\inflateN{\ElementsB}{\delta} := \Set{T \in \Elements}{\dist[\Elements]{T}{\ElementsB} \leq \delta}.
\end{equation*}
\end{definition}

We summarize the most important facts about the mesh metric and inflated clusters in the subsequent lemma. We omit the elementary proofs, as they follow directly from the respective definitions.

\begin{lemma} \label{Mesh_Metric_props}
The mesh metric $\dist[\Elements]{\cdot}{\cdot}$ from \cref{Mesh_Metric} defines a metric on $\Elements$. There holds the triangle type inequality
\begin{equation*}
\forall \ElementsA,\ElementsB,\ElementsC \subseteq \Elements: \quad \quad \dist[\Elements]{\ElementsA}{\ElementsC} \leq \dist[\Elements]{\ElementsA}{\ElementsB} + \diam[\Elements]{\ElementsB} + \dist[\Elements]{\ElementsB}{\ElementsC}.
\end{equation*}

For every element $T \in \Elements$ and every neighbor $S \in \Patch{\Elements}{T}$, the distance is bounded by $\dist[\Elements]{T}{S} \leq \CShape \h{T}$. On the other hand, for every $S \in \Elements\backslash\set{T}$, we have the lower bound $\dist[\Elements]{T}{S} \geq \CShape^{-1}(\h{T} + \h{S})$. Additionally, for every cluster $\ElementsB \subseteq \Elements$, there holds $\h{\ElementsB} \leq \max\set{\hMin{\ElementsB}, \CShape \diam[\Elements]{\ElementsB}}$.

When dealing with a second mesh $\ElementsS \subseteq \Pow{\Omega}$, cluster diameters are essentially equivalent:
\begin{equation*}
\forall \ElementsB \subseteq \Elements: \quad \quad \diam[\ElementsS]{\Patch{\ElementsS}{\bigcup\ElementsB}} \leq \diam[\Elements]{\ElementsB} + 2\h{\ElementsB} + 2\h{\Patch{\ElementsS}{\bigcup\ElementsB}}.
\end{equation*}

Finally, consider clusters $\ElementsB \subseteq \ElementsC \subseteq \Elements$ and inflation radii $\delta,\epsilon \geq 0$. Then, $\ElementsB \subseteq \inflateN{\ElementsB}{\delta} \subseteq \inflate{\inflateN{\ElementsB}{\delta}}{\epsilon} \subseteq \inflateN{\ElementsB}{\delta+\epsilon} \subseteq \inflateN{\ElementsC}{\delta+\epsilon}$. For the cluster patch $\Patch{\Elements}{\ElementsB}$ we have the inclusion $\Patch{\Elements}{\ElementsB} \subseteq \inflateN{\ElementsB}{\CShape \h{\ElementsB}}$. We conclude this summary with the bounds $\diam[\Elements]{\inflateN{\ElementsB}{\delta}} \leq \diam[\Elements]{\ElementsB} + 2\delta$ and $\h{\inflateN{\ElementsB}{\delta}} \leq \max\set{\h{\ElementsB}, \CShape \delta}$.
\end{lemma}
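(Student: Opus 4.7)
The plan is to verify each sub-claim of this geometric summary separately, since it is a list of independent bookkeeping facts rather than a single theorem. The starting point is the observation from \cref{Mesh_Metric} that $\dist[\Elements]{T}{S}$ equals the Euclidean distance $\norm[2]{x_S-x_T}$ between incenters, so $\dist[\Elements]{\cdot}{\cdot}$ inherits the metric axioms from the Euclidean metric on $\R^d$ via the injection $T \mapsto x_T$. The triangle-type inequality for clusters will then be obtained by choosing realizers $A^\ast \in \ElementsA$, $B_1^\ast, B_2^\ast \in \ElementsB$, $C^\ast \in \ElementsC$ of $\dist[\Elements]{\ElementsA}{\ElementsB}$ and $\dist[\Elements]{\ElementsB}{\ElementsC}$ respectively, and chaining three Euclidean triangle inequalities along $x_{A^\ast} \to x_{B_1^\ast} \to x_{B_2^\ast} \to x_{C^\ast}$; the middle hop is bounded by $\diam[\Elements]{\ElementsB}$ by definition.

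For the two element-to-element bounds I would invoke the shape-regularity inclusions $\Ball[2]{x_T}{\CShape^{-1}\h{T}} \subseteq T \subseteq \bigcup\Patch{\Elements}{T} \subseteq \Ball[2]{x_T}{\CShape \h{T}}$. The upper bound $\dist[\Elements]{T}{S} \leq \CShape \h{T}$ for $S \in \Patch{\Elements}{T}$ is immediate, since $x_S \in \bigcup\Patch{\Elements}{T} \subseteq \Ball[2]{x_T}{\CShape \h{T}}$. For the lower bound with $S \neq T$, the open balls $\Ball[2]{x_T}{\CShape^{-1}\h{T}}$ and $\Ball[2]{x_S}{\CShape^{-1}\h{S}}$ lie inside the disjoint (open) elements $T,S$ and are therefore themselves disjoint, which forces $\norm[2]{x_S-x_T} \geq \CShape^{-1}(\h{T}+\h{S})$. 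The cluster estimate $\h{\ElementsB} \leq \max\set{\hMin{\ElementsB},\CShape \diam[\Elements]{\ElementsB}}$ then drops out: picking $T \in \ElementsB$ with $\h{T}=\h{\ElementsB}$, either $\ElementsB=\set{T}$ (in which case $\h{\ElementsB}=\hMin{\ElementsB}$) or a second element $S \in \ElementsB$ gives $\diam[\Elements]{\ElementsB} \geq \dist[\Elements]{T}{S} \geq \CShape^{-1}\h{T}$.

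The comparison of mesh diameters on a second mesh $\ElementsS$ is once more a chain argument: for $T,T' \in \Patch{\ElementsS}{\bigcup\ElementsB}$, I pick $B,B' \in \ElementsB$ whose closures meet $\closureN{T}$, $\closureN{T'}$ respectively, and estimate each hop by $\norm[2]{x_T-z} \leq \h{T}$ for $z \in \closureN{T}$, picking up $2\h{\Patch{\ElementsS}{\bigcup\ElementsB}} + 2\h{\ElementsB}$ after the four insertions and then $\diam[\Elements]{\ElementsB}$ for the middle segment. The inflation facts are near-immediate from \cref{Mesh_Metric_infl_cluster} combined with the triangle inequality established above: monotonicities $\ElementsB \subseteq \inflateN{\ElementsB}{\delta}$ and $\inflateN{\ElementsB}{\delta} \subseteq \inflateN{\ElementsC}{\delta+\epsilon}$ follow from $\dist[\Elements]{T}{\ElementsB}=0$ for $T\in\ElementsB$ and from $\dist[\Elements]{T}{\ElementsC} \leq \dist[\Elements]{T}{\ElementsB}$ when $\ElementsB \subseteq \ElementsC$; the composition $\inflate{\inflateN{\ElementsB}{\delta}}{\epsilon} \subseteq \inflateN{\ElementsB}{\delta+\epsilon}$ uses the cluster triangle inequality with a single-element intermediate cluster (zero diameter); the patch inclusion $\Patch{\Elements}{\ElementsB} \subseteq \inflateN{\ElementsB}{\CShape \h{\ElementsB}}$ is the neighbor upper bound applied elementwise; the diameter estimate $\diam[\Elements]{\inflateN{\ElementsB}{\delta}} \leq \diam[\Elements]{\ElementsB}+2\delta$ uses the triangle inequality with intermediate cluster $\ElementsB$; and for the last bound, any $T \in \inflateN{\ElementsB}{\delta}$ either lies in $\ElementsB$ (contributing at most $\h{\ElementsB}$) or satisfies $\dist[\Elements]{T}{B} \leq \delta$ for some $B \neq T$ in $\ElementsB$, in which case the non-neighbor lower bound already established yields $\h{T} \leq \CShape \delta$. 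There is no genuine obstacle in this lemma; the only point requiring care is to keep the Euclidean metric on $\R^d$ and the mesh metric $\dist[\Elements]{\cdot}{\cdot}$ cleanly separated at each step, and to remember that the shape-regularity inclusions translate freely between metric bounds on incenters and size bounds on elements.
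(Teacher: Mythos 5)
Your proof is correct. The paper itself omits the proof of this lemma, stating only that ``the elementary proofs \ldots follow directly from the respective definitions,'' and your write-up is exactly the kind of elementary verification the authors have in mind: reduce the mesh metric to the Euclidean metric on incenters, use the two shape-regularity ball inclusions to get the elementwise upper and lower distance bounds, and then chain triangle inequalities (inserting incenters, boundary points, or single-element intermediate clusters as appropriate) to derive the remaining cluster-level statements.

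One small remark worth making explicit if you write this out in full: for the lower bound $\dist[\Elements]{T}{S} \geq \CShape^{-1}(\h{T}+\h{S})$ you implicitly use that the open elements $T$ and $S$ are genuinely disjoint, not merely disjoint up to measure zero as in \cref{Mesh}. This is fine — a nonempty open set has positive measure, so two open simplices whose intersection has measure zero are in fact disjoint — but since the definition only asserts $\meas{T \cap S}=0$, this one-line justification belongs in the argument.
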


For the construction of the cut-off function $\CutoffFc{\ElementsB}{\delta}$ in \cref{Cut_off_fct_disc} we will use a variant of the classic \emph{Clément operator}, \cite{clement75}.

\begin{definition} \label{App_op_Clement}
Let $\Nodes \subseteq \closureN{\Omega}$ be the nodes of the mesh $\Elements$ and denote by $\Set{b_N}{N \in \Nodes} \subseteq \Skp{1}{1}{\Elements}$ the well-known \emph{hat-functions}, i.e. $b_N(M) = \kronecker{NM}$. We write $\mean[T]{v} := \meas{T}^{-1} \I{T}{v}{x} \in \R$ for the mean value of a function $v \in \Lp{2}{\Omega}$ on an element $T \in \Elements$. Now, the \emph{Clément operator} $\fDef{J_{\Elements}}{\Lp{2}{\Omega}}{\Skp{1}{1}{\Elements}}$ is defined in a nodewise fashion: For every $v \in \Lp{2}{\Omega}$, we set $J_{\Elements} v := \sum_{N \in \Nodes} \beta_N b_N$, where the nodal value $\beta_N$ is given by
\begin{equation*}
\beta_N := \frac{1}{\cardN{\Patch{\Elements}{N}}} \sum_{T \in \Patch{\Elements}{N}} \mean[T]{v}.
\end{equation*}
\end{definition}

\begin{lemma} \label{App_op_Clement_props}
The linear operator $J_{\Elements}$ has a local projection property: Given a cluster $\ElementsB \subseteq \Elements$ and a function $v \in \Lp{2}{\Omega}$ with $\restrictN{v}{\Patch{\Elements}{\ElementsB}} \equiv \mathrm{const}$, there holds $\restrict{J_{\Elements} v}{\ElementsB} = \restrictN{v}{\ElementsB}$. Furthermore, $J_{\Elements}$ preserves discrete supports: For every $q \geq 0$ and every $v \in \Skp{0}{q}{\Elements}$, there holds $\supp[\Elements]{J_{\Elements} v} \subseteq \Patch{\Elements}{\supp{v}}$. 
Moreover, $J_{\Elements}$ preserves ranges: For every $v \in \Skp{0}{1}{\Elements}$ with $0 \leq v \leq 1$ there also holds $0 \leq J_{\Elements} v \leq 1$. Finally, we have the stability bound
\begin{equation*}
\forall v \in \Lp{2}{\Omega}: \forall T \in \Elements: \quad \quad \h{T} \seminorm[\Wkp{1}{\infty}{T}]{J_{\Elements} v} \cleq \max_{S \in \Patch{\Elements}{T}} \abs{\mean[T]{v} - \mean[S]{v}}.
\end{equation*}
\end{lemma}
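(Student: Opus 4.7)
Each of the four assertions is essentially bookkeeping with the definition of $J_{\Elements}$ and the standard properties of the nodal hat functions $\{b_N\}_{N \in \Nodes}$, most notably that they form a nonnegative partition of unity on $\Omega$. My plan is to dispatch the first three items by direct computation on the nodal coefficients $\beta_N$ and to handle the stability bound via the classic ``subtract-a-constant'' trick exploiting $\sum_{N} \gradN{b_N} = 0$.

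\textbf{Items (1)--(3).} For the local projection property, suppose $\restrictN{v}{\Patch{\Elements}{\ElementsB}} \equiv c$, and fix a node $N \in \closureN{B}$ for some $B \in \ElementsB$. Every $T \in \Patch{\Elements}{N}$ shares the node $N$ with $B$, hence $T \in \Patch{\Elements}{B} \subseteq \Patch{\Elements}{\ElementsB}$, so $\mean[T]{v} = c$ and consequently $\beta_N = c$. Since $\sum_{N \in \Nodes} b_N \equiv 1$ on $\Omega$, the representation $J_{\Elements} v = \sum_N \beta_N b_N$ restricted to $\ElementsB$ collapses to $c = v$ on $\ElementsB$. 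For the support preservation, the contrapositive is immediate: if $T \notin \Patch{\Elements}{\supp[\Elements]{v}}$, then for every node $N \in \closureN{T}$ one has $\Patch{\Elements}{N} \cap \supp[\Elements]{v} = \emptyset$, so $\mean[S]{v} = 0$ for all $S \in \Patch{\Elements}{N}$, giving $\beta_N = 0$ and hence $\restrictN{J_{\Elements} v}{T} \equiv 0$. The range statement is an averaging argument: $0 \leq v \leq 1$ yields $0 \leq \mean[T]{v} \leq 1$, hence $0 \leq \beta_N \leq 1$ as a convex combination, and the partition-of-unity bounds $0 \leq J_{\Elements} v \leq \sum_N b_N = 1$.

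\textbf{Item (4).} Fix $T \in \Elements$. Since $\sum_{N \in \closureN{T}} b_N \equiv 1$ on $T$, one has $\sum_{N \in \closureN{T}} \gradN{b_N} \equiv 0$ on $T$, so for any constant $c \in \R$
\begin{equation*}
\restrict{\gradN{J_{\Elements} v}}{T} = \sum_{N \in \closureN{T}} (\beta_N - c) \restrict{\gradN{b_N}}{T}.
\end{equation*}
I choose $c := \mean[T]{v}$. For every node $N \in \closureN{T}$ the element $T$ itself lies in $\Patch{\Elements}{N}$, so every $S \in \Patch{\Elements}{N}$ satisfies $S \in \Patch{\Elements}{T}$, and the definition of $\beta_N$ as an average gives
\begin{equation*}
\abs{\beta_N - \mean[T]{v}} \leq \max_{S \in \Patch{\Elements}{N}} \abs{\mean[S]{v} - \mean[T]{v}} \leq \max_{S \in \Patch{\Elements}{T}} \abs{\mean[S]{v} - \mean[T]{v}}.
\end{equation*}
A standard scaling argument on the reference simplex yields $\seminorm[\Wkp{1}{\infty}{T}]{b_N} \cleq \h{T}^{-1}$; combining the two estimates with the fact that the number of nodes of $T$ is bounded (by shape regularity) produces the claimed inequality.

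\textbf{Expected difficulty.} None of the steps is genuinely deep; the mild subtlety is only to observe that in item (1) one needs the hypothesis on the full patch $\Patch{\Elements}{\ElementsB}$ (not merely on $\ElementsB$) to control the nodal averages near $\boundaryN{\ElementsB}$, and that in item (4) the choice $c = \mean[T]{v}$ is what confines all relevant averages to neighbors of $T$, keeping the right-hand side local.
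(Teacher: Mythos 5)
Your proof is correct. The paper states \cref{App_op_Clement_props} without proof (it is a standard collection of facts about Cl\'ement-type operators), so there is no paper argument to compare against; your four steps are exactly the expected ones: the partition-of-unity/convexity argument for items (1) and (3), the contrapositive support argument for item (2), and the ``subtract $\mean[T]{v}$'' trick using $\sum_{N \in \overline{T}} \gradN{b_N} \equiv 0$ together with the scaling $\seminorm[\Wkp{1}{\infty}{T}]{b_N} \cleq \h{T}^{-1}$ for item (4). One small remark: the paper phrases the support inclusion as $\supp[\Elements]{J_{\Elements} v} \subseteq \Patch{\Elements}{\supp{v}}$ with the \emph{continuous} support $\supp{v} \subseteq \R^d$, whereas your argument runs through the discrete support $\supp[\Elements]{v}$; since $v \in \Skp{0}{q}{\Elements}$ is a piecewise polynomial, $\supp{v} = \bigcup_{B \in \supp[\Elements]{v}} \closureN{B}$ and the two patches agree, so nothing is lost, but it is worth noting that the reduction is using the piecewise-polynomial hypothesis there.
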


The discretized model problem $\bilinear[a]{u}{v} = \skalar[\Lp{2}{\Omega}]{f}{v}$ was phrased in terms of \emph{global} functions $u,v \in \SkpO{1}{p}{\Elements}$. But if we plug in a function $v$ with local support, e.g., $\supp[\Elements]{v} \subseteq \ElementsB$ for some prescribed cluster $\ElementsB \subseteq \Elements$, we can extract local information about $u$ on $\ElementsB$. This motivates the usage of \emph{discrete cut-off functions}.

\begin{lemma} \label{Cut_off_fct_disc}
Let $\ElementsB \subseteq \Elements$ and $\delta>0$ with $4\CShape^3 \h{\ElementsB} \leq \delta \cleq 1$. Then, there exists a \emph{discrete cut-off function} $\CutoffFc{\ElementsB}{\delta}$ with
\begin{equation*}
\CutoffFc{\ElementsB}{\delta} \in \Skp{1}{1}{\Elements}, \quad \quad \supp[\Elements]{\CutoffFc{\ElementsB}{\delta}} \subseteq \inflateN{\ElementsB}{\delta}, \quad \quad \restrictN{\CutoffFc{\ElementsB}{\delta}}{\ElementsB} \equiv 1, \quad \quad 0 \leq \CutoffFc{\ElementsB}{\delta} \leq 1, \quad \quad \norm[\Wkp{1}{\infty}{\Omega}]{\CutoffFc{\ElementsB}{\delta}} \cleq \frac{1}{\delta}.
\end{equation*}
\end{lemma}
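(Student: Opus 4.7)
The plan is to construct $\CutoffFc{\ElementsB}{\delta}$ by applying the Clément operator $J_{\Elements}$ from \cref{App_op_Clement} to a piecewise constant cluster-based ramp. Concretely, for each $T \in \Elements$ I would set
\begin{equation*}
c_T := \min\set{1, \max\set{0, 2 - \frac{4\CShape^2}{\delta}\dist[\Elements]{T}{\ElementsB}}} \in [0,1],
\end{equation*}
let $\chi \in \Skp{0}{0}{\Elements}$ be the piecewise constant function with $\restrictN{\chi}{T} \equiv c_T$, and define $\CutoffFc{\ElementsB}{\delta} := J_{\Elements}\chi$. By construction, $c_T = 1$ on the inner ``core'' $\inflateN{\ElementsB}{\delta/(4\CShape^2)}$, $c_T = 0$ outside the ``halo'' $\inflateN{\ElementsB}{\delta/(2\CShape^2)}$, and $c_T$ depends linearly on $\dist[\Elements]{T}{\ElementsB}$ in between. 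The precise choice of radii and slope is calibrated so that the single patch expansion produced by $J_\Elements$ keeps the support inside $\inflateN{\ElementsB}{\delta}$, while the plateau still covers $\Patch{\Elements}{\ElementsB}$.

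The first four properties then follow from the corresponding properties of $J_{\Elements}$ collected in \cref{App_op_Clement_props}. Membership $\CutoffFc{\ElementsB}{\delta} \in \Skp{1}{1}{\Elements}$ and the range bound $0 \leq \CutoffFc{\ElementsB}{\delta} \leq 1$ are immediate from the codomain of $J_\Elements$ and its range preservation. To establish $\restrict{\CutoffFc{\ElementsB}{\delta}}{\ElementsB} \equiv 1$ via the local projection property it is enough to check $\chi \equiv 1$ on $\Patch{\Elements}{\ElementsB}$: for $T \in \ElementsB$ and $S \in \Patch{\Elements}{T}$ the chain
\begin{equation*}
\dist[\Elements]{S}{\ElementsB} \leq \dist[\Elements]{S}{T} \leq \CShape\h{T} \leq \CShape\h{\ElementsB} \leq \frac{\delta}{4\CShape^2},
\end{equation*}
using the hypothesis $4\CShape^3\h{\ElementsB} \leq \delta$ in the last step, forces $c_S = 1$. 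For the support inclusion I would combine the discrete support preservation of $J_\Elements$ with the cluster-inflation calculus of \cref{Mesh_Metric_props}:
\begin{equation*}
\supp[\Elements]{\CutoffFc{\ElementsB}{\delta}} \subseteq \Patch{\Elements}{\inflateN{\ElementsB}{\delta/(2\CShape^2)}} \subseteq \inflateN{\ElementsB}{\delta/(2\CShape^2) + \CShape\h{\inflateN{\ElementsB}{\delta/(2\CShape^2)}}} \subseteq \inflateN{\ElementsB}{\delta},
\end{equation*}
where the last inclusion uses $\h{\inflateN{\ElementsB}{\delta/(2\CShape^2)}} \leq \max\set{\h{\ElementsB}, \delta/(2\CShape)} = \delta/(2\CShape)$, again via $\h{\ElementsB} \leq \delta/(4\CShape^3)$.

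The remaining $\Wkp{1}{\infty}{\Omega}$-bound is where the prescribed slope $4\CShape^2/\delta$ does its work. Feeding $\chi$ into the stability estimate of \cref{App_op_Clement_props} and using the triangle inequality for the mesh metric (\cref{Mesh_Metric_props}), I obtain for any $S \in \Patch{\Elements}{T}$
\begin{equation*}
\abs{\mean[T]{\chi} - \mean[S]{\chi}} = \abs{c_T - c_S} \leq \frac{4\CShape^2}{\delta}\dist[\Elements]{T}{S} \leq \frac{4\CShape^3}{\delta}\h{T},
\end{equation*}
so that dividing by $\h{T}$ gives $\seminorm[\Wkp{1}{\infty}{T}]{\CutoffFc{\ElementsB}{\delta}} \cleq 1/\delta$ uniformly in $T$. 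Combined with $\norm[\Lp{\infty}{\Omega}]{\CutoffFc{\ElementsB}{\delta}} \leq 1 \cleq 1/\delta$ (absorbing the hypothesis $\delta \cleq 1$ into the hidden constant), this closes the full estimate. The one genuinely delicate step is the radius bookkeeping in the support inclusion: the inner and outer radii of the ramp have to be chosen just tightly enough that the single patch expansion caused by $J_{\Elements}$ --- amplified by $\CShape$ and by $\h{\inflateN{\ElementsB}{\delta/(2\CShape^2)}}$ --- still fits inside $\inflateN{\ElementsB}{\delta}$, and this is precisely what the assumption $4\CShape^3\h{\ElementsB} \leq \delta$ is designed to guarantee.
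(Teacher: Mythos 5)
Your proposal is correct and follows essentially the same route as the paper: build a piecewise-constant ramp based on the mesh metric, mollify with the Clément operator $J_{\Elements}$, and then read off the five properties from \cref{App_op_Clement_props} and the cluster-inflation calculus of \cref{Mesh_Metric_props}. The only (cosmetic) difference is that the paper's ramp is $\max\{0,\, 1-\dist[\Elements]{T}{\Patch{\Elements}{\ElementsB}}/\epsilon\}$ with $\epsilon=\delta/(4\CShape^2)$, so the plateau at $1$ is enforced by measuring distance to $\Patch{\Elements}{\ElementsB}$ rather than by a $\min\{1,\cdot\}$ clip; both bookkeeping conventions land the support inside $\inflateN{\ElementsB}{\delta}$ under the same hypothesis $4\CShape^3 \h{\ElementsB}\leq\delta$.
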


\begin{proof}
We abbreviate $\epsilon := \delta/(4\CShape^2) > 0$ and consider a step function $\CutoffFc{}{} \in \Skp{0}{0}{\Elements}$ defined by
\begin{equation*}
\forall T \in \Elements: \quad \quad \restrictN{\CutoffFc{}{}}{T} := \max\set{0, 1-\dist[\Elements]{T}{\Patch{\Elements}{\ElementsB}}/\epsilon} \in \R.
\end{equation*}

From the definition we immediately get $\supp[\Elements]{\CutoffFc{}{}} \subseteq \inflateN{\Patch{\Elements}{\ElementsB}}{\epsilon}$ and $\restrictN{\CutoffFc{}{}}{\Patch{\Elements}{\ElementsB}} \equiv 1$ as well as $0 \leq \CutoffFc{}{} \leq 1$. (Recall that $\Patch{\Elements}{\ElementsB}$ are all patch elements of $\ElementsB$ and $\inflateN{\Patch{\Elements}{\ElementsB}}{\epsilon}$ is the corresponding inflated cluster by a radius of $\epsilon$.) Next, for every $T \in \Elements$ and every neighbor $S \in \Patch{\Elements}{T}$, we apply the triangle inequality from \cref{Mesh_Metric_props} to the clusters $\set{T}, \set{S}, \Patch{\Elements}{\ElementsB}$ and derive $\dist[\Elements]{T}{\Patch{\Elements}{\ElementsB}} \leq \dist[\Elements]{T}{S} + \dist[\Elements]{S}{\Patch{\Elements}{\ElementsB}}$. (Recall from \cref{Mesh_Metric} that $\diam[\Elements]{S} = 0$, since $\set{S}$ contains only one element.) Exploiting the Lipschitz continuity of $t \mapsto \max\set{0,t}$, we get the error bound
\begin{equation*}
\abs{\restrictN{\CutoffFc{}{}}{T} - \restrictN{\CutoffFc{}{}}{S}} \leq \frac{\abs{\dist[\Elements]{T}{\Patch{\Elements}{\ElementsB}} - \dist[\Elements]{S}{\Patch{\Elements}{\ElementsB}}}}{\epsilon} \stackrel{}{\leq} \frac{\dist[\Elements]{T}{S}}{\epsilon} \stackrel{\cref{Mesh_Metric_props}}{\cleq} \frac{\h{T}}{\epsilon} \ceq \frac{\h{T}}{\delta}.
\end{equation*}

We use the Clément operator $\fDef{J_{\Elements}}{\Lp{2}{\Omega}}{\Skp{1}{1}{\Elements}}$ from \cref{App_op_Clement} to define $\CutoffFc{\ElementsB}{\delta} := J_{\Elements} \CutoffFc{}{} \in \Skp{1}{1}{\Elements}$. For the support of $\CutoffFc{\ElementsB}{\delta}$ we compute
\begin{equation*}
\supp[\Elements]{\CutoffFc{\ElementsB}{\delta}} \stackrel{\cref{App_op_Clement_props}}{\subseteq} \Patch{\Elements}{\supp[\Elements]{\CutoffFc{}{}}} \subseteq \Patch{\Elements}{\inflateN{\Patch{\Elements}{\ElementsB}}{\epsilon}} \stackrel{\cref{Mesh_Metric_props}}{\subseteq} \inflateN{\ElementsB}{(1+\CShape^2)(\CShape \h{\ElementsB} + \epsilon)} \subseteq \inflateN{\ElementsB}{2\CShape^3 \h{\ElementsB} + \delta/2} \stackrel{\delta \cgeq \h{\ElementsB}}{\subseteq} \inflateN{\ElementsB}{\delta}.
\end{equation*}

From \cref{App_op_Clement_props} and $\restrictN{\CutoffFc{}{}}{\Patch{\Elements}{\ElementsB}} \equiv 1$ we get $\restrictN{\CutoffFc{\ElementsB}{\delta}}{\ElementsB} \equiv 1$. Moreover, $0 \leq \CutoffFc{}{} \leq 1$ yields $0 \leq \CutoffFc{\ElementsB}{\delta} \leq 1$. This implies, in particular, $\norm[\Lp{\infty}{\Omega}]{\CutoffFc{\ElementsB}{\delta}} \leq 1 \cleq \delta^{-1}$, where we used the assumption $\delta \cleq 1$. The remaining bound $\seminorm[\Wkp{1}{\infty}{\Omega}]{\CutoffFc{\ElementsB}{\delta}} \cleq \delta^{-1}$ follows from
\begin{equation*}
\forall T \in \Elements: \quad \quad \h{T} \seminorm[\Wkp{1}{\infty}{T}]{\CutoffFc{\ElementsB}{\delta}} \stackrel{\cref{App_op_Clement_props}}{\cleq} \max_{S \in \Patch{\Elements}{T}} \abs{\restrictN{\CutoffFc{}{}}{T} - \restrictN{\CutoffFc{}{}}{S}} \cleq \frac{\h{T}}{\delta}.
\end{equation*}

This finishes the proof.
\end{proof}

Given a cluster $\ElementsB \subseteq \Elements$ and a distance $\delta>0$, the discrete cut-off function $\CutoffFc{\ElementsB}{\delta}$ allows us to ``restrict'' a function $v \in \Skp{1}{p}{\Elements}$ to the subdomain $\bigcup\inflateN{\ElementsB}{\delta} \subseteq \Omega$ while preserving continuity. This can be achieved by simply multiplying $v$ with $\CutoffFc{\ElementsB}{\delta}$. Note that the product $\CutoffFc{\ElementsB}{\delta} v$ has polynomial degree $p+1$, rather than $p$. To mitigate this drawback, we can simply re-interpolate the result with an operator of order $p$.

\begin{definition} \label{App_op_Lagrange}
Let $p \geq 1$ and denote by $\fDef{\hat{I}^p}{\Ck{0}{\closureN{\refElement}}}{\Pp{p}{\refElement}}$ the \emph{(local) Lagrange interpolation operator} on the reference element $\refElement$. The \emph{(global) Lagrange interpolation operator} $\fDef{I_{\Elements}^p}{\CkPw{0}{\Elements}}{\Skp{0}{p}{\Elements}}$ is defined in a piecewise manner: For every $v \in \CkPw{0}{\Elements}$ and every $T \in \Elements$, we set
\begin{equation*}
\restrict{I_{\Elements}^p v}{T} := \hat{I}^p(v \circ F_T) \circ F_T^{-1}.
\end{equation*}
\end{definition}

In order to derive a useful stability estimate for $I_{\Elements}^p$, we use a standard inverse inequality (see, e.g., \cite{DFGHS01}).

\begin{lemma} \label{Inverse_inequality}
Let $k \geq \ell \geq 0$, $q \in [1,\infty]$ and $p \geq 0$. Then, for all discrete functions $v \in \Skp{0}{p}{\Elements}$ and all elements $T \in \Elements$, there holds the \emph{inverse inequality}
\begin{equation*}
\h{T}^k \seminorm[\Wkp{k}{q}{T}]{v} \cleq \h{T}^{\ell} \seminorm[\Wkp{\ell}{q}{T}]{v}.
\end{equation*}
\end{lemma}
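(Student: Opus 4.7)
The proof plan is the classical scaling argument that reduces the inequality on an arbitrary element $T$ to an equivalence-of-seminorms statement on the fixed reference element $\refElement$, where the space of polynomials of degree $p$ is finite-dimensional.

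First, I would restrict $v$ to $T$ and pull back along the affine diffeomorphism $F_T$ to obtain $\hat v := v \circ F_T \in \Pp{p}{\refElement}$. By \cref{Mesh} and shape regularity, the Jacobian $B_T := \gradN{F_T}$ satisfies
$$
\|B_T\| \ceq \h{T}, \qquad \|B_T^{-1}\| \ceq \h{T}^{-1}, \qquad \abs{\detN{B_T}} \ceq \h{T}^d,
$$
with constants depending only on $d$ and $\CShape$. The chain rule (for the $k$-th derivative, producing $k$ factors of $B_T^{-1}$) combined with the change-of-variables formula for the Lebesgue measure then yields, for every $m \geq 0$ and every $q \in [1,\infty]$,
$$
\seminorm[\Wkp{m}{q}{T}]{v} \ceq \h{T}^{d/q - m} \seminorm[\Wkp{m}{q}{\refElement}]{\hat v},
$$
with the convention $d/\infty = 0$.

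Applying this equivalence with $m = k$ and $m = \ell$ transforms both sides of the asserted inequality into expressions involving the common factor $\h{T}^{d/q}$, so the problem reduces to establishing the reference-element estimate
$$
\seminorm[\Wkp{k}{q}{\refElement}]{\hat v} \cleq \seminorm[\Wkp{\ell}{q}{\refElement}]{\hat v} \quad \text{for all } \hat v \in \Pp{p}{\refElement},
$$
with a constant independent of $\hat v$. Since $\Pp{p}{\refElement}$ is finite-dimensional, both quantities are continuous seminorms on a finite-dimensional vector space. Their kernels inside $\Pp{p}{\refElement}$ are nested: the kernel of $\seminorm[\Wkp{\ell}{q}{\refElement}]{\cdot}$ consists of polynomials of total degree strictly less than $\ell$ (or $\{0\}$ if $\ell = 0$), and because $k \geq \ell$ this kernel is contained in the kernel of $\seminorm[\Wkp{k}{q}{\refElement}]{\cdot}$. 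Passing to the quotient of $\Pp{p}{\refElement}$ by this common kernel, both seminorms descend to genuine norms on the same finite-dimensional quotient space and are therefore equivalent; the resulting inequality lifts back to $\Pp{p}{\refElement}$.

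Scaling back via the equivalence above then gives $\h{T}^k \seminorm[\Wkp{k}{q}{T}]{v} \cleq \h{T}^{\ell} \seminorm[\Wkp{\ell}{q}{T}]{v}$, with an implied constant depending only on $d$, $p$, $k$, $\ell$, $q$, and $\CShape$, in accordance with the conventions stated in \cref{Sec_Main_results}. There is no conceptual obstacle: the only real care required is the bookkeeping of the scaling factors under $F_T$ and the observation that the kernel inclusion for the two seminorms holds precisely because $k \geq \ell$.
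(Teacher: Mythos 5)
The paper itself does not prove \cref{Inverse_inequality}; it is stated as a standard fact with a pointer to the literature (e.g., \cite{DFGHS01}). Your proposal supplies the classical scaling-to-the-reference-element proof, which is exactly what one would expect that reference to contain, and the scaling bookkeeping $\seminorm[\Wkp{m}{q}{T}]{v} \ceq \h{T}^{d/q-m}\seminorm[\Wkp{m}{q}{\refElement}]{\hat v}$ is correct (the factor $\|B_T\|^{m}\cleq \h{T}^m$ from the chain rule and $\abs{\detN{B_T}}^{1/q}\ceq \h{T}^{d/q}$ from the change of variables, together with shape regularity). The reduction to the reference-element estimate $\seminorm[\Wkp{k}{q}{\refElement}]{\hat v}\cleq\seminorm[\Wkp{\ell}{q}{\refElement}]{\hat v}$ on $\Pp{p}{\refElement}$ is also the right move, and identifying the kernels as $\Pp{\ell-1}{\refElement}\subseteq\Pp{k-1}{\refElement}$ is correct.

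One step, however, is stated imprecisely and, as written, overclaims. After quotienting $\Pp{p}{\refElement}$ by $\ker\seminorm[\Wkp{\ell}{q}{\refElement}]{\cdot}=\Pp{\ell-1}{\refElement}$, only the $\ell$-seminorm becomes a genuine norm; for $k>\ell$ the $k$-seminorm still has the nontrivial kernel $\Pp{k-1}{\refElement}/\Pp{\ell-1}{\refElement}$ on the quotient, so the two are emphatically \emph{not} equivalent there (take $\ell=0$, $k=1$: constants are killed by $\seminorm[\Hk{1}{\refElement}]{\cdot}$ but not by $\norm[\Lp{q}{\refElement}]{\cdot}$). What you need, and what does follow, is the one-sided bound: because $\ker\seminorm[\Wkp{\ell}{q}{\refElement}]{\cdot}\subseteq\ker\seminorm[\Wkp{k}{q}{\refElement}]{\cdot}$, the $k$-seminorm descends to a well-defined \emph{seminorm} on the finite-dimensional quotient where the $\ell$-seminorm is a norm; any seminorm on a finite-dimensional normed space is continuous and hence bounded by a constant times the norm. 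This gives precisely $\seminorm[\Wkp{k}{q}{\refElement}]{\hat v}\cleq\seminorm[\Wkp{\ell}{q}{\refElement}]{\hat v}$ and nothing more. With that phrasing corrected, the argument is complete and sound.
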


The properties of the Lagrange interpolation operator $I_{\Elements}^p$ are very similar to those of the Clément operator $J_{\Elements}$ from \cref{App_op_Clement}. For the sake of completeness, we include them in the following lemma.

\begin{lemma} \label{App_op_Lagrange_props}
Let $p \geq 1$. The linear operator $I_{\Elements}^p$ has a local projection property: Given a cluster $\ElementsB \subseteq \Elements$ and a function $v \in \CkPw{0}{\Elements}$ with $v \in \Skp{0}{p}{\ElementsB}$, there holds $\restrict{I_{\Elements}^p v}{\ElementsB} = \restrictN{v}{\ElementsB}$. Furthermore, $I_{\Elements}^p$ preserves global continuity and homogeneous boundary values: For every $v \in \Ck{0}{\closureN{\Omega}}$, there holds $I_{\Elements}^p v \in \Skp{1}{p}{\Elements}$. Similarly, if $v \in \Ck{0}{\closureN{\Omega}}$ with $\restrictN{v}{\boundaryN{\Omega}} \equiv 0$, then $I_{\Elements}^p v \in \SkpO{1}{p}{\Elements}$. Moreover, $I_{\Elements}^p$ preserves discrete supports: For every $q \geq 0$ and every $v \in \Skp{0}{q}{\Elements}$, we have $\supp[\Elements]{I_{\Elements}^p v} \subseteq \supp[\Elements]{v}$. Finally, for all $q \geq 0$, $v \in \Skp{0}{q}{\Elements}$ and $T \in \Elements$, there hold the following stability and error estimates (with constants depending on $q$):
\begin{equation*}
\begin{array}{lrcl}
\forall m \in \set{0,\dots,p+1}: \quad & \seminorm[\Hk{m}{T}]{I_{\Elements}^p v} &\cleq& \seminorm[\Hk{m}{T}]{v}, \\
 & \sum_{\ell=0}^{p+1} \h{T}^{\ell} \seminorm[\Hk{\ell}{T}]{(\identity - I_{\Elements}^p)(v)} &\cleq& \h{T}^{p+1} \seminorm[\Hk{p+1}{T}]{v}.
\end{array}
\end{equation*}
\end{lemma}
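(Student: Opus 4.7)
The plan is to reduce each claim to a corresponding property of the local operator $\hat{I}^p$ on the reference element $\refElement$, and then transfer via the affine maps $F_T$.

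First I would dispatch the four structural properties. For the local projection, given $v\in\Skp{0}{p}{\ElementsB}$ and $T\in\ElementsB$, the pullback $v\circ F_T$ lies in $\Pp{p}{\refElement}$ and the Lagrange operator $\hat I^p$ acts as the identity on this space, so $\restrict{I_{\Elements}^p v}{T}=\restrictN{v}{T}$. Global continuity and homogeneous boundary values follow from the standard observation that $\hat I^p$ is determined by nodal values at a set of points containing the vertices of $\refElement$ on each face: two neighbouring elements share the Lagrange nodes on their common face, so the polynomial traces agree and the piecewise interpolant is globally continuous; nodes on $\boundaryN{\Omega}$ carry value $0$, yielding $I_{\Elements}^p v\in\SkpO{1}{p}{\Elements}$ when $v$ vanishes there. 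Support preservation is immediate: if $\restrictN{v}{T}\equiv 0$ then $v\circ F_T\equiv 0$, hence $\hat I^p(v\circ F_T)=0$, so $\restrict{I_\Elements^p v}{T}\equiv 0$.

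For the stability and error estimates, the key remark is that for fixed $q\ge 0$ the operator $\hat I^p\colon \Pp{q}{\refElement}\to\Pp{p}{\refElement}$ acts on a finite-dimensional space, so it is bounded in every Sobolev (semi)norm. This yields, on the reference element, the two local estimates
\begin{equation*}
\forall \hat v\in \Pp{q}{\refElement}:\quad \seminorm[\Hk{m}{\refElement}]{\hat I^p\hat v}\cleq \seminorm[\Hk{m}{\refElement}]{\hat v},\qquad \seminorm[\Hk{\ell}{\refElement}]{(\identity-\hat I^p)\hat v}\cleq \seminorm[\Hk{p+1}{\refElement}]{\hat v},
\end{equation*}
where the constants depend on $q$. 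The first is direct boundedness of a linear map between finite-dimensional normed spaces. The second uses that $\identity-\hat I^p$ vanishes on $\Pp{p}{\refElement}$, so it descends to a linear map on the quotient $\Pp{q}{\refElement}/\Pp{p}{\refElement}$, on which $\seminorm[\Hk{p+1}{\refElement}]{\cdot}$ is a norm; equivalence of norms on finite-dimensional spaces then gives the bound for every $\ell\in\{0,\dots,p+1\}$.

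Finally, I would transport these two reference estimates to a general $T\in\Elements$ via the affine change of variables $F_T$. A standard scaling argument produces the factors $\h{T}^{m-d/2}$ on both sides of the stability inequality, which cancel, leaving exactly the claimed $\seminorm[\Hk{m}{T}]{I_\Elements^p v}\cleq \seminorm[\Hk{m}{T}]{v}$; analogously for the error bound, the mismatched exponents $\ell$ and $p+1$ on the two sides produce the factor $\h{T}^{p+1-\ell}$, which, after multiplying by $\h{T}^\ell$ on the left, combines all $\ell$-terms into $\h{T}^{p+1}\seminorm[\Hk{p+1}{T}]{v}$. Summing over $\ell\in\{0,\dots,p+1\}$ is harmless since the number of terms is fixed. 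Shape-regularity ensures that the constants from the transformation of derivatives depend only on $\CShape$, $p$, $q$, and $d$, exactly as allowed by the paper's convention. No step is genuinely difficult; the only point requiring a little care is the error estimate, where the quotient-space argument is what pins down the $\Hk{p+1}$-seminorm on the right-hand side.
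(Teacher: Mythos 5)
Your proof is correct and follows the same general strategy as the paper (finite-dimensional arguments on the reference element, then an affine scaling argument), but you route the two estimates a bit differently. The paper proves the \emph{error} bound first, via norm equivalence with the combined norm $\hat v \mapsto \norm[\Lp{2}{\refElement}]{\hat{I}^p \hat v} + \seminorm[\Hk{p+1}{\refElement}]{\hat v}$ on $\Pp{q}{\refElement}$, then inserts $\hat v := \hat w - \hat{I}^p \hat w$ to kill the first term; it then derives the \emph{stability} bound indirectly, by the triangle inequality combined with the freshly proved error bound and the inverse inequality (\cref{Inverse_inequality}). You instead prove both bounds independently on the reference element: stability by direct boundedness in finite dimensions, and the error bound via a quotient-space argument on $\Pp{q}{\refElement}/\Pp{p}{\refElement}$. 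Both routes are valid; your quotient formulation of the error bound is an equally clean alternative to the paper's combined-norm trick, and your direct proof of stability avoids invoking the inverse inequality.

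One small point deserves care: in your stability step you speak of ``boundedness of a linear map between finite-dimensional \emph{normed} spaces,'' but for $m \geq 1$ the quantity $\seminorm[\Hk{m}{\refElement}]{\cdot}$ is only a seminorm, with nontrivial kernel $\Pp{m-1}{\refElement}$. The comparison $\seminorm[\Hk{m}{\refElement}]{\hat{I}^p \hat v} \cleq \seminorm[\Hk{m}{\refElement}]{\hat v}$ therefore requires that $\hat v \mapsto \hat{I}^p \hat v$ maps the kernel into itself. This does hold: if $m \leq p+1$ and $\seminorm[\Hk{m}{\refElement}]{\hat v} = 0$ then $\hat v \in \Pp{m-1}{\refElement} \subseteq \Pp{p}{\refElement}$, so $\hat{I}^p \hat v = \hat v$ by the projection property. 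With this one observation your argument is complete, and it is worth making it explicit since it is precisely the point where the restriction $m \leq p+1$ enters.
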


\begin{proof}
We briefly sketch the proof of the stability and error bounds: The mapping $v \mapsto \norm[\Lp{2}{\refElement}]{\hat{I}^p v} + \seminorm[\Hk{p+1}{\refElement}]{v}$ defines a norm on the finite-dimensional space $\Pp{q}{\refElement}$. Hence, by norm equivalence, $\norm[\Hk{p+1}{\refElement}]{v} \cleq \norm[\Lp{2}{\refElement}]{\hat{I}^p v} + \seminorm[\Hk{p+1}{\refElement}]{v}$ for all $v \in \Pp{q}{\refElement}$. Inserting $v := w-\hat{I}^p w$ for arbitrary $w \in \Pp{q}{\refElement}$ results in the bound $\norm[\Hk{p+1}{\refElement}]{w-\hat{I}^p w} \cleq \seminorm[\Hk{p+1}{\refElement}]{w}$. Finally, a standard scaling argument $\refElement \leftrightarrow T$ yields the desired error estimate on $T$. As for the stability bound, we perform a straightforward triangle inequality on $T$, reuse the already proven error bound and finish off with the inverse inequality from \cref{Inverse_inequality}.
\end{proof}

\begin{remark}
The fact that $I_{\Elements}^p$ preserves global continuity and homogeneous boundary values hinges on an implicit assumption about the (local) interpolation points used by the local Lagrange interpolation operator $\hat{I}^p$. Recall from \cref{Mesh} that the reference element $\refElement \subseteq \R^d$ is a simplex and thus delimited by $d+1$ hyperplanes. The interpolation points on each hyperplane $\hat{E}$ must be unisolvent for the space $\Pp{p}{\hat{E}}$. Then, in particular, every polynomial $v \in \Pp{p}{\refElement}$ vanishing at the interpolation points in $\hat{E}$ must already vanish everywhere on $\hat{E}$. This property readily implies that homogeneous boundary values are preserved by the global operator $I_{\Elements}^p$. Finally, the distribution of interpolation points on each hyperplane $\hat{E}$ must be ``symmetric''. More precisely, if two elements $T_1,T_2 \in \Elements$ share a common hyperplane, we require the corresponding interpolation points to align perfectly. In this case, using the same argument as before, the operator $I_{\Elements}^p$ preserves global continuity indeed.
\end{remark}

As our next step, we encapsulate the aforementioned ``cut-off'' process in a linear operator.

\begin{definition} \label{Cut_off_op_disc}
Let $\ElementsB \subseteq \Elements$ and $\delta>0$ with $4\CShape^3 \h{\ElementsB} \leq \delta \cleq 1$ and denote by $\CutoffFc{\ElementsB}{\delta} \in \Skp{1}{1}{\Elements}$ the discrete cut-off function from \cref{Cut_off_fct_disc}. Furthermore, denote by $\fDef{I_{\Elements}^p}{\CkPw{0}{\Elements}}{\Skp{0}{p}{\Elements}}$ the Lagrange interpolation operator from \cref{App_op_Lagrange}. We define the \emph{discrete cut-off operator}
\begin{equation*}
\fDefB[\CutoffOp{\ElementsB}{\delta}]{\Skp{1}{p}{\Elements}}{\Skp{1}{p}{\Elements}}{v}{I_{\Elements}^p(\CutoffFc{\ElementsB}{\delta} v)}.
\end{equation*}
\end{definition}

The discrete cut-off operator $\CutoffOp{\ElementsB}{\delta}$ inherits its core properties from $I_{\Elements}^p$.

\begin{lemma} \label{Cut_off_op_disc_props}
Let $\ElementsB \subseteq \Elements$ and $\delta>0$ with $4\CShape^3 \h{\ElementsB} \leq \delta \cleq 1$. For all $v \in \Skp{1}{p}{\Elements}$, the linear operator $\CutoffOp{\ElementsB}{\delta}$ has the cut-off property $\supp[\Elements]{\CutoffOp{\ElementsB}{\delta} v} \subseteq \inflateN{\ElementsB}{\delta}$ and the local projection property $\restrict{\CutoffOp{\ElementsB}{\delta} v}{\ElementsB} = \restrictN{v}{\ElementsB}$. Furthermore, $\CutoffOp{\ElementsB}{\delta}$ preserves homogeneous boundary values: For all $v \in \SkpO{1}{p}{\Elements}$, there holds $\CutoffOp{\ElementsB}{\delta} v \in \SkpO{1}{p}{\Elements}$. Finally, for every $v \in \Skp{1}{p}{\Elements}$ and every $T \in \Elements$, there holds the local stability estimate
\begin{equation*}
\norm[\Lp{2}{T}]{\CutoffOp{\ElementsB}{\delta} v} + \delta \seminorm[\Hk{1}{T}]{\CutoffOp{\ElementsB}{\delta} v} \cleq \norm[\Lp{2}{T}]{v} + \delta \seminorm[\Hk{1}{T}]{v}.
\end{equation*}
\end{lemma}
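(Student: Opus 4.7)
The plan is to derive each of the four assertions directly from the corresponding properties of the building blocks $\CutoffFc{\ElementsB}{\delta}$ (\cref{Cut_off_fct_disc}) and $I_{\Elements}^p$ (\cref{App_op_Lagrange_props}), so no genuinely new argument is needed; the work is essentially bookkeeping.

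\textbf{Cut-off property and projection property.} For (1), observe that $\CutoffFc{\ElementsB}{\delta} v$ lies in $\Skp{0}{p+1}{\Elements}$ with discrete support contained in $\supp[\Elements]{\CutoffFc{\ElementsB}{\delta}} \subseteq \inflateN{\ElementsB}{\delta}$ by \cref{Cut_off_fct_disc}. Since $I_{\Elements}^p$ preserves discrete supports by \cref{App_op_Lagrange_props}, the inclusion $\supp[\Elements]{\CutoffOp{\ElementsB}{\delta} v} \subseteq \inflateN{\ElementsB}{\delta}$ follows. For (2), on $\ElementsB$ we have $\restrictN{\CutoffFc{\ElementsB}{\delta}}{\ElementsB} \equiv 1$, so $\restrict{\CutoffFc{\ElementsB}{\delta} v}{\ElementsB} = \restrictN{v}{\ElementsB} \in \Skp{0}{p}{\ElementsB}$, and the local projection property of $I_{\Elements}^p$ from \cref{App_op_Lagrange_props} yields $\restrict{\CutoffOp{\ElementsB}{\delta} v}{\ElementsB} = \restrictN{v}{\ElementsB}$.

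\textbf{Preservation of boundary values.} For (3), if $v \in \SkpO{1}{p}{\Elements}$, then $v \in \Ck{0}{\closureN{\Omega}}$ with $\restrictN{v}{\boundaryN{\Omega}} \equiv 0$; since $\CutoffFc{\ElementsB}{\delta} \in \Skp{1}{1}{\Elements} \subseteq \Ck{0}{\closureN{\Omega}}$, the product $\CutoffFc{\ElementsB}{\delta} v$ belongs to $\Ck{0}{\closureN{\Omega}}$ and vanishes on $\boundaryN{\Omega}$. Invoking the final clause of \cref{App_op_Lagrange_props} for homogeneous boundary values, we conclude $\CutoffOp{\ElementsB}{\delta} v \in \SkpO{1}{p}{\Elements}$.

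\textbf{Local stability.} For (4), the stability bound of \cref{App_op_Lagrange_props} applied elementwise to the function $\CutoffFc{\ElementsB}{\delta} v \in \Skp{0}{p+1}{\Elements}$ gives, for $m \in \set{0,1}$,
\begin{equation*}
\seminorm[\Hk{m}{T}]{\CutoffOp{\ElementsB}{\delta} v} \cleq \seminorm[\Hk{m}{T}]{\CutoffFc{\ElementsB}{\delta} v}.
\end{equation*}
It therefore suffices to bound $\norm[\Lp{2}{T}]{\CutoffFc{\ElementsB}{\delta} v} + \delta \seminorm[\Hk{1}{T}]{\CutoffFc{\ElementsB}{\delta} v}$ by the right-hand side. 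The $\Lp{2}{T}$-part follows immediately from $0 \leq \CutoffFc{\ElementsB}{\delta} \leq 1$ (\cref{Cut_off_fct_disc}). For the $\Hk{1}{T}$-part, the product rule together with $\norm[\Lp{\infty}{T}]{\CutoffFc{\ElementsB}{\delta}} \leq 1$ and $\seminorm[\Wkp{1}{\infty}{T}]{\CutoffFc{\ElementsB}{\delta}} \cleq \delta^{-1}$ yields
\begin{equation*}
\seminorm[\Hk{1}{T}]{\CutoffFc{\ElementsB}{\delta} v} \leq \seminorm[\Wkp{1}{\infty}{T}]{\CutoffFc{\ElementsB}{\delta}} \norm[\Lp{2}{T}]{v} + \norm[\Lp{\infty}{T}]{\CutoffFc{\ElementsB}{\delta}} \seminorm[\Hk{1}{T}]{v} \cleq \delta^{-1} \norm[\Lp{2}{T}]{v} + \seminorm[\Hk{1}{T}]{v}.
\end{equation*}
Multiplying by $\delta$ and adding to the $\Lp{2}{T}$-estimate gives the claim. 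I do not anticipate any serious obstacle here; the only point demanding care is keeping track that $\CutoffFc{\ElementsB}{\delta} v$ has polynomial degree $p+1$ (not $p$), which is precisely the reason why the local interpolation operator $I_{\Elements}^p$ is applied in the definition of $\CutoffOp{\ElementsB}{\delta}$ and why the elementwise stability bound of $I_{\Elements}^p$ in \cref{App_op_Lagrange_props} was formulated for $\Skp{0}{q}{\Elements}$ with general $q$.
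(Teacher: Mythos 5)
Your proof is correct and follows essentially the same route as the paper: the first three properties are read off directly from \cref{Cut_off_fct_disc} and the support/projection/boundary-value properties of $I_{\Elements}^p$ in \cref{App_op_Lagrange_props} (the paper dispatches these in one sentence; you spell them out), and the local stability estimate is obtained by combining the $H^m$-stability of $I_{\Elements}^p$ on $\Skp{0}{p+1}{\Elements}$ with the Leibniz rule and the $W^{1,\infty}$-bounds on $\CutoffFc{\ElementsB}{\delta}$, exactly as in the paper's displayed chain of inequalities. Your closing remark about tracking the degree shift from $p$ to $p+1$ is precisely the point the paper flags by noting $\CutoffFc{\ElementsB}{\delta} v \in \Skp{1}{p+1}{\Elements}$ before invoking \cref{App_op_Lagrange_props}.
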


\begin{proof}
The cut-off property, the local projection property and the preservation of homogeneous boundary values follow directly from \cref{App_op_Lagrange_props} and \cref{Cut_off_fct_disc}. Finally, let $v \in \Skp{1}{p}{\Elements}$ and $T \in \Elements$. Note that $\CutoffFc{\ElementsB}{\delta} v \in \Skp{1}{p+1}{\Elements}$, i.e., we can use the stability estimate from \cref{App_op_Lagrange_props}:
\begin{equation*}
\sum_{\ell=0}^{1} \delta^{\ell} \seminorm[\Hk{\ell}{T}]{\CutoffOp{\ElementsB}{\delta} v} \cleq \sum_{\ell=0}^{1} \delta^{\ell} \seminorm[\Hk{\ell}{T}]{\CutoffFc{\ElementsB}{\delta} v} \cleq \sum_{\ell=0}^{1} \delta^{\ell} \sum_{i=0}^{\ell} \seminorm[\Wkp{\ell-i}{\infty}{T}]{\CutoffFc{\ElementsB}{\delta}} \seminorm[\Hk{i}{T}]{v} \stackrel{\cref{Cut_off_fct_disc}}{\cleq} \sum_{\ell=0}^{1} \delta^{\ell} \seminorm[\Hk{\ell}{T}]{v}.
\end{equation*}
\end{proof}

\subsection{The spaces of locally discrete harmonic functions} \label{SSec_Space_SkpOHarm}

In this subsection, we introduce the spaces of \emph{locally discrete harmonic functions}. As we already mentioned in \cref{SSec_Proof_overview}, they are chosen for three main reasons: To begin with, they fit in seamlessly with the discrete solution operator $\fDef{\SolutionOp{\Elements}}{\Lp{2}{\Omega}}{\SkpO{1}{p}{\Elements}}$ from \cref{Sol_op_disc}. Furthermore, as specified in \cref{Space_SkpOHarm_props}, they are invariant with respect to the discrete cut-off operators $\fDef{\CutoffOp{\ElementsB}{\delta}}{\Skp{1}{p}{\Elements}}{\Skp{1}{p}{\Elements}}$ from \cref{Cut_off_op_disc}. But most importantly, they contain functions whose $H^1$-norms can be bounded by $L^2$-norms with constants independent of $\h{}$, i.e., a \emph{discrete Caccioppoli inequality}.

\begin{definition} \label{Space_SkpOHarm}
For every $\ElementsB \subseteq \Elements$, we define the space of \emph{locally discrete harmonic functions}
\begin{equation*}
\SkpOHarm{1}{p}{\Elements}{\ElementsB} := \Set{u \in \SkpO{1}{p}{\Elements}}{\forall v \in \SkpO{1}{p}{\Elements} \,\, \text{with} \,\, \supp[\Elements]{v} \subseteq \ElementsB: \bilinear[a]{u}{v} = 0} \subseteq \SkpO{1}{p}{\Elements}.
\end{equation*}
\end{definition}

We summarize the first two main features of the spaces $\SkpOHarm{1}{p}{\Elements}{\ElementsB}$ in the next lemma, namely their relationships to the discrete solution operator $\fDef{\SolutionOp{\Elements}}{\Lp{2}{\Omega}}{\SkpO{1}{p}{\Elements}}$ and the discrete cut-off operators $\fDef{\CutoffOp{\ElementsB}{\delta}}{\Skp{1}{p}{\Elements}}{\Skp{1}{p}{\Elements}}$.

\begin{lemma} \label{Space_SkpOHarm_props}
The spaces of locally discrete harmonic functions are nested in the sense
\begin{equation*}
\forall \ElementsB \subseteq \ElementsB^+ \subseteq \Elements: \quad \quad \SkpOHarm{1}{p}{\Elements}{\ElementsB^+} \subseteq \SkpOHarm{1}{p}{\Elements}{\ElementsB}.
\end{equation*}

Furthermore, for all clusters $\ElementsB,\ElementsD \subseteq \Elements$ with $\ElementsB \cap \ElementsD = \emptyset$, the operator $\SolutionOp{\Elements}$ has the  mapping property
\begin{equation*}
\forall f \in \Lp{2}{\Omega} \,\, \text{with} \,\, \supp[\Elements]{f} \subseteq \ElementsD: \quad \quad \SolutionOp{\Elements} f \in \SkpOHarm{1}{p}{\Elements}{\ElementsB}.
\end{equation*}

Finally, for all $\ElementsB \subseteq \Elements$ and all $\delta>0$ with $4\CShape^3 \h{\ElementsB} \leq \delta \cleq 1$, we have the invariance
\begin{equation*}
\forall u \in \SkpOHarm{1}{p}{\Elements}{\ElementsB}: \quad \quad \CutoffOp{\ElementsB}{\delta} u \in \SkpOHarm{1}{p}{\Elements}{\ElementsB}.
\end{equation*}
\end{lemma}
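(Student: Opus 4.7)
The plan is to verify the three assertions separately; each reduces to an unpacking of definitions combined with a previously established property.

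\textbf{Nestedness.} Suppose $\ElementsB \subseteq \ElementsB^+$ and let $u \in \SkpOHarm{1}{p}{\Elements}{\ElementsB^+}$. For any test $v \in \SkpO{1}{p}{\Elements}$ with $\supp[\Elements]{v} \subseteq \ElementsB$, one certainly has $\supp[\Elements]{v} \subseteq \ElementsB^+$, so $\bilinear[a]{u}{v} = 0$ by hypothesis. Thus $u \in \SkpOHarm{1}{p}{\Elements}{\ElementsB}$.

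\textbf{Mapping property of $\SolutionOp{\Elements}$.} Fix $f \in \Lp{2}{\Omega}$ with $\supp[\Elements]{f} \subseteq \ElementsD$ and assume $\ElementsB \cap \ElementsD = \emptyset$. By \cref{Sol_op_disc}, for every $v \in \SkpO{1}{p}{\Elements}$ we have $\bilinear[a]{\SolutionOp{\Elements} f}{v} = \skalar[\Lp{2}{\Omega}]{f}{v}$. If in addition $\supp[\Elements]{v} \subseteq \ElementsB$, then on every element $T \in \Elements$ either $\restrictN{f}{T} \equiv 0$ (when $T \notin \ElementsD$) or $\restrictN{v}{T} \equiv 0$ (when $T \notin \ElementsB$), because $\ElementsB$ and $\ElementsD$ are disjoint. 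Hence $\skalar[\Lp{2}{\Omega}]{f}{v} = \sum_{T \in \Elements} \skalar[\Lp{2}{T}]{f}{v} = 0$, proving $\SolutionOp{\Elements} f \in \SkpOHarm{1}{p}{\Elements}{\ElementsB}$.

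\textbf{Invariance under the cut-off operator.} Let $u \in \SkpOHarm{1}{p}{\Elements}{\ElementsB}$ and set $w := \CutoffOp{\ElementsB}{\delta} u$. Because $\CutoffOp{\ElementsB}{\delta}$ preserves homogeneous boundary values (\cref{Cut_off_op_disc_props}) and $u \in \SkpO{1}{p}{\Elements}$, we have $w \in \SkpO{1}{p}{\Elements}$. The decisive point is the local projection property from \cref{Cut_off_op_disc_props}, namely $\restrict{w}{\ElementsB} = \restrictN{u}{\ElementsB}$; in particular $w$ and $u$ coincide pointwise, together with all derivatives, on every $T \in \ElementsB$. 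Now pick any test function $v \in \SkpO{1}{p}{\Elements}$ with $\supp[\Elements]{v} \subseteq \ElementsB$. Since $v$ vanishes on $\Elements \setminus \ElementsB$, the bilinear form reduces to element contributions over $\ElementsB$:
\begin{equation*}
\bilinear[a]{w}{v} = \sum_{T \in \ElementsB} \( \skalar[\Lp{2}{T}]{a_1 \gradN{w}}{\gradN{v}} + \skalar[\Lp{2}{T}]{a_2 \* \gradN{w}}{v} + \skalar[\Lp{2}{T}]{a_3 w}{v} \).
\end{equation*}
Replacing $w$ by $u$ on each $T \in \ElementsB$ is justified by the local projection property, so $\bilinear[a]{w}{v} = \bilinear[a]{u}{v} = 0$ by \cref{Space_SkpOHarm}. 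Hence $w \in \SkpOHarm{1}{p}{\Elements}{\ElementsB}$.

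No step presents a genuine obstacle; the only care needed is to track that the discrete support of $v$ really does localise the integral to $\ElementsB$, allowing the pointwise identity $w = u$ on $\ElementsB$ (and therefore on the support of the integrand) to be invoked. All three parts then follow immediately.
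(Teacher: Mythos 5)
Your proof is correct and follows essentially the same route as the paper: the nestedness is definitional, the mapping property comes from the disjointness of the discrete supports of $f$ and $v$, and the invariance hinges on the local projection property of $\CutoffOp{\ElementsB}{\delta}$ together with the localisation of $\bilinear[a]{\cdot}{v}$ to $\ElementsB$ via $\supp[\Elements]{v} \subseteq \ElementsB$. The only detail you make explicit that the paper leaves implicit is the check that $\CutoffOp{\ElementsB}{\delta} u$ again lies in $\SkpO{1}{p}{\Elements}$, which is a fine addition.
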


\begin{proof}
The inclusion $\SkpOHarm{1}{p}{\Elements}{\ElementsB^+} \subseteq \SkpOHarm{1}{p}{\Elements}{\ElementsB}$ follows directly from the definition of the spaces. As for the mapping properties of $\SolutionOp{\Elements}$, let $f \in \Lp{2}{\Omega}$ with $\supp[\Elements]{f} \subseteq \ElementsD$. Then, for every $v \in \SkpO{1}{p}{\Elements}$ with $\supp[\Elements]{v} \subseteq \ElementsB$, we have
\begin{equation*}
\bilinear[a]{\SolutionOp{\Elements} f}{v} \stackrel{\cref{Sol_op_disc}}{=} \skalar[\Lp{2}{\ElementsD \cap \ElementsB}]{f}{v} \stackrel{\ElementsB \cap \ElementsD = \emptyset}{=} 0.
\end{equation*}

Finally, consider a function $u \in \SkpOHarm{1}{p}{\Elements}{\ElementsB}$ and an arbitrary $v \in \SkpO{1}{p}{\Elements}$ with $\supp[\Elements]{v} \subseteq \ElementsB$. Then, 
\begin{eqnarray*}
\bilinear[a]{\CutoffOp{\ElementsB}{\delta} u}{v} &\stackrel{\cref{Bilinear_form}}{=}& \skalar[\Lp{2}{\ElementsB}]{a_1 \nabla \CutoffOp{\ElementsB}{\delta} u}{\gradN{v}} + \skalar[\Lp{2}{\ElementsB}]{a_2 \cdot \nabla\CutoffOp{\ElementsB}{\delta} u}{v} + \skalar[\Lp{2}{\ElementsB}]{a_3 \CutoffOp{\ElementsB}{\delta} u}{v} \\
&\stackrel{\cref{Cut_off_op_disc_props}}{=}& \skalar[\Lp{2}{\ElementsB}]{a_1 \gradN{u}}{\gradN{v}} + \skalar[\Lp{2}{\ElementsB}]{a_2 \cdot \gradN{u}}{v} + \skalar[\Lp{2}{\ElementsB}]{a_3 u}{v} \\
&=& \bilinear[a]{u}{v} \\
&=& 0.
\end{eqnarray*}

This gives $\CutoffOp{\ElementsB}{\delta} u \in \SkpOHarm{1}{p}{\Elements}{\ElementsB}$, which concludes the proof.
\end{proof}

Next, we turn our attention to the \emph{discrete Caccioppoli inequality}. In a nutshell, it will allow us to bound an $H^1$-norm on a cluster $\ElementsB \subseteq \Elements$ by an $L^2$-norm on the slightly larger cluster $\inflateN{\ElementsB}{\delta}$. Obviously, this can be true only for a certain subspace $V \subseteq \Skp{1}{p}{\Elements}$. In our setting, this is the space of locally discrete harmonic functions $\SkpOHarm{1}{p}{\Elements}{\inflateN{\ElementsB}{\delta}}$ from \cref{Space_SkpOHarm}. We can interpret the discrete Caccioppoli inequality as an improved version of the inverse inequality from \cref{Inverse_inequality}, which bounds an $H^1$-seminorm by an $L^2$-norm, too. This time, however, the prefactor $\h{}$ of the $H^1$-seminorm can be increased to a (possibly much) bigger parameter $\delta \gg \h{}$.

\begin{lemma} \label{Space_SkpOHarm_Cacc}
Let $\ElementsB \subseteq \Elements$ and $\delta>0$ with $4\CShape^3 \h{\ElementsB} \leq \delta \cleq 1$. Then, for every $u \in \SkpOHarm{1}{p}{\Elements}{\inflateN{\ElementsB}{\delta}}$, there holds the \emph{discrete Caccioppoli inequality}
\begin{equation*}
\delta \seminorm[\Hk{1}{\ElementsB}]{u} \cleq \norm[\Lp{2}{\inflateN{\ElementsB}{\delta}}]{u}.
\end{equation*}
\end{lemma}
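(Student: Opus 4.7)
My plan is to adapt the classical Caccioppoli argument to the discrete setting. For any pair of radii $0 \leq r < R \leq \delta$ satisfying the admissibility constraint $R - r \cgeq 4\CShape^3 \h{\inflateN{\ElementsB}{R}}$ required by \cref{Cut_off_fct_disc}, I would fix the discrete cut-off $\eta := \CutoffFc{\inflateN{\ElementsB}{r}}{R-r}$ (so that $\eta \equiv 1$ on $\inflateN{\ElementsB}{r}$, $\supp[\Elements]{\eta} \subseteq \inflateN{\ElementsB}{R}$, $0 \leq \eta \leq 1$ and $\norm[\Wkp{1}{\infty}{\Omega}]{\eta} \cleq (R-r)^{-1}$) and take $v := I_{\Elements}^p(\eta^2 u) \in \SkpO{1}{p}{\Elements}$ as test function. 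The function $\eta^2 u$ is globally continuous and vanishes on $\boundaryN{\Omega}$ because $u$ does, and by the support preservation in \cref{App_op_Lagrange_props} one has $\supp[\Elements]{v} \subseteq \supp[\Elements]{\eta} \subseteq \inflateN{\ElementsB}{\delta}$, so that the defining property of $\SkpOHarm{1}{p}{\Elements}{\inflateN{\ElementsB}{\delta}}$ yields $\bilinear[a]{u}{v} = 0$.

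A direct product-rule computation gives the algebraic identity $\bilinear[a]{\eta u}{\eta u} = \bilinear[a]{u}{\eta^2 u} + \int_\Omega u^2 a_1 \nabla\eta \cdot \nabla\eta + \int_\Omega \eta u^2 a_2 \cdot \nabla\eta$, and substituting $\bilinear[a]{u}{\eta^2 u} = \bilinear[a]{u}{\eta^2 u - v}$ together with the coercivity from \cref{Bilinear_form_props} produces the starting estimate
\begin{equation*}
\seminorm[\Hk{1}{\inflateN{\ElementsB}{r}}]{u}^2 \leq \seminorm[\Hk{1}{\Omega}]{\eta u}^2 \cleq \bilinear[a]{\eta u}{\eta u} \cleq |\bilinear[a]{u}{\eta^2 u - v}| + (R-r)^{-2} \norm[\Lp{2}{\inflateN{\ElementsB}{R}}]{u}^2.
\end{equation*}
The interpolation error $\eta^2 u - v = (\identity - I_{\Elements}^p)(\eta^2 u)$ I would bound elementwise by a Leibniz expansion that uses $\seminorm[\Wkp{k}{\infty}{T}]{\eta^2} \cleq (R-r)^{-k}$ for $k \leq 2$ and the inverse estimate of \cref{Inverse_inequality} applied to $u|_T \in \Pp{p}{T}$; combining this with the Lagrange error estimate of \cref{App_op_Lagrange_props} and the mesh control $\h{T} \cleq R-r$ (from admissibility and \cref{Mesh_Metric_props}) yields the key bound $\seminorm[\Hk{1}{T}]{\eta^2 u - v} \cleq (R-r)^{-1} \norm[\Lp{2}{T}]{u}$.

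The essential structural observation is that $I_{\Elements}^p$ reproduces $\Pp{p}{T}$ elementwise, so $\eta^2 u - v$ vanishes on every element on which $\eta|_T$ is constant (in particular wherever $\eta \equiv 1$), and its discrete support is therefore confined to the thin transition annulus in $\inflateN{\ElementsB}{R} \backslash \inflateN{\ElementsB}{r}$. Cauchy-Schwarz combined with this localization and Young's inequality with a sufficiently small parameter absorbs the resulting $H^1$-contribution up to a factor $\theta < 1/4$, delivering the hole-filled Caccioppoli recursion
\begin{equation*}
\seminorm[\Hk{1}{\inflateN{\ElementsB}{r}}]{u}^2 \leq \theta \seminorm[\Hk{1}{\inflateN{\ElementsB}{R}}]{u}^2 + C (R-r)^{-2} \norm[\Lp{2}{\inflateN{\ElementsB}{R}}]{u}^2.
\end{equation*}
The main obstacle is closing this recursion on a non-uniform mesh, since the cut-off only exists when $R-r$ exceeds a local mesh-width threshold. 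I would therefore run a Giaquinta-type iteration on $f(r) := \seminorm[\Hk{1}{\inflateN{\ElementsB}{r}}]{u}^2$, iterating geometrically between $0$ and $\delta$ through $K \ceq \log(\delta/\hMin{\inflateN{\ElementsB}{\delta}})$ admissible doubling steps and using the inverse inequality from \cref{Inverse_inequality} at the finest scale to absorb the residual $\theta^K f(\delta)$ into $\delta^{-2} \norm[\Lp{2}{\inflateN{\ElementsB}{\delta}}]{u}^2$; the choice $\theta < 2^{-2}$ keeps the geometric sum of local contributions bounded, so that the iteration terminates at the asserted bound $\delta \seminorm[\Hk{1}{\ElementsB}]{u} \cleq \norm[\Lp{2}{\inflateN{\ElementsB}{\delta}}]{u}$.
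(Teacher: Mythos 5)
Your plan correctly identifies the right ingredients (the discrete cut-off of \cref{Cut_off_fct_disc}, testing with $v = I_{\Elements}^p(\eta^2 u)$ and invoking the orthogonality, the Lagrange error estimate of \cref{App_op_Lagrange_props}, and the inverse inequality of \cref{Inverse_inequality}), but the way you bound the interpolation error forces a hole--filling recursion that, in the discrete non-uniform setting, cannot actually be iterated. The paper's proof is a single-shot argument with \emph{exact} absorption, and the hypothesis $4\CShape^3\h{\ElementsB}\leq\delta$ is precisely what makes one application of the cut-off possible; no iteration is needed or wanted.

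Here is the decisive difference. After a Leibniz expansion, you bound $\seminorm[\Hk{1}{T}]{\eta^2 u - v}\cleq (R-r)^{-1}\norm[\Lp{2}{T}]{u}$, which discards the structure of the factors $\eta$ and $\nabla\eta$: all $\eta$-factors are estimated by $1$. As a consequence, $\abs{\bilinear[a]{u}{\eta^2 u - v}}\cleq (R-r)^{-1}\sum_T\norm[\Hk{1}{T}]{u}\norm[\Lp{2}{T}]{u}$, and the $\norm[\Hk{1}{T}]{u}$ factors are supported on the transition annulus, where $\eta$ can be small. Young's inequality therefore produces the term $\theta\seminorm[\Hk{1}{\inflateN{\ElementsB}{R}}]{u}^2$ on a \emph{strictly larger} set, which cannot be absorbed into $\seminorm[\Hk{1}{\inflateN{\ElementsB}{r}}]{u}^2$. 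That is the source of your recursion. The paper instead proves by induction on $p$ the structured bound
$\h{T}^{p+1}\seminorm[\Hk{p+1}{T}]{\eta^2 u}\cleq \h{T}^2\seminorm[\Wkp{1}{\infty}{\Omega}]{\eta}\left(\norm[\Lp{2}{T}]{u\nabla\eta}+\norm[\Lp{2}{T}]{\eta\nabla u}\right)$,
which retains one explicit factor of $\eta$ or $\nabla\eta$ in every term. Combined with the inverse inequality $\h{T}\norm[\Hk{1}{T}]{u}\cleq\norm[\Lp{2}{T}]{u}$, the whole estimate for $\bilinear[a]{u}{\eta^2 u - v}$ is then expressed through $\norm[\Lp{2}{\Omega}]{u\nabla\eta}$ (controlled by $\delta^{-1}\norm[\Lp{2}{\inflateN{\ElementsB}{\delta}}]{u}$) and $\norm[\Lp{2}{\Omega}]{\eta\nabla u}$ (the absorbable quantity). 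For the coercive lower bound, the paper uses only the pointwise coercivity of $a_1$ to get $\norm[\Lp{2}{\Omega}]{\eta\nabla u}^2\cleq\skalar[\Lp{2}{\Omega}]{a_1\eta\nabla u}{\eta\nabla u}$, avoiding the $\bilinear[a]{\eta u}{\eta u}$ expansion. Since $\norm[\Lp{2}{\Omega}]{\eta\nabla u}^2$ appears on both sides, Young's inequality with arbitrarily small $\epsilon$ yields exact absorption and hence the result in one step.

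Your fallback iteration also has a concrete gap. The cut-off $\CutoffFc{\inflateN{\ElementsB}{r_k}}{r_{k+1}-r_k}$ exists only if $r_{k+1}-r_k\geq 4\CShape^3\h{\inflateN{\ElementsB}{r_k}}$. By \cref{Mesh_Metric_props}, $\h{\inflateN{\ElementsB}{r}}\leq\max\set{\h{\ElementsB},\CShape r}$ can be as large as $\CShape r$: if the mesh coarsens as one moves away from $\ElementsB$ (which is permitted for meshes of locally bounded cardinality), the admissibility constraint forces $r_{k+1}-r_k\cgeq r_k$, i.e., the radii must grow geometrically. The geometric annulus refinement $r_{k+1}-r_k\sim\delta 2^{-k}$ that your sum $\sum_k\theta^k(r_{k+1}-r_k)^{-2}$ requires is then simply not admissible for $k\geq 1$, and the alternative ``coarsening'' sequence produces a dominant term $\sim\h{\ElementsB}^{-2}\norm{u}^2$, which is far too large compared to the target $\delta^{-2}\norm{u}^2$. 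So the iteration can break down precisely on the non-uniform meshes the lemma is designed to handle.
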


\begin{proof}
First off, an induction on $p \geq 1$ yields the following estimate: 
For every $\CutoffFc{}{} \in \Skp{0}{1}{\Elements}$, $u \in \Skp{0}{p}{\Elements}$ and $T \in \Elements$,
\begin{equation*}
\h{T}^{p+1} \seminorm[\Hk{p+1}{T}]{{\CutoffFc{}{}}^2 u} \cleq \h{T}^2 \seminorm[\Wkp{1}{\infty}{\Omega}]{\CutoffFc{}{}} ( \norm[\Lp{2}{T}]{u \gradN{\CutoffFc{}{}}} + \norm[\Lp{2}{T}]{\CutoffFc{}{} \gradN{u}} ).
\end{equation*}

In the base case $p=1$, the second-order derivatives in $\seminorm[\Hk{2}{T}]{{\CutoffFc{}{}}^2 u}$ can be computed explicitly. Since $\CutoffFc{}{}, u \in \Pp{1}{T}$, the terms containing $\DN{\alpha}{\CutoffFc{}{}}$ or $\DN{\alpha}{u}$ with $\abs{\alpha}=2$ are not present. In the induction step $p \mapsto p+1$, we estimate $\seminorm[\Hk{p+2}{T}]{{\CutoffFc{}{}}^2 u} \cleq \sum_i \seminorm[\Hk{p+1}{T}]{\CutoffFc{}{} (\partialN{i}{\CutoffFc{}{}}) u} + \seminorm[\Hk{p+1}{T}]{{\CutoffFc{}{}}^2 (\partialN{i}{u})}$. For the first summand, we use the inverse inequality \cref{Inverse_inequality} and get $\seminorm[\Hk{p+1}{T}]{\CutoffFc{}{} (\partialN{i}{\CutoffFc{}{}}) u} \cleq \h{T}^{-p} \seminorm[\Hk{1}{T}]{\CutoffFc{}{} (\partialN{i}{\CutoffFc{}{}}) u}$. Again, we can expand the derivatives explicitly and cancel all terms containing second order derivatives of $\CutoffFc{}{} \in \Pp{1}{T}$. The second summand is amenable to the induction hypothesis: $\seminorm[\Hk{p+1}{T}]{{\CutoffFc{}{}}^2 (\partialN{i}{u})} \cleq \h{T}^{1-p} \seminorm[\Wkp{1}{\infty}{\Omega}]{\CutoffFc{}{}} ( \norm[\Lp{2}{T}]{(\partialN{i}{u}) \gradN{\CutoffFc{}{}}} + \norm[\Lp{2}{T}]{\CutoffFc{}{} \grad{\partialN{i}{u}}} )$. These terms can be treated with the fact $\gradN{\CutoffFc{}{}} \equiv \mathrm{const}$, the identity $\CutoffFc{}{} \grad{\partialN{i}{u}} = \partial{i}{\CutoffFc{}{} \gradN{u}} - (\partialN{i}{\CutoffFc{}{}}) \gradN{u}$ and the inverse inequality \cref{Inverse_inequality} once again.

Now, let us turn our attention to the discrete Caccioppoli inequality itself. For this purpose, let $\ElementsB \subseteq \Elements$ and $\delta>0$ with $4\CShape^3 \h{\ElementsB} \leq \delta \cleq 1$. We denote by $\CutoffFc{}{} := \CutoffFc{\ElementsB}{\delta} \in \Skp{1}{1}{\Elements}$ the discrete cut-off function from \cref{Cut_off_fct_disc} and by $\fDef{I_{\Elements}^p}{\CkPw{0}{\Elements}}{\Skp{0}{p}{\Elements}}$ the Lagrange interpolation operator from \cref{App_op_Lagrange}. Furthermore, let $u \in \SkpOHarm{1}{p}{\Elements}{\inflateN{\ElementsB}{\delta}}$. The key step of the proof is to exploit the orthogonality $\bilinear[a]{u}{v} = 0$ for some carefully chosen test function $v \in \SkpO{1}{p}{\Elements}$ with $\supp[\Elements]{v} \subseteq \inflateN{\ElementsB}{\delta}$. From \cref{App_op_Lagrange_props} and \cref{Cut_off_fct_disc} we know that $v := I_{\Elements}^p({\CutoffFc{}{}}^2 u)$ satisfies both $v \in \SkpO{1}{p}{\Elements}$ and $\supp[\Elements]{v} \subseteq \supp[\Elements]{\CutoffFc{}{}} \subseteq \inflateN{\ElementsB}{\delta}$, i.e., we can use $v$ as said test function. This results in the following bound:
\begin{eqnarray*}
\bilinear[a]{u}{{\CutoffFc{}{}}^2 u} &=& \bilinear[a]{u}{(\identity - I_{\Elements}^p)({\CutoffFc{}{}}^2 u)} \\
&\stackrel{\cref{Bilinear_form}}{\cleq}& \sum_{T \in \inflateN{\ElementsB}{\delta}} \norm[\Hk{1}{T}]{u} \norm[\Hk{1}{T}]{(\identity - I_{\Elements}^p)({\CutoffFc{}{}}^2 u)} \\
&\stackrel{\cref{App_op_Lagrange_props}}{\cleq}& \sum_{T \in \inflateN{\ElementsB}{\delta}} \norm[\Hk{1}{T}]{u} \h{T}^p \seminorm[\Hk{p+1}{T}]{{\CutoffFc{}{}}^2 u} \\
&\cleq& \seminorm[\Wkp{1}{\infty}{\Omega}]{\CutoffFc{}{}} \sum_{T \in \inflateN{\ElementsB}{\delta}} \h{T} \norm[\Hk{1}{T}]{u} ( \norm[\Lp{2}{T}]{u \gradN{\CutoffFc{}{}}} + \norm[\Lp{2}{T}]{\CutoffFc{}{} \gradN{u}} ) \\
&\stackrel{\cref{Inverse_inequality}}{\cleq}& \seminorm[\Wkp{1}{\infty}{\Omega}]{\CutoffFc{}{}} \norm[\Lp{2}{\inflateN{\ElementsB}{\delta}}]{u} ( \norm[\Lp{2}{\Omega}]{u \gradN{\CutoffFc{}{}}} + \norm[\Lp{2}{\Omega}]{\CutoffFc{}{} \gradN{u}} ).
\end{eqnarray*}

On the other hand, using the coercivity of the PDE coefficient $a_1$ in the bilinear form $\bilinear[a]{\cdot}{\cdot}$, cf. \cref{SSec_Model_problem}, we can expand the term $\bilinear[a]{u}{{\CutoffFc{}{}}^2 u}$ and rearrange the summands:
\begin{eqnarray*}
\norm[\Lp{2}{\Omega}]{\CutoffFc{}{} \gradN{u}}^2 &\cleq& \skalar[\Lp{2}{\Omega}]{a_1 \CutoffFc{}{} \gradN{u}}{\CutoffFc{}{} \gradN{u}} \\
&\stackrel{\cref{Bilinear_form}}{=}& \bilinear[a]{u}{{\CutoffFc{}{}}^2 u} - 2\skalar[\Lp{2}{\Omega}]{a_1 \CutoffFc{}{} \gradN{u}}{u \gradN{\CutoffFc{}{}}} - \skalar[\Lp{2}{\Omega}]{a_2 \cdot \gradN{u}}{{\CutoffFc{}{}}^2 u} - \skalar[\Lp{2}{\Omega}]{a_3 u}{{\CutoffFc{}{}}^2 u} \\
&\cleq& \seminorm[\Wkp{1}{\infty}{\Omega}]{\CutoffFc{}{}} \norm[\Lp{2}{\inflateN{\ElementsB}{\delta}}]{u} ( \norm[\Lp{2}{\Omega}]{u \gradN{\CutoffFc{}{}}} + \norm[\Lp{2}{\Omega}]{\CutoffFc{}{} \gradN{u}} ) \\
&& \hspace{10em} + \norm[\Lp{2}{\Omega}]{\CutoffFc{}{} \gradN{u}} \norm[\Lp{2}{\Omega}]{u \gradN{\CutoffFc{}{}}} + \norm[\Lp{2}{\Omega}]{\CutoffFc{}{} \gradN{u}} \norm[\Lp{2}{\Omega}]{\CutoffFc{}{} u} + \norm[\Lp{2}{\Omega}]{\CutoffFc{}{} u}^2 \\
&\stackrel{\forall \epsilon>0}{\leq}& C_\epsilon\norm[\Wkp{1}{\infty}{\Omega}]{\CutoffFc{}{}}^2 \norm[\Lp{2}{\inflateN{\ElementsB}{\delta}}]{u}^2 + \epsilon \norm[\Lp{2}{\Omega}]{\CutoffFc{}{} \gradN{u}}^2.
\end{eqnarray*}

Finally, since the parameter $\epsilon>0$ from Young's inequality can be chosen arbitrarily small, we can absorb the last summand of the right-hand side in the left-hand side of the overall inequality. We end up with
\begin{equation*}
\seminorm[\Hk{1}{\ElementsB}]{u} \stackrel{\restrictN{\CutoffFc{}{}}{\ElementsB} \equiv 1}{\leq} \norm[\Lp{2}{\Omega}]{\CutoffFc{}{} \gradN{u}} \cleq \norm[\Wkp{1}{\infty}{\Omega}]{\CutoffFc{}{}} \norm[\Lp{2}{\inflateN{\ElementsB}{\delta}}]{u} \stackrel{\cref{Cut_off_fct_disc}}{\cleq} \frac{1}{\delta} \norm[\Lp{2}{\inflateN{\ElementsB}{\delta}}]{u}.
\end{equation*}

This concludes the proof of the discrete Caccioppoli inequality.
\end{proof}

\subsection{The single- and multi-step coarsening operators} \label{SSec_Coarse_ops}

In this subsection, we do the actual work in the construction of the subspace $V_{\ElementsB,\ElementsD,L} \subseteq \SkpO{1}{p}{\Elements}$ from \cref{SSec_Proof_overview}. We design the so called \emph{single-} and \emph{multi-step coarsening operators}. For given $\ElementsB \subseteq \Elements$, $\delta>0$ and $u \in \SkpOHarm{1}{p}{\Elements}{\inflateN{\ElementsB}{\delta}}$, the single-step coarsening operator $\CoarseningOp{\ElementsB}{\delta}$ produces a ``coarse'' approximation $\CoarseningOp{\ElementsB}{\delta} u \in \SkpOHarm{1}{p}{\Elements}{\ElementsB}$ with an error $\norm[\Lp{2}{\ElementsB}]{u - \CoarseningOp{\ElementsB}{\delta} u} \leq 2^{-1} \norm[\Lp{2}{\inflateN{\ElementsB}{\delta}}]{u}$. The prefactor $2^{-1} \in (0,1)$ is essential, as it produces an exponential factor $2^{-L}$ when $L \in \N$ single-step coarsening operators are combined in a specific manner. This is precisely the idea behind the multi-step coarsening operator $\CoarseningOp{\ElementsB}{\delta,L}$. Given a function $u \in \SkpOHarm{1}{p}{\Elements}{\inflateN{\ElementsB}{\delta L}}$, it produces a ``coarse'' approximation $\CoarseningOp{\ElementsB}{\delta,L} u \in \SkpOHarm{1}{p}{\Elements}{\ElementsB}$ with an error $\norm[\Lp{2}{\ElementsB}]{u - \CoarseningOp{\ElementsB}{\delta,L} u} \leq 2^{-L} \norm[\Lp{2}{\inflateN{\ElementsB}{\delta L}}]{u}$.

As our construction of the single-step coarsening operator in \cref{Coarsening_op_single} is quite technical, we would like to reveal the underlying deas first: Assume for a moment that $\Elements$ is uniform, i.e. $\h{\Elements} \ceq \hMin{\Elements}$. Then, a function $u \in \SkpOHarm{1}{p}{\Elements}{\inflateN{\ElementsB}{\delta}}$ is described by up to $\dimN{\Skp{0}{p}{\Elements}} \ceq \cardN{\Elements} \ceq \h{\Elements}^{-d}$ degrees of freedom. In order to reduce this number, we could approximate $u \approx \Pi_{\ElementsS}^p u \in \Skp{0}{p}{\ElementsS}$, where $\ElementsS \subseteq \Pow{\Omega}$ is a second uniform mesh and where $\fDef{\Pi_{\ElementsS}^p}{\Lp{2}{\Omega}}{\Skp{0}{p}{\ElementsS}}$ is some kind of approximation operator. As long as $\ElementsS$ is coarser than $\Elements$, i.e. $\h{\ElementsS} \cgeq \h{\Elements}$, this provides a reduction of the dimension. On the other hand, the typical error bound $\norm[\Lp{2}{\Omega}]{u - \Pi_{\ElementsS}^p u} \cleq H \seminorm[\Hk{1}{\Omega}]{u}$ involves an $H^1$-norm on the right-hand side. In order to get rid of the $H^1$-norm, we want to apply the discrete Caccioppoli inequality, \cref{Space_SkpOHarm_Cacc}. For this to work, however, we first need to reduce the global quantity $H \seminorm[\Hk{1}{\Omega}]{u}$ to the local quantity $H \seminorm[\Hk{1}{\ElementsB}]{u}$. This can be done using the discrete cut-off operator $\CutoffOp{\ElementsB}{\delta}$ from \cref{Cut_off_op_disc}. Finally, the combined operator $\fDef{\Pi_{\ElementsS}^p \CutoffOp{\ElementsB}{\delta}}{\SkpOHarm{1}{p}{\Elements}{\inflateN{\ElementsB}{\delta}}}{\Skp{0}{p}{\ElementsS}}$ only lacks one more thing: It does not necessarily map into the space $\SkpOHarm{1}{p}{\Elements}{\ElementsB}$, which is a critical requirement, because we want to iterate the argument by plugging the remainder $\tilde{u} := u - \CoarseningOp{\ElementsB}{\delta} u$ of one single-step coarsening operator  into another one. Thankfully, we can simply append the orthogonal projection $\fDef{P_{\ElementsB}}{\Lp{2}{\Omega}}{\SkpOHarm{1}{p}{\Elements}{\ElementsB}}$ without losing any of the aforementioned properties.

In the next lemma we provide a construction for the second, coarser mesh $\ElementsS \subseteq \Pow{\Omega}$:

\begin{lemma} \label{Uniform_mesh_at_h}
Let $\ElementsS_0 \subseteq \Pow{\Omega}$ be an arbitrary mesh and $(\ElementsS_\ell)_{\ell \in \N_0}$ be the corresponding sequence of uniform refinements. For every $H>0$, there exists an $\ElementsS \in (\ElementsS_\ell)_{\ell \in \N_0}$ with $\CShape(\ElementsS) = C(\Omega)$ and $C(\Omega) H \leq \hMin{\ElementsS} \leq \h{\ElementsS} \leq H$. In particular, $\ElementsS$ is uniform in the sense of \cref{Mesh_uniform}.
\end{lemma}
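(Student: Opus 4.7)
The plan is to construct $\ElementsS$ by picking an appropriate level of the uniform refinement sequence. The key point is that a uniform refinement scheme (for instance red refinement, or, in dimension one, bisection) produces shape regular meshes with a shape regularity constant depending only on the starting mesh $\ElementsS_0$, and each refinement step scales both $\h{\ElementsS_\ell}$ and $\hMin{\ElementsS_\ell}$ by a fixed factor (namely $1/2$). Since $\ElementsS_0$ can be chosen once and for all as a fixed triangulation of $\Omega$, all geometric constants that appear will depend only on $\Omega$.

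First I would fix the reference mesh $\ElementsS_0$ of $\Omega$, which exists by the assumption that $\Omega$ is a bounded polyhedral Lipschitz domain. The shape regularity constant of $\ElementsS_0$ depends only on $\Omega$. Next, I would recall (or just cite as standard) that uniform refinement preserves shape regularity: there is a constant $C = C(\Omega)$ such that $\CShape(\ElementsS_\ell) \leq C$ for all $\ell \in \N_0$. Moreover, the refinement is \emph{uniform} in the sense that $\h{\ElementsS_\ell} = 2^{-\ell} \h{\ElementsS_0}$ and $\hMin{\ElementsS_\ell} = 2^{-\ell} \hMin{\ElementsS_0}$, and since $\ElementsS_0$ is a fixed mesh, we have the equivalence $\h{\ElementsS_\ell} \ceq \hMin{\ElementsS_\ell}$ with constants depending only on $\Omega$.

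Given $H > 0$, I would then choose $\ell \in \N_0$ to be the smallest integer such that $2^{-\ell} \h{\ElementsS_0} \leq H$ and set $\ElementsS := \ElementsS_\ell$. By construction, $\h{\ElementsS} \leq H$, and by minimality of $\ell$ one has $H < 2 \cdot 2^{-\ell} \h{\ElementsS_0} = 2 \h{\ElementsS}$, so $\h{\ElementsS} > H/2$. Combining this with the uniformity $\hMin{\ElementsS} \cgeq \h{\ElementsS}$ gives $\hMin{\ElementsS} \geq C(\Omega) H$ for some constant depending only on $\Omega$, which is the asserted two-sided bound. Uniformity in the sense of \cref{Mesh_uniform} follows from $\h{\ElementsS} \leq H \leq C(\Omega)^{-1} \hMin{\ElementsS}$.

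I do not expect any serious obstacle here; the statement is essentially a bookkeeping exercise about uniform refinement. The only mildly delicate point is making sure one invokes a refinement strategy (red refinement in 2D, Freudenthal-type schemes in higher dimensions) for which both the uniform shape-regularity bound and the exact halving of element diameters are guaranteed; this is classical and can simply be cited from the standard FEM literature.
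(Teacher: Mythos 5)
Your proposal is correct and follows essentially the same route as the paper: pick the minimal refinement level $\ell$ with $2^{-\ell}\h{\ElementsS_0} \leq H$, use the exact halving relations $\h{\ElementsS_\ell} = 2^{-\ell}\h{\ElementsS_0}$ and $\hMin{\ElementsS_\ell} = 2^{-\ell}\hMin{\ElementsS_0}$, and exploit minimality to get the matching lower bound. The only difference is cosmetic (you go through $\h{\ElementsS} > H/2$ and then use $\hMin{\ElementsS}\ceq\h{\ElementsS}$, while the paper compares $H$ directly with $\hMin{\ElementsS_\ell}$); both arguments also share the same harmless implicit assumption that the chosen level satisfies $\ell \geq 1$, i.e., $H < \h{\ElementsS_0}$.
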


\begin{proof}
There hold the relations $\h{\ElementsS_\ell} = 2^{-\ell} \h{\ElementsS_0}$ and $\hMin{\ElementsS_\ell} = 2^{-\ell} \hMin{\ElementsS_0}$. For any given $H>0$, we choose the mesh $\ElementsS := \ElementsS_L$, where $L \in \N_0$ is the minimal level satisfying $\h{\ElementsS_L} \leq H$. In particular, there also holds the lower bound $H < \h{\ElementsS_{L-1}} = 2^{-(L-1)} \h{\ElementsS_0} = 2 \h{\ElementsS_0} \hMin{\ElementsS_0}^{-1} \hMin{\ElementsS_L} = C(\Omega) \hMin{\ElementsS_L}$.
\end{proof}

The additional mesh $\ElementsS \subseteq \Pow{\Omega}$ does not need to be aligned with the original mesh $\Elements \subseteq \Pow{\Omega}$ at all. The output of the cut-off operator $\CutoffOp{\ElementsB}{\epsilon}$ is just an element of $\SkpO{1}{p}{\Elements} \subseteq \Hk{1}{\Omega}$, so we need an operator $\fDef{\Pi_{\ElementsS}}{\Hk{1}{\Omega}}{\Skp{0}{q}{\ElementsS}}$ for some $q \geq 0$. Also, in the case $\ElementsS = \Elements$ the operator should act like a projection on functions from $\SkpO{1}{p}{\Elements}$. The simplest solution for these demands is the \emph{piecewise orthogonal projection}.

\begin{definition} \label{App_op_L2_proj}
Let $\ElementsS \subseteq \Pow{\Omega}$ be a mesh, $p \geq 0$ and $\fDef{\hat{\Pi}^p}{\Lp{2}{\refElement}}{\Pp{p}{\refElement}}$ the orthogonal projection on the reference element $\refElement$. The \emph{piecewise orthogonal projection} $\fDef{\Pi_{\ElementsS}^p}{\Lp{2}{\Omega}}{\Skp{0}{p}{\ElementsS}}$ is defined in a piecewise manner: For every $v \in \Lp{2}{\Omega}$ and every $S \in \ElementsS$ we set
\begin{equation*}
\restrict{\Pi_{\ElementsS}^p v}{S} := \hat{\Pi}^p(v \circ F_S) \circ F_S^{-1}.
\end{equation*}
\end{definition}

In fact, $\Pi_{\ElementsS}^p$ coincides with the (global) orthogonal projection from $\Lp{2}{\Omega}$ onto the closed subspace $\Skp{0}{p}{\ElementsS}$. The piecewise approach, however, results in desirable \emph{local} properties and bounds.

\begin{lemma} \label{App_op_L2_proj_props}
The linear operator $\Pi_{\ElementsS}^p$ has a local projection property: For every cluster $\ElementsB \subseteq \ElementsS$ and every function $v \in \Lp{2}{\Omega}$ with $v \in \Skp{0}{p}{\ElementsB}$, there holds $\restrict{\Pi_{\ElementsS}^p v}{\ElementsB} = \restrictN{v}{\ElementsB}$. Furthermore, $\Pi_{\ElementsS}^p$ preserves supports: For every $v \in \Lp{2}{\Omega}$, we have $\supp[\ElementsS]{\Pi_{\ElementsS}^p v} \subseteq \supp[\ElementsS]{v}$. Finally, for every $k \in \set{0,\dots,p+1}$, there hold the stability and error estimates
\begin{equation*}
\begin{array}{lrcl}
\forall v \in \HkPw{k}{\ElementsS}: \forall S \in \ElementsS: \quad & \sum_{\ell=0}^{k} \h{S}^{\ell} \seminorm[\Hk{\ell}{S}]{\Pi_{\ElementsS}^p v} &\cleq& \sum_{\ell=0}^{k} \h{S}^{\ell} \seminorm[\Hk{\ell}{S}]{v}, \\
 & \sum_{\ell=0}^{k} \h{S}^{\ell} \seminorm[\Hk{\ell}{S}]{(\identity - \Pi_{\ElementsS}^p)(v)} &\cleq& \h{S}^k \seminorm[\Hk{k}{S}]{v}.
\end{array}
\end{equation*}
\end{lemma}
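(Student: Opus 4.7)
The local projection property and the support preservation are immediate from the elementwise definition $\restrict{\Pi_{\ElementsS}^p v}{S} := \hat{\Pi}^p(v \circ F_S) \circ F_S^{-1}$: if $\restrictN{v}{S} \in \Pp{p}{S}$, then $v \circ F_S \in \Pp{p}{\refElement}$ is fixed by the reference projection $\hat{\Pi}^p$, while if $\restrictN{v}{S} \equiv 0$, then $v \circ F_S \equiv 0$ and $\hat{\Pi}^p$ returns zero. Both claims thus follow element by element.

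For the stability and error estimates I would use the classical Bramble--Hilbert plus affine scaling argument. Since $F_S$ is affine with $\norm[2]{\gradN{F_S}} \ceq \h{S}$ and $\abs{\detN{\gradN{F_S}}} \ceq \h{S}^d$, one has $\seminorm[\Hk{\ell}{S}]{w} \ceq \h{S}^{d/2-\ell}\seminorm[\Hk{\ell}{\refElement}]{w \circ F_S}$ for every $\ell \geq 0$, with constants depending only on shape regularity. Consequently it suffices to establish the scale-free counterparts on $\refElement$,
\begin{equation*}
\sum_{\ell=0}^{k} \seminorm[\Hk{\ell}{\refElement}]{\hat{\Pi}^p \hat{v}} \cleq \sum_{\ell=0}^{k} \seminorm[\Hk{\ell}{\refElement}]{\hat{v}}, \qquad \sum_{\ell=0}^{k} \seminorm[\Hk{\ell}{\refElement}]{(\identity - \hat{\Pi}^p) \hat{v}} \cleq \seminorm[\Hk{k}{\refElement}]{\hat{v}},
\end{equation*}
and then transport them back to $S$, which reintroduces exactly the weights $\h{S}^\ell$.

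The reference error bound is obtained by invoking the Bramble--Hilbert lemma: for every $\hat{v} \in \Hk{k}{\refElement}$ with $k \leq p+1$, there exists $\hat{q} \in \Pp{k-1}{\refElement} \subseteq \Pp{p}{\refElement}$ with $\norm[\Hk{k}{\refElement}]{\hat{v} - \hat{q}} \cleq \seminorm[\Hk{k}{\refElement}]{\hat{v}}$. Since $\hat{\Pi}^p$ fixes $\hat{q}$ and is $\Lp{2}{\refElement}$-bounded, one obtains $\norm[\Lp{2}{\refElement}]{(\identity - \hat{\Pi}^p)\hat{v}} = \norm[\Lp{2}{\refElement}]{(\identity - \hat{\Pi}^p)(\hat{v} - \hat{q})} \cleq \seminorm[\Hk{k}{\refElement}]{\hat{v}}$. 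For $1 \leq \ell \leq p$ I would use a triangle inequality with $\hat{q}$ together with the norm equivalence on the finite-dimensional space $\Pp{p}{\refElement}$ applied to $\hat{q} - \hat{\Pi}^p \hat{v}$ to bound its $\Hk{\ell}{\refElement}$-seminorm by its $\Lp{2}{\refElement}$-norm, which is itself controlled via the already-established $\Lp{2}{}$-error; the boundary case $\ell = p+1$ is trivial because $\hat{\Pi}^p \hat{v}$ has vanishing $(p+1)$-th derivatives, so $\seminorm[\Hk{p+1}{\refElement}]{(\identity - \hat{\Pi}^p)\hat{v}} = \seminorm[\Hk{p+1}{\refElement}]{\hat{v}}$. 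The stability estimate then drops out from the triangle inequality $\seminorm[\Hk{\ell}{\refElement}]{\hat{\Pi}^p \hat{v}} \leq \seminorm[\Hk{\ell}{\refElement}]{\hat{v}} + \seminorm[\Hk{\ell}{\refElement}]{(\identity - \hat{\Pi}^p)\hat{v}}$ combined with the error bound already obtained.

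No substantive obstacle is expected: the argument is entirely classical. The only point that deserves care is the extremal index $\ell = p+1$ in the error estimate, where the projected function contributes nothing and the bound is trivial, together with the bookkeeping of scaling factors $\h{S}^{d/2-\ell}$ when pulling back and forth between $S$ and $\refElement$.
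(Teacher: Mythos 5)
The paper gives no proof of \cref{App_op_L2_proj_props}; it is stated as a standard fact. Your argument is a correct and entirely standard filling of that gap: the local projection and support preservation are indeed immediate from the elementwise definition, and the stability/error estimates follow by the usual route of affine scaling $\seminorm[\Hk{\ell}{S}]{w} \ceq \h{S}^{d/2-\ell}\seminorm[\Hk{\ell}{\refElement}]{w\circ F_S}$ combined with Bramble--Hilbert on $\refElement$, $\Lp{2}{}$-contractivity of $\hat\Pi^p$, finite-dimensional norm equivalence on $\Pp{p}{\refElement}$, and the trivial vanishing of $(p+1)$-th derivatives of $\hat\Pi^p\hat v$. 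The only minor bookkeeping points — the edge cases $\ell=0$ (covered by $\Lp{2}{}$-contractivity) and $\ell=p+1$ (trivial), and the degenerate $k=0$ case where the error bound reduces to contractivity rather than Bramble--Hilbert with $\Pp{-1}{\refElement}$ — are either handled explicitly or vacuous, so there is no gap.
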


Now, we have all the ingredients for the construction of the \emph{single-step coarsening operator}.

\begin{theorem} \label{Coarsening_op_single}
Let $\Elements \subseteq \Pow{\Omega}$ be a mesh of locally bounded cardinality. Furthermore, let $\ElementsB \subseteq \Elements$ and $\delta>0$ with $\delta \cleq 1$. Then there exists a linear \emph{single-step coarsening operator}
\begin{equation*}
\fDef{\CoarseningOp{\ElementsB}{\delta}}{\SkpOHarm{1}{p}{\Elements}{\inflateN{\ElementsB}{\delta}}}{\SkpOHarm{1}{p}{\Elements}{\ElementsB}}
\end{equation*}
of rank
\begin{equation*}
\rank{\CoarseningOp{\ElementsB}{\delta}} \cleq \( 1 + \frac{\diam[\Elements]{\ElementsB}}{\delta} \)^{d \CCard}
\end{equation*}
that satisfies the following approximation property: For every $u \in \SkpOHarm{1}{p}{\Elements}{\inflateN{\ElementsB}{\delta}}$,
\begin{equation*}
\norm[\Lp{2}{\ElementsB}]{u - \CoarseningOp{\ElementsB}{\delta} u} \leq \frac{1}{2} \norm[\Lp{2}{\inflateN{\ElementsB}{\delta}}]{u}.
\end{equation*}
\end{theorem}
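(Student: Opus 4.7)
The plan is to define $\CoarseningOp{\ElementsB}{\delta} := P_{\ElementsB} \circ \Pi_{\ElementsS}^p \circ \CutoffOp{\ElementsB}{\epsilon}$, where $\CutoffOp{\ElementsB}{\epsilon}$ (\cref{Cut_off_op_disc}) is the discrete cut-off at an intermediate scale $\epsilon \ceq \delta$, $\Pi_{\ElementsS}^p$ (\cref{App_op_L2_proj}) is the piecewise $L^2$-projection onto $\Skp{0}{p}{\ElementsS}$ for an auxiliary uniform mesh $\ElementsS$ produced by \cref{Uniform_mesh_at_h} with $\h{\ElementsS} \ceq c\delta$, and $P_{\ElementsB}$ denotes the $L^2(\Omega)$-orthogonal projection onto $\SkpOHarm{1}{p}{\Elements}{\ElementsB}$. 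The absolute constants $c$ and $\epsilon/\delta$ will be fixed at the end, small enough that the hypotheses of \cref{Cut_off_fct_disc} and of the discrete Caccioppoli inequality (\cref{Space_SkpOHarm_Cacc}) are satisfied and so that the resulting prefactor falls below $\tfrac12$. This requires the regime $\h{\ElementsB} \cleq c'\delta$; in the complementary degenerate regime $4\CShape^3\h{\ElementsB} > \delta$, the inflated cluster satisfies $\cardN{\inflateN{\ElementsB}{\delta}} \cleq (1+\diam[\Elements]{\ElementsB}/\delta)^{d\CCard}$ directly by \cref{Mesh_loc_bd_card} combined with $\h{\inflateN{\ElementsB}{\delta}} \cgeq \delta$, and one may take $\CoarseningOp{\ElementsB}{\delta}$ to be $P_{\ElementsB}$ composed with a restriction onto the basis functions associated to $\inflateN{\ElementsB}{\delta}$, whose rank is controlled by that cardinality.

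For the error bound, the nesting property (\cref{Space_SkpOHarm_props}) ensures $u \in \SkpOHarm{1}{p}{\Elements}{\ElementsB}$, hence $P_{\ElementsB}u = u$, and the local projection property of \cref{Cut_off_op_disc_props} yields $\CutoffOp{\ElementsB}{\epsilon}u = u$ on $\ElementsB$. Writing $w := \CutoffOp{\ElementsB}{\epsilon}u$, one obtains
\begin{equation*}
\norm[\Lp{2}{\ElementsB}]{u-\CoarseningOp{\ElementsB}{\delta}u} = \norm[\Lp{2}{\ElementsB}]{P_{\ElementsB}(w-\Pi_{\ElementsS}^p w)} \leq \norm[\Lp{2}{\Omega}]{w-\Pi_{\ElementsS}^p w}.
\end{equation*}
The elementwise error estimate of \cref{App_op_L2_proj_props} bounds the right-hand side by $\cleq \h{\ElementsS}\seminorm[\Hk{1}{\Omega}]{w}$. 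The local stability of $\CutoffOp{\ElementsB}{\epsilon}$ (\cref{Cut_off_op_disc_props}) combined with $\supp[\Elements]{w} \subseteq \inflateN{\ElementsB}{\epsilon}$ gives $\seminorm[\Hk{1}{\Omega}]{w} \cleq \seminorm[\Hk{1}{\inflateN{\ElementsB}{\epsilon}}]{u} + \epsilon^{-1}\norm[\Lp{2}{\inflateN{\ElementsB}{\epsilon}}]{u}$. Finally, the discrete Caccioppoli inequality (\cref{Space_SkpOHarm_Cacc}), applied to $u$ on the cluster $\inflateN{\ElementsB}{\epsilon}$ with inflation $\delta-\epsilon$ (its hypotheses follow from the nesting of $\SkpOHarm{}{}$-spaces and the mesh-metric inclusions in \cref{Mesh_Metric_props}), bounds $\seminorm[\Hk{1}{\inflateN{\ElementsB}{\epsilon}}]{u}$ by $(\delta-\epsilon)^{-1}\norm[\Lp{2}{\inflateN{\ElementsB}{\delta}}]{u}$. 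Chaining these estimates and substituting $\h{\ElementsS} \ceq c\delta$ produces $\norm[\Lp{2}{\ElementsB}]{u-\CoarseningOp{\ElementsB}{\delta}u} \cleq c\,\norm[\Lp{2}{\inflateN{\ElementsB}{\delta}}]{u}$, and $c$ is then fixed small enough to reach the claimed $\tfrac12$.

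For the rank bound, the support preservation of $\Pi_{\ElementsS}^p$ (\cref{App_op_L2_proj_props}) and $\supp[\Elements]{w} \subseteq \inflateN{\ElementsB}{\epsilon}$ show that the image of $\Pi_{\ElementsS}^p \circ \CutoffOp{\ElementsB}{\epsilon}$ is contained in $\Skp{0}{p}{\Patch{\ElementsS}{\bigcup\inflateN{\ElementsB}{\epsilon}}}$, whose dimension is $\cleq \cardN{\Patch{\ElementsS}{\bigcup\inflateN{\ElementsB}{\epsilon}}}$. Since $\ElementsS$ is uniform (\cref{Mesh_uniform_card}) with $\h{\ElementsS} \ceq \delta$, and since \cref{Mesh_Metric_props} bounds its mesh-metric diameter by $\diam[\Elements]{\ElementsB} + \Landau{\delta}$, one arrives at $\rank{\CoarseningOp{\ElementsB}{\delta}} \cleq (1+\diam[\Elements]{\ElementsB}/\delta)^d$, which is dominated by the claimed $(1+\diam[\Elements]{\ElementsB}/\delta)^{d\CCard}$ because $\CCard \geq 1$.

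The principal obstacle is consistency of constants: the cut-off operator, the discrete Caccioppoli inequality, the construction of the auxiliary uniform mesh, and the diameter translation between $\Elements$ and $\ElementsS$ each impose quantitative relations among $\h{\ElementsB}$, $\epsilon$, $\delta-\epsilon$, and $\h{\ElementsS}$. Keeping $\epsilon$ a fixed fraction of $\delta$ and $\h{\ElementsS}$ a sufficiently small multiple of $\delta$ makes every intermediate ratio $\Landau{1}$, so that the estimates compose. The only residual issue is the degenerate regime where the cut-off at scale $\delta$ is not available and one must instead fall back on a direct rank count via \cref{Mesh_loc_bd_card}.
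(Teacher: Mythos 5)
Your main-case argument is essentially identical to the paper's: take $\CoarseningOp{\ElementsB}{\delta} := P_{\ElementsB}\Pi_{\ElementsS}^p\CutoffOp{\ElementsB}{\epsilon}$ with $\epsilon \ceq \delta$, use \cref{App_op_L2_proj_props} to produce the factor $\h{\ElementsS}$, the cut-off stability from \cref{Cut_off_op_disc_props}, and then absorb the $H^1$-seminorm via the discrete Caccioppoli inequality (\cref{Space_SkpOHarm_Cacc}); your observation that the rank there is actually only $\Landau{(1+\diam[\Elements]{\ElementsB}/\delta)^d}$ because $\ElementsS$ is a \emph{uniform} auxiliary mesh is correct and matches the paper.

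The gap is in the degenerate regime $4\CShape^3\h{\ElementsB} > \delta$. You address only the rank bound there and say nothing about why the error satisfies $\norm[\Lp{2}{\ElementsB}]{u - \CoarseningOp{\ElementsB}{\delta}u} \leq \tfrac12\norm[\Lp{2}{\inflateN{\ElementsB}{\delta}}]{u}$; moreover, the operator you propose --- ``$P_{\ElementsB}$ composed with a restriction onto the basis functions associated to $\inflateN{\ElementsB}{\delta}$'' --- is ill-defined in the paper's generality because the $\phi_n$ are permitted to have \emph{global} supports (cf.\ the remark after \cref{Basis_fcts}), so there is no canonical set of basis functions ``associated to'' a cluster. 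Even reading ``restriction'' as multiplication by the indicator of $\bigcup\inflateN{\ElementsB}{\delta}$ fails: since $P_{\ElementsB}u = u$, the error on $\ElementsB$ becomes $P_{\ElementsB}\bigl((1-\chi)u\bigr)$ and is controlled by $\norm[\Lp{2}{\Omega\setminus\inflateN{\ElementsB}{\delta}}]{u}$, not by $\norm[\Lp{2}{\inflateN{\ElementsB}{\delta}}]{u}$. The paper closes this case with a small but essential trick you did not find: it keeps the very same operator form but sets $\ElementsS := \Elements$ and $\epsilon := 4\CShape^3\h{\ElementsB}$, so that $\Pi_{\Elements}^p$ is the identity on $\Skp{1}{p}{\Elements}$, the invariance $\CutoffOp{\ElementsB}{\epsilon}u \in \SkpOHarm{1}{p}{\Elements}{\ElementsB}$ makes $P_{\ElementsB}$ act as identity too, and the error is therefore \emph{exactly zero}; the rank is then controlled by the locally bounded cardinality of $\Elements$ itself (this is precisely where the exponent degrades from $d$ to $d\CCard$). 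You should replace your degenerate-case construction by this one.
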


\begin{proof}
Let $\ElementsB \subseteq \Elements$ and $\delta>0$ with $\delta \cleq 1$. For the construction of $\CoarseningOp{\ElementsB}{\delta}$ we need three operators: First, we use the discrete cut-off operator $\fDef{\CutoffOp{\ElementsB}{\epsilon}}{\Skp{1}{p}{\Elements}}{\Skp{1}{p}{\Elements}}$ from \cref{Cut_off_op_disc} with some carefully chosen parameter $\epsilon>0$. Second, we apply the piecewise orthogonal projection $\fDef{\Pi_{\ElementsS}^p}{\Lp{2}{\Omega}}{\Skp{0}{p}{\ElementsS}}$ from \cref{App_op_L2_proj} on some suitable mesh $\ElementsS \subseteq \Pow{\Omega}$. Third, the result is mapped back into the space $\SkpOHarm{1}{p}{\Elements}{\ElementsB}$ via the orthogonal projection $\fDef{P_{\ElementsB}}{\Lp{2}{\Omega}}{\SkpOHarm{1}{p}{\Elements}{\ElementsB}}$.

For the precise choice of $\epsilon$ and $\ElementsS$ we have to distinguish between two cases: In the more involved case $\delta \geq 20\CShape^7 \h{\ElementsB}$ we choose $\epsilon := \delta/(5\CShape^4) \geq 4\CShape^3 \h{\ElementsB}$ and use the uniform mesh $\ElementsS \subseteq \Pow{\Omega}$ from \cref{Uniform_mesh_at_h} with $\h{\ElementsS} \ceq \hMin{\ElementsS} \ceq H$, where the parameter $H>0$ will be specified during the proof. In the degenerate case $\delta < 20\CShape^7 \h{\ElementsB}$ we set $\epsilon := 4\CShape^3 \h{\ElementsB}$ and use the mesh $\ElementsS := \Elements$ itself.

We define the asserted operator as
\begin{equation*}
\fDef{\CoarseningOp{\ElementsB}{\delta} := P_{\ElementsB} \Pi_{\ElementsS}^p \CutoffOp{\ElementsB}{\epsilon}}{\SkpOHarm{1}{p}{\Elements}{\inflateN{\ElementsB}{\delta}}}{\SkpOHarm{1}{p}{\Elements}{\ElementsB}}.
\end{equation*}

\emph{The case $\delta \geq 20\CShape^7 \h{\ElementsB}$:} Let $u \in \SkpOHarm{1}{p}{\Elements}{\inflateN{\ElementsB}{\delta}}$. From \cref{Mesh_Metric_props} we know that the parameter $\alpha := 4\CShape^4 \epsilon$ satisfies $4\CShape^3 \h{\inflateN{\ElementsB}{\epsilon}} \leq \alpha \cleq 1$. In particular, we can apply the discrete Caccioppoli inequality to the set $\inflateN{\ElementsB}{\epsilon}$ and the parameter $\alpha$. Since $\delta \eqsim \alpha$, this gives the stability estimate for the cut-off operator $\CutoffOp{\ElementsB}{\epsilon}$
\begin{equation*}
\sum_{\ell=0}^{1} \delta^\ell \seminorm[\Hk{\ell}{\Omega}]{\CutoffOp{\ElementsB}{\epsilon} u} \stackrel{\cref{Cut_off_op_disc_props}}{\cleq} \sum_{\ell=0}^{1} \alpha^\ell \seminorm[\Hk{\ell}{\inflateN{\ElementsB}{\epsilon}}]{u} \stackrel{\cref{Space_SkpOHarm_Cacc}}{\cleq} \norm[\Lp{2}{\inflateN{\ElementsB}{\epsilon+\alpha}}]{u} \stackrel{\epsilon+\alpha \leq \delta}{\leq} \norm[\Lp{2}{\inflateN{\ElementsB}{\delta}}]{u}.
\end{equation*}

From \cref{Space_SkpOHarm_props} and \cref{Cut_off_op_disc_props} we know that $\CutoffOp{\ElementsB}{\epsilon} u \in \SkpOHarm{1}{p}{\Elements}{\ElementsB}$, hence $P_{\ElementsB} \CutoffOp{\ElementsB}{\epsilon} u = \CutoffOp{\ElementsB}{\epsilon} u$. We conclude $\restrictN{u}{\ElementsB} = \restrict{\CutoffOp{\ElementsB}{\epsilon} u}{\ElementsB} = \restrict{P_{\ElementsB} \CutoffOp{\ElementsB}{\epsilon} u}{\ElementsB}$ and thus
\begin{equation*}
\begin{array}{rclcl}
\norm[\Lp{2}{\ElementsB}]{u - \CoarseningOp{\ElementsB}{\delta} u} &=& \norm[\Lp{2}{\ElementsB}]{P_{\ElementsB} \CutoffOp{\ElementsB}{\epsilon} u - P_{\ElementsB} \Pi_{\ElementsS}^p \CutoffOp{\ElementsB}{\epsilon} u} &\leq& \norm[\Lp{2}{\Omega}]{P_{\ElementsB} (\identity - \Pi_{\ElementsS}^p)(\CutoffOp{\ElementsB}{\epsilon} u)} \\
&\leq& \norm[\Lp{2}{\Omega}]{(\identity - \Pi_{\ElementsS}^p)(\CutoffOp{\ElementsB}{\epsilon} u)} &\stackrel{\cref{App_op_L2_proj_props}}{\cleq}& H \seminorm[\Hk{1}{\Omega}]{\CutoffOp{\ElementsB}{\epsilon} u} \\
&\cleq& \frac{H}{\delta} \norm[\Lp{2}{\inflateN{\ElementsB}{\delta}}]{u}.
\end{array}
\end{equation*}

In particular, we can choose $H \ceq \delta > 0$ small enough to establish the asserted error bound.

\emph{The case $\delta < 20\CShape^7 \h{\ElementsB}$:} Again let $u \in \SkpOHarm{1}{p}{\Elements}{\inflateN{\ElementsB}{\delta}}$. Exploiting $\ElementsS = \Elements$ and \cref{Cut_off_op_disc_props}, the operator $\CoarseningOp{\ElementsB}{\delta}$ reduces to $\CoarseningOp{\ElementsB}{\delta} u = P_{\ElementsB} \Pi_{\Elements}^p \CutoffOp{\ElementsB}{\epsilon} u = P_{\ElementsB} \CutoffOp{\ElementsB}{\epsilon} u = \CutoffOp{\ElementsB}{\epsilon} u$. Consequently, the error bound becomes trivial:
\begin{equation*}
\norm[\Lp{2}{\ElementsB}]{u - \CoarseningOp{\ElementsB}{\delta} u} = \norm[\Lp{2}{\ElementsB}]{u - \CutoffOp{\ElementsB}{\epsilon} u} = \norm[\Lp{2}{\ElementsB}]{u - u} = 0.
\end{equation*}

To find a good upper bound for the rank of $\CoarseningOp{\ElementsB}{\delta}$, the locally bounded cardinality of $\ElementsS$ is crucial. In the case $\delta \geq 20\CShape^7 \h{\ElementsB}$ the mesh $\ElementsS$ is uniform and thus of locally bounded cardinality (cf. \cref{Mesh_uniform_card}). In the case $\delta < 20\CShape^7 \h{\ElementsB}$ we chose $\ElementsS = \Elements$, which has locally bounded cardinality by assumption.

Next, we abbreviate $B := \bigcup\inflateN{\ElementsB}{\epsilon} \subseteq \R^d$ and compute a common lower bound for $\h{\Patch{\ElementsS}{B}}$: In the case $\delta \geq 20\CShape^7 \h{\ElementsB}$ we have $\h{\ElementsB} + \epsilon + \delta \cleq \delta \ceq H \ceq \hMin{\ElementsS} \leq \h{\Patch{\ElementsS}{B}}$ and in the case $\delta < 20\CShape^7 \h{\ElementsB}$ we get $\h{\ElementsB} + \epsilon + \delta \cleq \h{\ElementsB} \leq \h{\Patch{\Elements}{\inflateN{\ElementsB}{\epsilon}}} = \h{\Patch{\ElementsS}{B}}$ as well.

Now, for every $u \in \SkpOHarm{1}{p}{\Elements}{\inflateN{\ElementsB}{\delta}}$ we know from \cref{App_op_L2_proj_props} and \cref{Cut_off_op_disc_props} that $\supp[\ElementsS]{\Pi_{\ElementsS}^p \CutoffOp{\ElementsB}{\epsilon} u} \subseteq \supp[\ElementsS]{\CutoffOp{\ElementsB}{\epsilon} u} \subseteq \Patch{\ElementsS}{B}$. This results in the estimate
\begin{equation*}
\begin{array}{rclcl}
\rank{\CoarseningOp{\ElementsB}{\delta}} &\leq& \dimN{\Set{v \in \Skp{0}{p}{\ElementsS}}{\supp[\ElementsS]{v} \subseteq \Patch{\ElementsS}{B}}} &\ceq& \cardN{\Patch{\ElementsS}{B}} \\
&\stackrel{\cref{Mesh_loc_bd_card}}{\cleq}& ( 1 + \h{\Patch{\ElementsS}{B}}^{-1} \diam[\ElementsS]{\Patch{\ElementsS}{B}} )^{d\CCard} &\stackrel{\cref{Mesh_Metric_props}}{\cleq}& ( 1 + \h{\Patch{\ElementsS}{B}}^{-1}(\diam[\Elements]{\ElementsB} + \h{\ElementsB} + \epsilon) )^{d\CCard} \\
&\stackrel{\h{\ElementsB} + \epsilon + \delta \cleq \h{\Patch{\ElementsS}{B}}}{\cleq}& ( 1 + \delta^{-1} \diam[\Elements]{\ElementsB} )^{d\CCard},
\end{array}
\end{equation*}
which finishes the proof.
\end{proof}

With the single-step coarsening operator at hand, we can iterate to obtain exponential convergence.

\begin{theorem} \label{Coarsening_op_multi}
Let $\Elements \subseteq \Pow{\Omega}$ be a mesh of locally bounded cardinality. Furthermore, let $\ElementsB \subseteq \Elements$ and $\delta>0$ with $\delta \cleq 1$. Then, for every $L \in \N$, there exists a linear \emph{multi-step coarsening operator}
\begin{equation*}
\fDef{\CoarseningOp{\ElementsB}{\delta,L}}{\SkpOHarm{1}{p}{\Elements}{\inflateN{\ElementsB}{\delta L}}}{\SkpOHarm{1}{p}{\Elements}{\ElementsB}}
\end{equation*}
of rank
\begin{equation*}
\rank{\CoarseningOp{\ElementsB}{\delta,L}} \cleq \( L + \frac{\diam[\Elements]{\ElementsB}}{\delta} \)^{d\CCard+1}
\end{equation*}
that satisfies the following approximation property: For every $u \in \SkpOHarm{1}{p}{\Elements}{\inflateN{\ElementsB}{\delta L}}$, there holds
\begin{equation*}
\norm[\Lp{2}{\ElementsB}]{u - \CoarseningOp{\ElementsB}{\delta,L} u} \leq 2^{-L} \norm[\Lp{2}{\inflateN{\ElementsB}{\delta L}}]{u}.
\end{equation*}
\end{theorem}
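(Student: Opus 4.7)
The natural approach is to iterate the single-step coarsening operator from \cref{Coarsening_op_single} on a chain of nested inflated clusters. I set $\ElementsB_\ell := \inflateN{\ElementsB}{\delta \ell}$ for $\ell = 0, 1, \dots, L$, so that $\ElementsB_0 = \ElementsB$ and $\ElementsB_L = \inflateN{\ElementsB}{\delta L}$, and $\inflate{\ElementsB_\ell}{\delta} \subseteq \ElementsB_{\ell+1}$ by the monotonicity of inflation in \cref{Mesh_Metric_props}.

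Given $u \in \SkpOHarm{1}{p}{\Elements}{\ElementsB_L}$, I define recursively $u^{(0)} := u$ and $u^{(\ell+1)} := u^{(\ell)} - \CoarseningOp{\ElementsB_{L-\ell-1}}{\delta} u^{(\ell)}$ for $\ell = 0, \dots, L-1$. A short induction using the nesting of the harmonic spaces from \cref{Space_SkpOHarm_props} together with the fact that $\CoarseningOp{\ElementsA}{\delta}$ lands in $\SkpOHarm{1}{p}{\Elements}{\ElementsA}$ shows that $u^{(\ell)} \in \SkpOHarm{1}{p}{\Elements}{\ElementsB_{L-\ell}}$, which is precisely the input space needed to apply $\CoarseningOp{\ElementsB_{L-\ell-1}}{\delta}$ at the next step (since $\inflate{\ElementsB_{L-\ell-1}}{\delta} \subseteq \ElementsB_{L-\ell}$). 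I then set
\begin{equation*}
\CoarseningOp{\ElementsB}{\delta,L} u := u - u^{(L)} = \sum_{\ell=0}^{L-1} \CoarseningOp{\ElementsB_{L-\ell-1}}{\delta} u^{(\ell)}.
\end{equation*}
Each summand lies in $\SkpOHarm{1}{p}{\Elements}{\ElementsB_{L-\ell-1}} \subseteq \SkpOHarm{1}{p}{\Elements}{\ElementsB}$ by the same nesting, so $\CoarseningOp{\ElementsB}{\delta,L} u \in \SkpOHarm{1}{p}{\Elements}{\ElementsB}$ as required.

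The error estimate then falls out by telescoping. At each step, \cref{Coarsening_op_single} applied to the cluster $\ElementsB_{L-\ell-1}$ yields
\begin{equation*}
\norm[\Lp{2}{\ElementsB_{L-\ell-1}}]{u^{(\ell+1)}} = \norm[\Lp{2}{\ElementsB_{L-\ell-1}}]{u^{(\ell)} - \CoarseningOp{\ElementsB_{L-\ell-1}}{\delta} u^{(\ell)}} \leq \tfrac{1}{2} \norm[\Lp{2}{\ElementsB_{L-\ell}}]{u^{(\ell)}},
\end{equation*}
and iterating this bound $L$ times gives $\norm[\Lp{2}{\ElementsB}]{u - \CoarseningOp{\ElementsB}{\delta,L} u} = \norm[\Lp{2}{\ElementsB_0}]{u^{(L)}} \leq 2^{-L} \norm[\Lp{2}{\ElementsB_L}]{u}$.

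For the rank, I use that the rank of a sum is bounded by the sum of ranks, so $\rank{\CoarseningOp{\ElementsB}{\delta,L}} \leq \sum_{\ell=0}^{L-1} \rank{\CoarseningOp{\ElementsB_{L-\ell-1}}{\delta}}$. Combining the single-step rank bound with $\diam[\Elements]{\ElementsB_k} \leq \diam[\Elements]{\ElementsB} + 2\delta k$ from \cref{Mesh_Metric_props} bounds each summand by $\cleq (1 + \diam[\Elements]{\ElementsB}/\delta + 2L)^{d\CCard}$, and summing $L$ such terms yields $\cleq L \* (L + \diam[\Elements]{\ElementsB}/\delta)^{d\CCard} \cleq (L + \diam[\Elements]{\ElementsB}/\delta)^{d\CCard + 1}$ as claimed. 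No step here is genuinely hard given \cref{Coarsening_op_single}; the main point is careful bookkeeping of the nested clusters, with the nesting of harmonic spaces ensuring that every iterate $u^{(\ell)}$ still belongs to the domain of the next single-step operator.
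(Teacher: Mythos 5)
Your proof is correct and follows the same iterative strategy as the paper: peel off approximation error one $\delta$-shell at a time using the single-step operator from \cref{Coarsening_op_single}, tracking that each iterate remains locally discrete harmonic on the relevant cluster via \cref{Space_SkpOHarm_props}. The only (cosmetic) differences are that the paper defines the nested clusters by iterated inflation $\ElementsB_{\ell+1} := \inflate{\ElementsB_\ell}{\delta}$ rather than your $\ElementsB_\ell := \inflateN{\ElementsB}{\delta\ell}$ (these are nested the same way and both work), and the paper bounds the rank by expanding the product $\identity - \prod_\ell(\identity - \CoarseningOp{\ell}{})$ over all $\pi \in \set{0,1}^L\backslash\set{0}$ and implicitly grouping terms, whereas your telescoping identity $\CoarseningOp{\ElementsB}{\delta,L} u = \sum_{\ell=0}^{L-1} \CoarseningOp{\ElementsB_{L-\ell-1}}{\delta} u^{(\ell)}$ gives the same rank bound $\sum_\ell \rank{\CoarseningOp{\ell}{}}$ a bit more transparently.
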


\begin{proof}
Let $\ElementsB \subseteq \Elements$ and $\delta>0$ with $\delta \cleq 1$ as well as $L \in \N$. We define a sequence of nested element sets $\ElementsB \subseteq \ElementsB_0 \subseteq \dots \subseteq \ElementsB_L \subseteq \inflateN{\ElementsB}{\delta L}$ inductively by $\ElementsB_0 := \ElementsB$ and $\ElementsB_{\ell+1} := \inflate{\ElementsB_{\ell}}{\delta}$. Using the corresponding single-step coarsening operators $\fDef{\CoarseningOp{\ell}{} := \CoarseningOp{\ElementsB_{\ell}}{\delta}}{\SkpOHarm{1}{p}{\Elements}{\ElementsB_{\ell+1}}}{\SkpOHarm{1}{p}{\Elements}{\ElementsB_{\ell}}}$ from \cref{Coarsening_op_single}, we make the following definition:
\begin{equation*}
\forall u \in \SkpOHarm{1}{p}{\Elements}{\inflateN{\ElementsB}{\delta L}}: \quad \quad \CoarseningOp{\ElementsB}{\delta,L} u := u - (\identity - \CoarseningOp{0}{}) \circ \dots \circ (\identity - \CoarseningOp{L-1}{})(u) \in \SkpOHarm{1}{p}{\Elements}{\ElementsB}.
\end{equation*}

Using the alternative representation $\CoarseningOp{\ElementsB}{\delta,L} u = -\sum_{\pi \in \set{0,1}^L\backslash\set{0}} (-\CoarseningOp{0}{})^{(\pi_0)} \circ \dots \circ (-\CoarseningOp{L-1}{})^{(\pi_{L-1})} (u)$, we infer
\begin{equation*}
\begin{array}{rclcl}
\rank{\CoarseningOp{\ElementsB}{\delta,L}} &\leq& \sum_{\ell=0}^{L-1} \rank{\CoarseningOp{\ell}{}} &\stackrel{\cref{Coarsening_op_single}}{\cleq}& \sum_{\ell=0}^{L-1} ( 1 + \delta^{-1} \diam[\Elements]{\ElementsB_{\ell}} )^{d \CCard} \\
&\stackrel{\cref{Mesh_Metric_props}}{\cleq}& \sum_{\ell=0}^{L-1} ( 1 + \delta^{-1} \diam[\Elements]{\ElementsB} + \ell )^{d \CCard} &\leq& ( L + \delta^{-1} \diam[\Elements]{\ElementsB} )^{d\CCard+1}.
\end{array}
\end{equation*}

Finally, the definition of $\CoarseningOp{\ElementsB}{\delta,L}$ was such that the error bound becomes elementary: For every $u \in \SkpOHarm{1}{p}{\Elements}{\inflateN{\ElementsB}{\delta L}}$, iteration of \cref{Coarsening_op_single} gives
\begin{equation*}
\norm[\Lp{2}{\ElementsB}]{u - \CoarseningOp{\ElementsB}{\delta,L} u} = \norm[\Lp{2}{\ElementsB_0}]{(\identity - \CoarseningOp{0}{}) \circ \dots \circ (\identity - \CoarseningOp{L-1}{})(u)} \leq 2^{-L} \norm[\Lp{2}{\inflateN{\ElementsB}{\delta L}}]{u}.
\end{equation*}
\end{proof}

\subsection{Putting everything together} \label{SSec_Put_together}

We can finally answer the question of how to find the subspace $V_{\ElementsB,\ElementsD,L} \subseteq \Lp{2}{\Omega}$ from \cref{SSec_Proof_overview}. After that, the proof of \cref{System_matrix_HMatrix_approx} is just a matter of putting everything together.

\begin{theorem} \label{Space_VBDL}
Let $\Elements \subseteq \Pow{\Omega}$ be a mesh of locally bounded cardinality and $\ElementsB,\ElementsD \subseteq \Elements$ clusters satisfying
\begin{equation*}
0 < \diam[\Elements]{\ElementsB} \leq \CAdm \dist[\Elements]{\ElementsB}{\ElementsD}.
\end{equation*}

Then, for every $L \in \N$, there exists a subspace
\begin{equation*}
V_{\ElementsB,\ElementsD,L} \subseteq \SkpO{1}{p}{\Elements}
\end{equation*}
of dimension
\begin{equation*}
\dim{V_{\ElementsB,\ElementsD,L}} \cleq L^{d\CCard+1}
\end{equation*}
that satisfies the following approximation property: For every $f \in \Lp{2}{\Omega}$ with $\supp[\Elements]{f} \subseteq \ElementsD$, 
\begin{equation*}
\inf_{v \in V_{\ElementsB,\ElementsD,L}} \norm[\Lp{2}{\ElementsB}]{\SolutionOp{\Elements} f - v} \cleq 2^{-L} \norm[\Lp{2}{\ElementsD}]{f}.
\end{equation*}
\end{theorem}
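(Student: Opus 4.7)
The idea is to reduce Theorem \ref{Space_VBDL} to a single invocation of the multi-step coarsening operator from Theorem \ref{Coarsening_op_multi}. The admissibility $\diam[\Elements]{\ElementsB} \leq \CAdm \dist[\Elements]{\ElementsB}{\ElementsD}$ together with $\diam[\Elements]{\ElementsB} > 0$ forces $\dist[\Elements]{\ElementsB}{\ElementsD} > 0$, so I set $\delta := \dist[\Elements]{\ElementsB}{\ElementsD}/(2L)$. Since $\dist[\Elements]{\ElementsB}{\ElementsD}$ is bounded by the fixed constant $\diam{\Omega}$, this $\delta$ automatically satisfies $\delta \cleq 1$ (the standing hypothesis of Theorem \ref{Coarsening_op_multi}), and $\delta L < \dist[\Elements]{\ElementsB}{\ElementsD}$ ensures that $\inflateN{\ElementsB}{\delta L} \cap \ElementsD = \emptyset$. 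With this choice, I define
\begin{equation*}
V_{\ElementsB,\ElementsD,L} := \ran{\CoarseningOp{\ElementsB}{\delta,L}} \subseteq \SkpOHarm{1}{p}{\Elements}{\ElementsB} \subseteq \SkpO{1}{p}{\Elements},
\end{equation*}
which depends only on $\ElementsB$, $\ElementsD$, and $L$.

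The dimension bound is immediate from Theorem \ref{Coarsening_op_multi} and the estimate $\diam[\Elements]{\ElementsB}/\delta \leq 2\CAdm L$:
\begin{equation*}
\dim V_{\ElementsB,\ElementsD,L} \leq \rank{\CoarseningOp{\ElementsB}{\delta,L}} \cleq \( L + \frac{\diam[\Elements]{\ElementsB}}{\delta} \)^{d\CCard+1} \cleq L^{d\CCard+1}.
\end{equation*}
For the error bound, pick $f \in \Lp{2}{\Omega}$ with $\supp[\Elements]{f} \subseteq \ElementsD$ and set $u := \SolutionOp{\Elements} f$. Since $\inflateN{\ElementsB}{\delta L}$ is disjoint from $\ElementsD$, the mapping property of $\SolutionOp{\Elements}$ in Lemma \ref{Space_SkpOHarm_props} yields $u \in \SkpOHarm{1}{p}{\Elements}{\inflateN{\ElementsB}{\delta L}}$, so $\CoarseningOp{\ElementsB}{\delta,L} u$ is a legitimate element of $V_{\ElementsB,\ElementsD,L}$. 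Chaining the approximation estimate of Theorem \ref{Coarsening_op_multi} with the standard a priori bound $\norm[\Lp{2}{\Omega}]{\SolutionOp{\Elements} f} \cleq \norm[\Hk{1}{\Omega}]{\SolutionOp{\Elements} f} \cleq \norm[\Lp{2}{\Omega}]{f}$ (a direct consequence of Lemma \ref{Bilinear_form_props}), and finally using $\supp[\Elements]{f} \subseteq \ElementsD$ to identify $\norm[\Lp{2}{\Omega}]{f} = \norm[\Lp{2}{\ElementsD}]{f}$, one gets
\begin{equation*}
\inf_{v \in V_{\ElementsB,\ElementsD,L}} \norm[\Lp{2}{\ElementsB}]{u-v} \leq \norm[\Lp{2}{\ElementsB}]{u - \CoarseningOp{\ElementsB}{\delta,L} u} \leq 2^{-L} \norm[\Lp{2}{\inflateN{\ElementsB}{\delta L}}]{u} \cleq 2^{-L} \norm[\Lp{2}{\ElementsD}]{f}.
\end{equation*}

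Since all the heavy lifting has already been done in Sections \ref{SSec_Disc_cutoff_op}--\ref{SSec_Coarse_ops}, there is no substantial obstacle left. The only nontrivial aspect is the simultaneous balancing in the choice of $\delta$: it must be small enough that $\inflateN{\ElementsB}{\delta L}$ stays clear of $\ElementsD$ and that $\delta \cleq 1$, while being large enough that $\diam[\Elements]{\ElementsB}/\delta \cleq L$, which is what keeps the rank at $L^{d\CCard+1}$ instead of blowing up. Admissibility together with $\dist[\Elements]{\ElementsB}{\ElementsD} \leq \diam{\Omega}$ is precisely what makes $\delta \ceq \dist[\Elements]{\ElementsB}{\ElementsD}/L$ meet all three constraints at once.
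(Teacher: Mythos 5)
Your proof is correct and takes essentially the same route as the paper: define $V_{\ElementsB,\ElementsD,L}$ as the range of the multi-step coarsening operator $\CoarseningOp{\ElementsB}{\delta,L}$, use the admissibility condition to control $\diam[\Elements]{\ElementsB}/\delta \cleq L$, and invoke \cref{Space_SkpOHarm_props} together with the a priori bound on $\SolutionOp{\Elements}$. The only (immaterial) difference is the choice of $\delta$: you scale it to $\dist[\Elements]{\ElementsB}{\ElementsD}/(2L)$, which makes the disjointness of $\inflateN{\ElementsB}{\delta L}$ and $\ElementsD$ trivial and defers admissibility to the rank bound, whereas the paper scales $\delta$ to $\diam[\Elements]{\ElementsB}/(2\CAdm L)$, which makes the rank bound trivial and invokes admissibility (via a triangle-inequality argument) for the disjointness.
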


\begin{proof}
Let $\ElementsB,\ElementsD \subseteq \Elements$ with $0 < \diam[\Elements]{\ElementsB} \leq \CAdm \dist[\Elements]{\ElementsB}{\ElementsD}$. For every given $L \in \N$, we make the choice $\delta := \diam[\Elements]{\ElementsB}/(2\CAdm L) > 0$ and use the space
\begin{equation*}
V_{\ElementsB,\ElementsD,L} := \ran{\CoarseningOp{\ElementsB}{\delta,L}} \subseteq \SkpO{1}{p}{\Elements}.
\end{equation*}
Here, $\fDef{\CoarseningOp{\ElementsB}{\delta,L}}{\SkpOHarm{1}{p}{\Elements}{\inflateN{\ElementsB}{\delta L}}}{\SkpOHarm{1}{p}{\Elements}{\ElementsB}}$ is the multi-step coarsening operator from \cref{Coarsening_op_multi}.

Using \cref{Coarsening_op_multi} and the definition of $\delta$, we can bound the dimension by
\begin{equation*}
\dim{V_{\ElementsB,\ElementsD,L}} = \rank{\CoarseningOp{\ElementsB}{\delta,L}} \cleq \( L + \frac{\diam[\Elements]{\ElementsB}}{\delta} \)^{d\CCard+1} \cleq L^{d\CCard+1}.
\end{equation*}

Finally, let $f \in \Lp{2}{\Omega}$ with $\supp[\Elements]{f} \subseteq \ElementsD$. By definition of $\inflateN{\ElementsB}{\delta L}$ and $\dist[\Elements]{\inflateN{\ElementsB}{\delta L}}{\ElementsD}$, there exist elements $B \in \ElementsB$, $C \in \inflateN{\ElementsB}{\delta L}$, $D \in \ElementsD$ such that $\dist[\Elements]{B}{C} \leq \delta L$ and $\dist[\Elements]{\inflateN{\ElementsB}{\delta L}}{\ElementsD} = \dist[\Elements]{C}{D}$. Using the triangle inequality of the mesh metric $\dist[\Elements]{\cdot}{\cdot}$, we conclude $\dist[\Elements]{\ElementsB}{\ElementsD} \leq \dist[\Elements]{B}{D} \leq \dist[\Elements]{B}{C} + \dist[\Elements]{C}{D} \leq \delta L + \dist[\Elements]{\inflateN{\ElementsB}{\delta L}}{\ElementsD}$. Now, exploiting the definition of $\delta$ and the assumptions on $\ElementsB,\ElementsD$, we obtain
\begin{equation*}
\dist[\Elements]{\inflateN{\ElementsB}{\delta L}}{\ElementsD} \geq \dist[\Elements]{\ElementsB}{\ElementsD} - \delta L = \dist[\Elements]{\ElementsB}{\ElementsD} - \frac{\diam[\Elements]{\ElementsB}}{2\CAdm} \geq \frac{\diam[\Elements]{\ElementsB}}{2\CAdm} > 0.
\end{equation*}

Then, \cref{Space_SkpOHarm_props} implies $\SolutionOp{\Elements} f \in \SkpOHarm{1}{p}{\Elements}{\inflateN{\ElementsB}{\delta L}}$ and ultimately
\begin{equation*}
\inf_{v \in V_{\ElementsB,\ElementsD,L}} \norm[\Lp{2}{\ElementsB}]{\SolutionOp{\Elements} f - v} \leq \norm[\Lp{2}{\ElementsB}]{\SolutionOp{\Elements} f - \CoarseningOp{\ElementsB}{\delta,L}(\SolutionOp{\Elements} f)} \stackrel{\cref{Coarsening_op_multi}}{\leq} 2^{-L} \norm[\Lp{2}{\inflateN{\ElementsB}{\delta L}}]{\SolutionOp{\Elements} f} \stackrel{\cref{Sol_op_disc}}{\cleq} 2^{-L} \norm[\Lp{2}{\ElementsD}]{f}.
\end{equation*}
\end{proof}

We close this section with the proof of \cref{System_matrix_HMatrix_approx}.

\begin{proof}
Let $\mvemph{A} \in \R^{N \times N}$ be the matrix from \cref{System_matrix} and $r \in \N$ a given block rank bound. We define the asserted $\mathcal{H}$-matrix approximant $\mvemph{B} \in \R^{N \times N}$ to $\mvemph{A}^{-1}$ in a block-wise fashion:

First, for every admissible block $(I,J) \in \BPartAdm$, we denote the corresponding index patches by $\ElementsB := \Patch{\Elements}{I} \subseteq \Elements$ and $\ElementsD := \Patch{\Elements}{J} \subseteq \Elements$. From \cref{Block_partition} we know that $0 < \diam[\Elements]{\ElementsB} \leq \CAdm \dist[\Elements]{\ElementsB}{\ElementsD}$. Furthermore, let $C>0$ be the constant from the dimension bound in \cref{Space_VBDL}. We set $\CExp := (1/C)^{1/(d\CCard+1)} \ln(2) > 0$ and $L := \floor{(r/C)^{1/(d\CCard+1)}} \in \N$. Then, \cref{Space_VBDL} provides a subspace $V_{\ElementsB,\ElementsD,L} \subseteq \SkpO{1}{p}{\Elements} \subseteq \Lp{2}{\Omega}$. We apply \cref{System_matrix_block_approx} to the subspace $V_{\ElementsB,\ElementsD,L} \subseteq \Lp{2}{\Omega}$ and get matrices $\mvemph{X}_{I,J}^r \in \R^{I \times \tilde{r}}$ and $\mvemph{Y}_{I,J}^r \in \R^{J \times \tilde{r}}$ of size $\tilde{r} \leq \dim{V_{\ElementsB,\ElementsD,L}}$. We set
\begin{equation*}
\restrictN{\mvemph{B}}{I \times J} := \mvemph{X}_{I,J}^r (\mvemph{Y}_{I,J}^r)^T.
\end{equation*}

Second, for every small block $(I,J) \in \BPartSmall$, we make the trivial choice
\begin{equation*}
\restrictN{\mvemph{B}}{I \times J} := \restrictN{\mvemph{A}^{-1}}{I \times J}.
\end{equation*}

By \cref{H_matrices}, we have $\mvemph{B} \in \HMatrices{\BPart}{\tilde{r}}$ with a block rank bound
\begin{equation*}
\tilde{r} \leq \dim{V_{\ElementsB,\ElementsD,L}} \stackrel{\cref{Space_VBDL}}{\leq} C L^{d\CCard+1} \leq r.
\end{equation*}

For the error we get
\begin{eqnarray*}
\norm[2]{\mvemph{A}^{-1} - \mvemph{B}} &\stackrel{\cref{Block_partition_matrix_norm}}{\leq}& N^2 \* \max_{(I,J) \in \BPartAdm} \norm[2]{\restrictN{\mvemph{A}^{-1}}{I \times J} - \mvemph{X}_{I,J}^r (\mvemph{Y}_{I,J}^r)^T} \\
&\stackrel{\cref{System_matrix_block_approx}}{\leq}& N^2 \norm{\Lambda}^2 \* \max_{\substack{\ElementsB, \ElementsD \subseteq \Elements \\ \text{admissible}}} \sup_{\substack{f \in \Lp{2}{\Omega}: \\ \supp[\Elements]{f} \subseteq \ElementsD}} \norm[\Lp{2}{\ElementsD}]{f}^{-1} \* \inf_{v \in V_{\ElementsB,\ElementsD,L}} \norm[\Lp{2}{\ElementsB}]{\SolutionOp{\Elements} f - v} \\
&\stackrel{\cref{Space_VBDL}}{\cleq}& N^2 \norm{\Lambda}^2 2^{-L} \\
&\cleq& N^2 \norm{\Lambda}^2 \exp(-\CExp r^{1/(d\CCard + 1)}).
\end{eqnarray*}

Finally, it only remains to bound the norm of $\Lambda$:
\begin{equation*}
\norm{\Lambda}^2 \stackrel{\cref{Basis_fcts}}{\cleq} \hMin{\Elements}^{-d} \stackrel{\cref{Mesh_loc_bd_card}}{\cleq} \h{\Elements}^{-d\CCard} \stackrel{\cref{Mesh_card}}{\cleq} \cardN{\Elements}^{\CCard} \ceq (\dimN{\SkpO{1}{p}{\Elements}})^{\CCard} = N^{\CCard}.
\end{equation*}

This concludes the proof of the main result, \cref{System_matrix_HMatrix_approx}.
\end{proof}

\section{Numerical results} \label{Sec_Numerical_results}

In this subsection, we illustrate the validity of \cref{System_matrix_HMatrix_approx} by means of a numerical example:

For the geometry we choose the \emph{L-shaped domain} $\Omega := ((0,1) \times (0,1)) \backslash ([1/2,1] \times [1/2,1]) \subseteq \R^2$ in two space dimensions. The PDE coefficients for the model problem from \cref{SSec_Model_problem} are given by $a_1(x) = (\begin{smallmatrix} 10 & -1 \\ -1 & 1 \end{smallmatrix})$, $a_2(x) := (\begin{smallmatrix} 10 x_2 \\ 0 \end{smallmatrix})$ and $a_3(x) := 1$. The mesh $\Elements$ is \emph{graded} in the sense of \cref{Mesh_graded} towards $\Gamma := \set{(1/2,1/2)}$ with exponent $\alpha := 5$ and the coarse mesh width $H := 0.0095$. We use the spline space $\SkpO{1}{1}{\Elements}$ ($p=1$, globally continuous, piecewise linear) and the well-known basis of \emph{hat-functions} $\set{\phi_1,\dots,\phi_N} \subseteq \SkpO{1}{1}{\Elements}$. The block partition $\BPart$ is constructed from a \emph{geometrically balanced cluster tree $\Tree{N}$} as suggested in \cite{Grasedyck_Clustering}. We choose the parameters $\CAdm := 2$ and $\CSmall := 25$ (cf. \cref{Block_partition}). For the rank bound we choose the range $r \in \set{1,\dots,50}$.

Unfortunately, the $\mathcal{H}$-matrix approximant $\mvemph{B} \in \R^{N \times N}$ from our proof is only a theoretical tool and inaccessible for an implementation in a computer system. Hence, we revert to a \emph{block-wise singular values decomposition}: First, we compute the exact inverse $\mvemph{A}^{-1} \in \R^{N \times N}$ explicitly. Then, for every admissible block $(I,J) \in \BPartAdm$, we perform the singular values decomposition $\restrictN{\mvemph{A}^{-1}}{I \times J} = \mvemph{U} \mvemph{\Sigma} \mvemph{V}^T \in \R^{I \times J}$. Here, $\mvemph{U} \in \R^{I \times I}, \mvemph{V} \in \R^{J \times J}$ are orthogonal and $\mvemph{\Sigma} = \diag{\sigma_1,\dots,\sigma_{\min\set{\cardN{I},\cardN{J}}}} \in \R^{I \times J}$ contains the corresponding singular values $\sigma_1 \geq \dots \geq \sigma_{\min\set{\cardN{I},\cardN{J}}} \geq 0$. Now, for the approximant we use $\restrictN{\mvemph{B}}{I \times J} := \mvemph{U}_r \mvemph{\Sigma}_r \mvemph{V}_r^T \in \R^{I \times J}$, where $\mvemph{U}_r \in \R^{I \times r}$, $\mvemph{\Sigma}_r \in \R^{r \times r}$ and $\mvemph{V}_r \in \R^{J \times r}$ are the first $r$ columns of $\mvemph{U}$, $\mvemph{\Sigma}$ and $\mvemph{V}$, respectively. 
Recall from the theory of singular values decompositions (e.g., \cite{Hackbusch_Hierarchical_matrices}) that
\begin{equation*}
\norm[2]{\restrictN{\mvemph{A}^{-1}}{I \times J} - \restrictN{\mvemph{B}}{I \times J}} = \min_{\substack{\mvemph{C} \in \R^{I \times J}: \\ \rank{\mvemph{C}} \leq r}} \norm[2]{\restrictN{\mvemph{A}^{-1}}{I \times J} - \mvemph{C}} = \sigma_{r+1}.
\end{equation*}

In particular, we end up with the following \emph{computable} error bound (cf. \cite[Lemma 6.5.8]{Hackbusch_Hierarchical_matrices})
\begin{equation*}
\norm[2]{\mvemph{A}^{-1} - \mvemph{B}} \cleq \depth{\Tree{N \times N}} \* \max_{(I,J) \in \BPart} \norm[2]{\restrictN{\mvemph{A}^{-1}}{I \times J} - \restrictN{\mvemph{B}}{I \times J}} = \depth{\Tree{N \times N}} \* \max_{(I,J) \in \BPart} \sigma_{r+1}(\restrictN{\mvemph{A}^{-1}}{I \times J}).
\end{equation*}

The numerical example is implemented in MATLAB. For the inversion of the full matrix $\mvemph{A} \in \R^{N \times N}$ we use MATLAB's built-in procedure \texttt{inv(\dots)}. For the singular values decompositions we use \texttt{svds(\dots)}. Recall that an exact matrix inversion needs $\Landau{N^2}$ memory and $\Landau{N^3}$ time to compute, which effectively restricts the maximal feasible problem size to $N \approx 70.000$ on our machine.

\begin{figure}[H]
\begin{center}
\includegraphics[height=6cm, trim=0cm 0cm 0cm 0cm]{./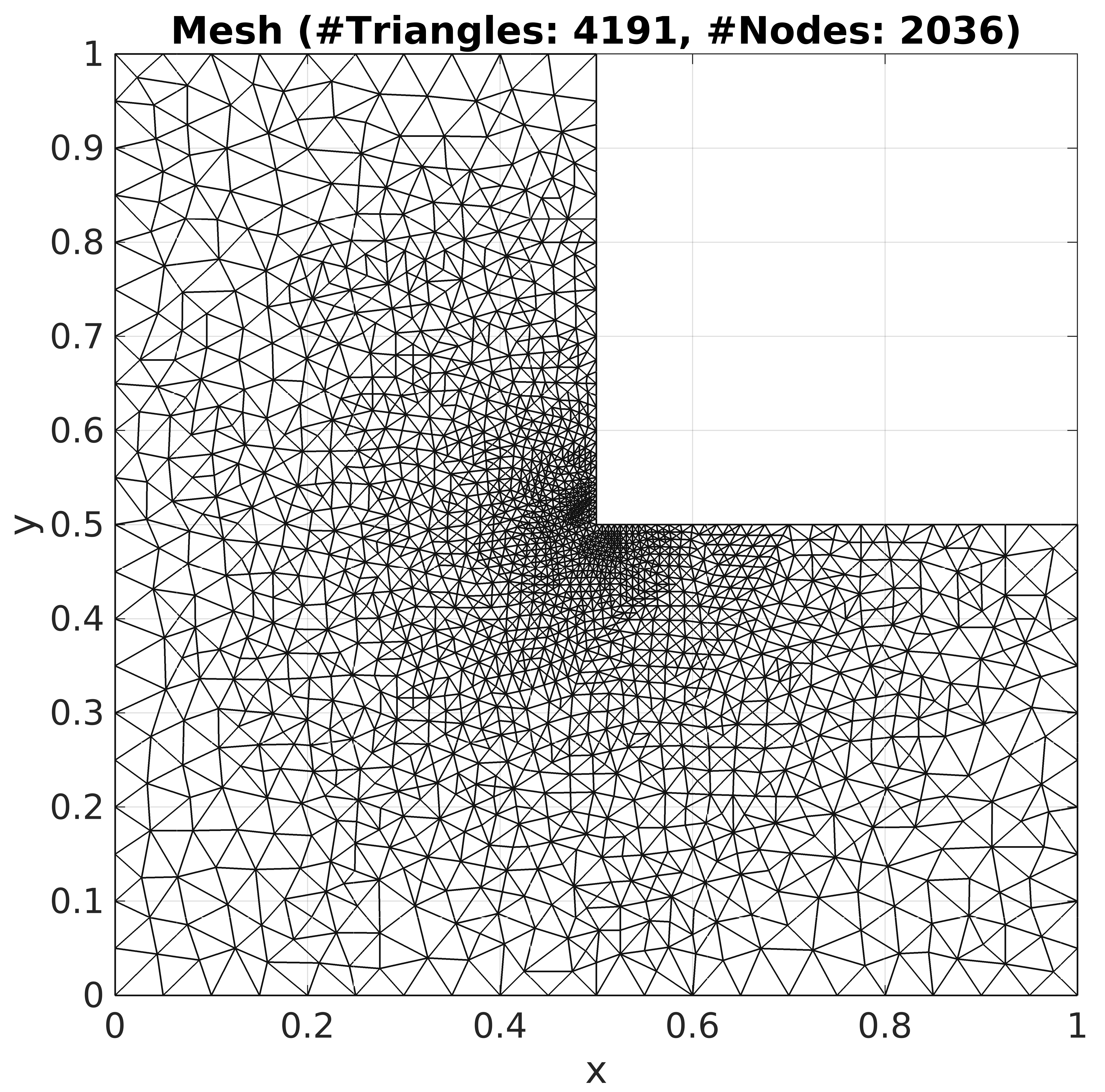}
\includegraphics[height=6cm, trim=0cm 0cm 0cm 0cm]{./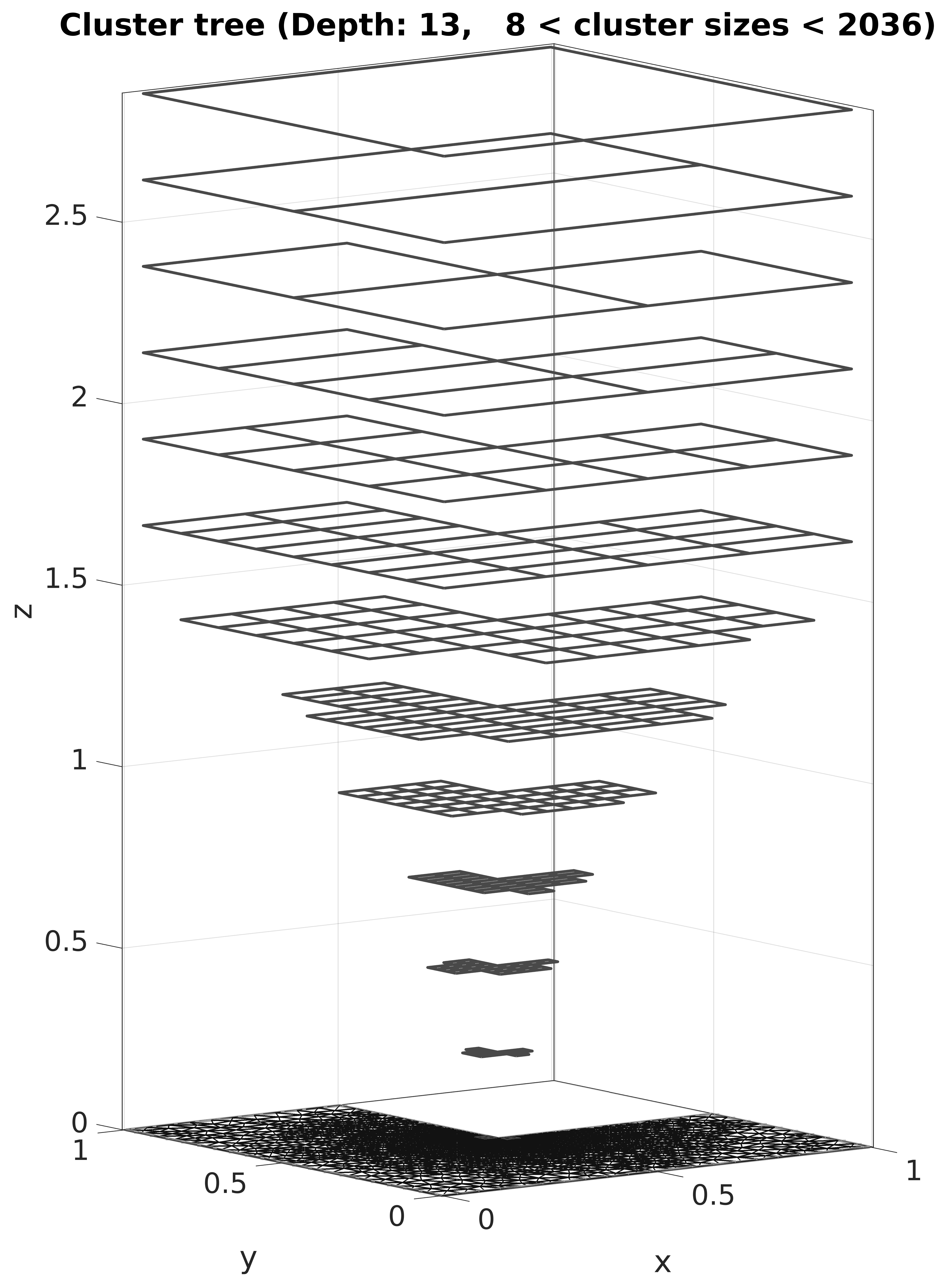}
\includegraphics[height=6cm, trim=0cm 0cm 0cm 0cm]{./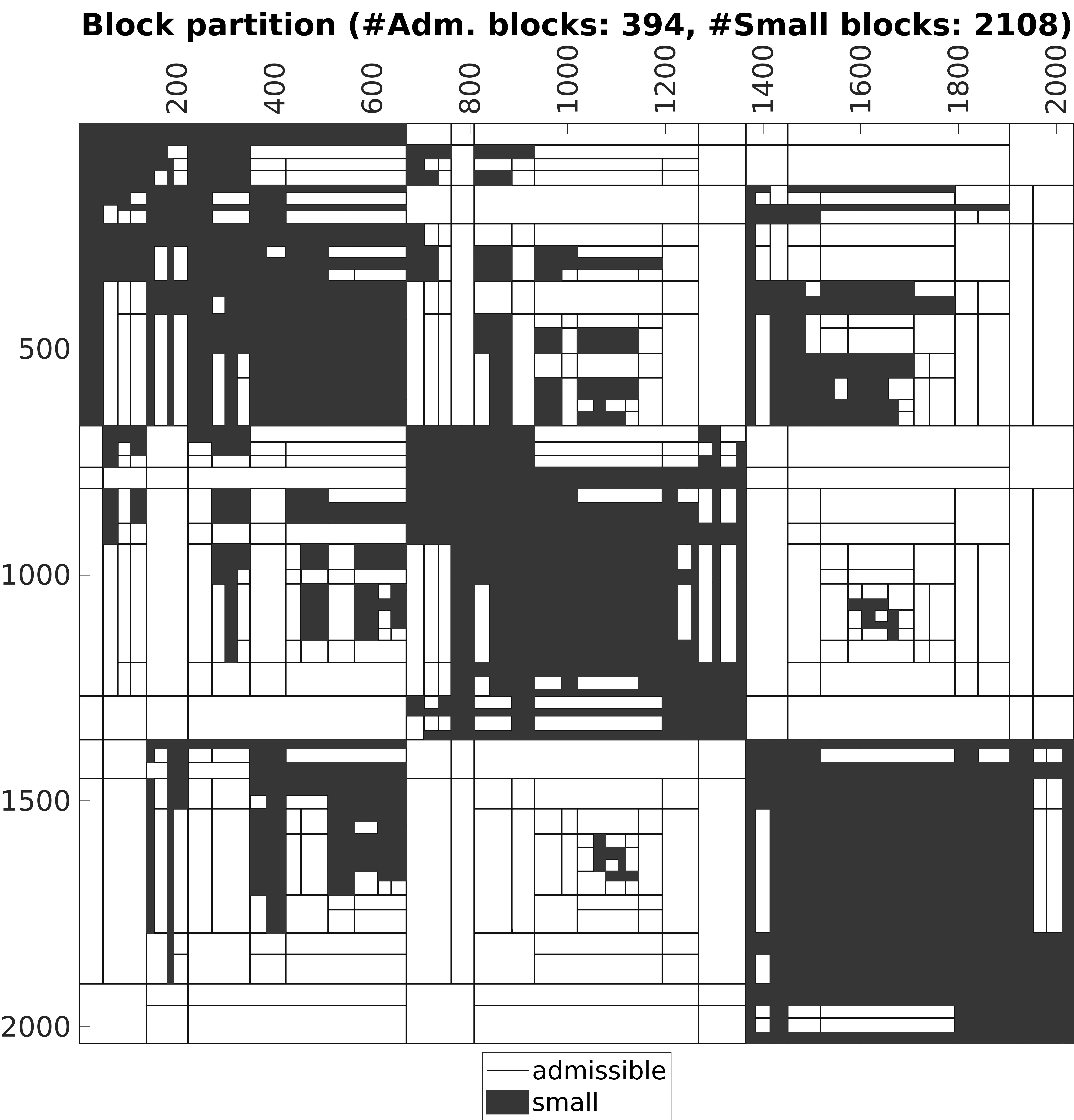}
\caption{The mesh $\Elements$, the cluster tree $\Tree{N}$ and the block partition $\BPart$ for $N \approx 2.000$ degrees of freedom.}
\label{Figure_1}
\end{center}
\end{figure}

In \cref{Figure_1}, we chose $N \approx 2.000$ degrees of freedom. The elements are graded towards the reentrant corner with a grading exponent $\alpha = 5$. The cluster tree $\Tree{N}$ is clearly deeper near the grading center. The block partition $\BPart$ uses sorted indices internally. Only a few admissible blocks are far away from the diagonal, lots of small blocks agglomerate along the diagonal. The sparsity pattern becomes more pronounced as $N \rightarrow \infty$.

\begin{figure}[H]
\begin{center}
\includegraphics[height=6cm, trim=0cm 0cm 0cm 0cm]{./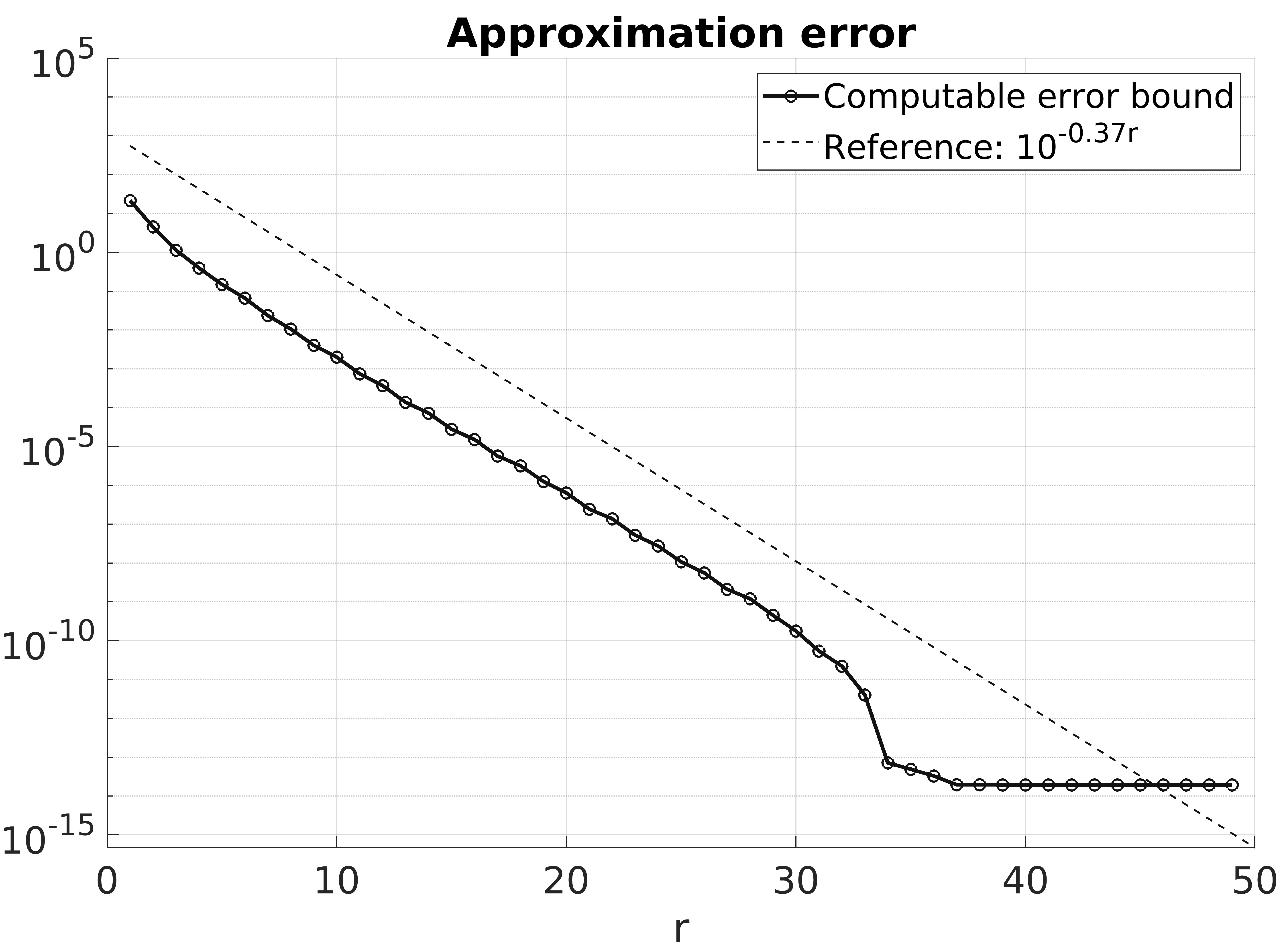}
\includegraphics[height=6cm, trim=0cm 0cm 0cm 0cm]{./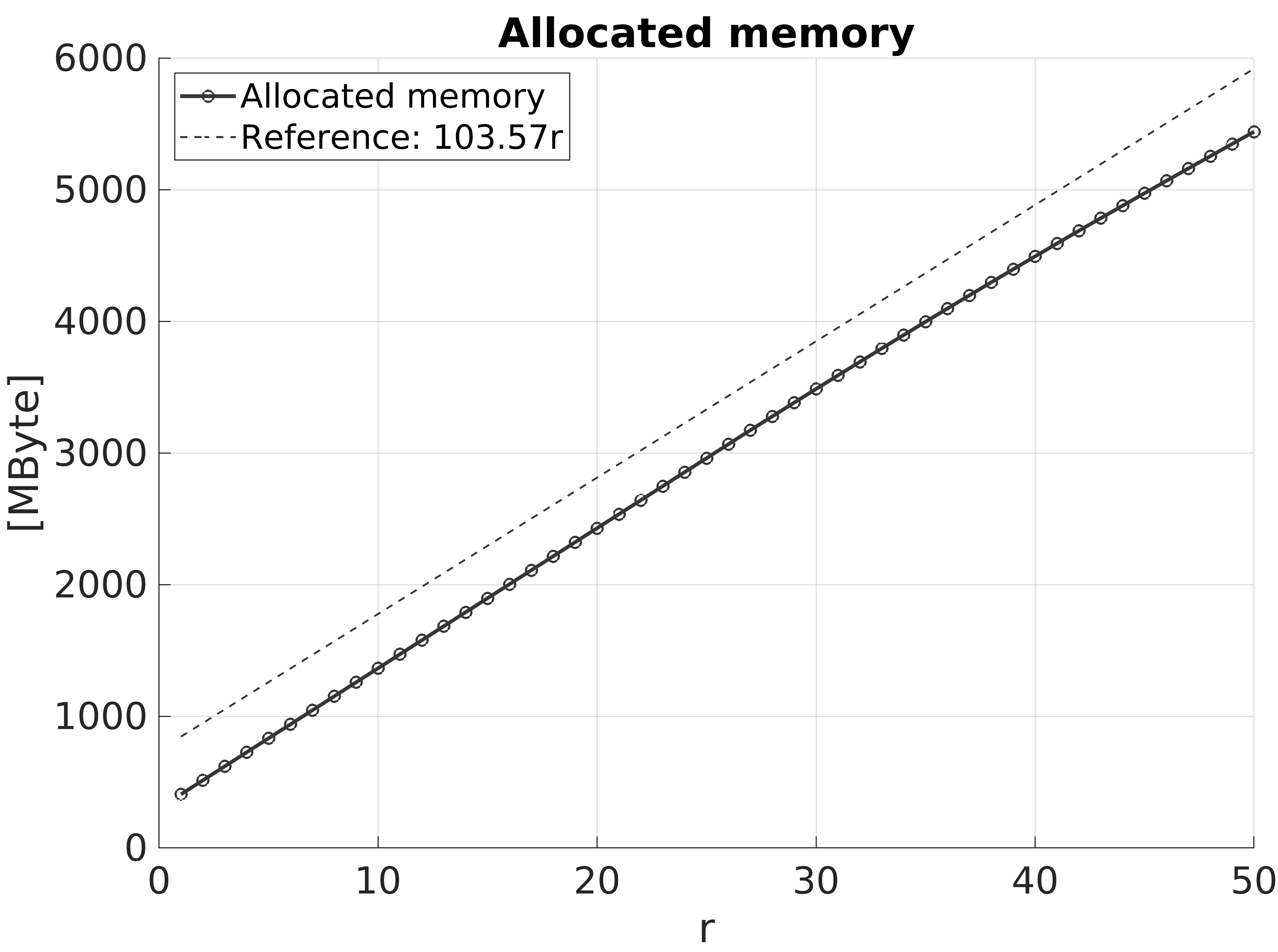}
\caption{Approximation error and memory allocation for $N \approx 72.000$ degrees of freedom.}
\label{Figure_2}
\end{center}
\end{figure}

In \cref{Figure_2}, we chose $N \approx 72.000$ degrees of freedom. The computable error bound from above (for $r \in \set{1,\dots,50}$) is depicted on a linear abscissa and a logarithmic ordinate. The values are below a straight line with slope $-0.37$ indicating an \emph{exponential decay} $\mathrm{error}(r) \cleq 10^{-0.37 r}$. This is even better than the asserted bound from \cref{System_matrix_HMatrix_approx}. The allocated memory in MBytes is plotted on a linear abscissa and a linear ordinate. The values are below a straight line with slope $103.57$ indicating a \emph{polynomial growth} $\mathrm{memory}(r) \cleq r$. Choosing a rank bound $r = 37$, for example, gives an approximation error $\approx 10^{-14}$ and uses $\approx 4.2$ GByte memory. The full system matrix takes $\approx 41.4$ GByte memory.

\bibliographystyle{amsalpha}
\bibliography{References}

\end{document}